\documentclass[12pt,amscd]{amsart}
\footskip =0.7cm
\textwidth=15.3cm
\textheight=21cm
\oddsidemargin=0.5cm
\evensidemargin=0.5cm
\pagestyle{plain}

\usepackage[all]{xy}
\usepackage{graphicx}
\usepackage{amsmath,amsxtra,amssymb,latexsym, amscd,amsthm}
\usepackage{indentfirst}
\usepackage[mathscr]{eucal}
\usepackage[pagebackref=true]{hyperref}


\newtheorem{thm}{Theorem}[section]
\newtheorem{cor}[thm]{Corollary}
\newtheorem{lem}[thm]{Lemma}

\theoremstyle{definition}
\newtheorem{defn}[thm]{Definition}
\newtheorem{exm}[thm]{Example}

\newtheorem{rem}[thm]{Remark}

\newtheorem{conj}{Conjecture}


\DeclareMathOperator{\NN}{\mathbb {N}}
\DeclareMathOperator{\ZZ}{\mathbb {Z}}

\DeclareMathOperator{\lk}{lk}

\DeclareMathOperator{\supp}{supp}

\DeclareMathOperator{\reg}{reg}

\def\D {\Delta}

\def\a {\mathbf a}
\def\b {\mathbf b}

\def\o {\mathbf {0}}

\def\m {\mathfrak m}
\def\F {\mathfrak F}

\def\h {\widetilde{H}}


\begin{document}

\title[Comparision between regularity of small powers of an edge ideal] {Comparision between regularity of small symbolic powers and ordinary powers of an edge ideal}

\author{Nguyen Cong Minh}
\address{Department of Mathematics, Hanoi National University of Education, 136 Xuan Thuy, Hanoi,
	Vietnam}
\email{minhnc@hnue.edu.vn}
\author{Le Dinh Nam}
\address{School of Applied Mathematics and Informatics, Hanoi University of Science and Technology, 1 Dai Co Viet Road, Hanoi, Vietnam}
\email{nam.ledinh@hust.edu.vn}
\author{Thieu Dinh Phong}
\address{Faculty of Mathematics, Vinh University, 182 Le Duan Street, Vinh city, Vietnam}
\email{thieudinhphong@gmail.com}
\author{Phan Thi Thuy}
\address{Department of Mathematics, Hanoi National University of Education, 136 Xuan Thuy, Hanoi, Vietnam}
\email{thuysp1@gmail.com}  
\author{Thanh Vu}
\email{vuqthanh@gmail.com}

\subjclass[2010]{13D02, 13D05, 13H99}
\keywords{Symbolic powers; edge ideals of graphs}

\date{}

\dedicatory{}
\commby{}
\maketitle
\begin{abstract}
    Let $G$ be a simple graph and $I$ its edge ideal. We prove that 
    $$\reg(I^{(s)}) = \reg(I^s)$$
    for $s = 2,3$, where $I^{(s)}$ is the $s$-th symbolic power of $I$. As a consequence, we prove the following bounds
    \begin{align*}
        \reg I^{s} & \le \reg I + 2s - 2, \text{ for } s = 2,3,\\
        \reg I^{(s)} & \le \reg I + 2s - 2, \text{ for } s = 2,3,4.
    \end{align*}
\end{abstract}

\maketitle

\section{Introduction}\label{sect_intro}
The Castelnuovo-Mumford regularity (or regularity for short) is an important invariant of graded algebras. It bounds the maximum degree of the syzygies and the maximum non-vanishing degree of the local cohomology modules. It is a celebrated result that the regularity of $I^s$ is asymptotically a linear function for any homogeneous ideal $I$ in a polynomial ring $S$ over a field (see \cite{CHT,K}). It is a natural question then to ask whether a similar result holds for symbolic powers of $I$. In general, Cutkosky \cite{Cu} gave an example of a homogeneous ideal $I$ such that $\lim_{t\longrightarrow \infty}\dfrac{\reg(I^{(t)})}{t}$ is not rational. So $\reg(I^{(t)})$ is in general far from being asymptotically a linear function. For a monomial ideal Herzog, Hoa, and N. V. Trung \cite{HHT} showed that the regularity of symbolic powers is bounded by a linear function. In recent work, Dung, Hien, Nguyen, and T. N. Trung \cite{DHNT} have constructed a class of squarefree monomial ideals for which $\reg(I^{(t)})$ is not asymptotically a linear function. On the other hand, when $I$ is a Stanley-Reisner ideal of a matroid or a simplicial complex of dimension one then $\reg(I^{(t)})$  is a linear function of $t$ (see \cite{HTr,MTr}). It is not known whether the regularity of symbolic powers of edge ideals of graphs is asymptotically a linear function. More exactly, in this case, the first author raised the following conjecture.

\begin{conj}\label{conj_A}  Let $I(G)$ be the edge ideal of a simple graph $G$. Then for all $s \ge 1$, $$\reg(I(G)^{(s)})=\reg(I(G)^s).$$
\end{conj}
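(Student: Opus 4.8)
The plan is to prove the two inequalities $\reg(S/I^{(s)}) \le \reg(S/I^s)$ and $\reg(S/I^s) \le \reg(S/I^{(s)})$ simultaneously, for all $s$, by controlling the \emph{defect module} $M_s := I^{(s)}/I^s$, which measures the failure of the symbolic and ordinary powers to coincide (recall $\reg(I^s) = \reg(S/I^s)+1$, so the conjecture is equivalent to $\reg(S/I^{(s)}) = \reg(S/I^s)$). Since $I^s \subseteq I^{(s)}$ always holds, there is a short exact sequence
$$0 \to M_s \to S/I^s \to S/I^{(s)} \to 0,$$
and the standard regularity estimates for a short exact sequence yield
$$\reg(S/I^{(s)}) \le \max\{\reg(S/I^s),\, \reg(M_s)-1\}, \qquad \reg(S/I^s) \le \max\{\reg(S/I^{(s)}),\, \reg(M_s)\}.$$
Hence the conjectured equality follows at once if one can show that the defect is \emph{regularity-negligible}, namely
$$\reg(M_s) \le \min\{\reg(S/I^s),\, \reg(S/I^{(s)})\}.$$
In this way the whole problem is reduced to a single regularity bound on $M_s$.

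The second step is to analyze $M_s$ combinatorially and inductively. When $G$ is bipartite its edge ideal is normally torsion-free, so $I^{(s)} = I^s$, $M_s = 0$, and there is nothing to prove; thus the entire difficulty is concentrated on the odd holes of $G$. I would induct on $s$, taking the cases $s=2,3$ of the present paper as the base, and combine this with the good behaviour of symbolic powers under localization and colon operations. Localization is available because $(I(G)^{(s)})_{\mathfrak{p}} = I(G_{\mathfrak{p}})^{(s)}$, where $G_{\mathfrak{p}}$ is the corresponding induced subgraph; and for a suitably chosen vertex $x$ or edge $e$ the colon $(I^{(s)}:e)$ should be expressible through $I^{(s-1)}$ and symbolic powers of smaller subgraphs, the symbolic analogue of Banerjee's even-connection machinery. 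Feeding these descriptions into Banerjee-type short exact sequences that peel off one generator at a time would let me estimate $\reg(M_s)$ in terms of $\reg(M_{s-1})$ and of regularities attached to the odd holes of $G$, both of which are already controlled by induction.

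The main obstacle is precisely this uniform bound on $\reg(M_s)$. For $s=2,3$ the defect is supported on small, essentially isolated odd-hole configurations (products of the variables around triangles and pentagons), and one can compute directly; but for general $s$ the module $M_s$ encodes arbitrarily intricate overlaps of many odd holes, and no closed form — nor even a clean filtration — for it is known. The crux is therefore to construct a filtration of $M_s$ whose successive quotients are modules attached to sub-configurations of odd holes, each of regularity at most $\reg(S/I^s)$ (conjecturally $2s+\reg(I)-3$), uniformly over all graphs. Equivalently, one must rule out any \emph{resonance} in which the interaction of several odd holes pushes a syzygy of $M_s$ into degree $\reg(S/I^s)+1$ or higher, which would break the negligibility condition. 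Controlling this combinatorial interaction — rather than executing any single homological computation — is where I expect the genuine difficulty to lie, and where there remains a real risk that the phenomenon is subtler than the evidence for $s \le 3$ suggests.
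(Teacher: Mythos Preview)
Conjecture~\ref{conj_A} is stated in the paper as an \emph{open conjecture}; the paper proves it only for $s=2,3$ (Theorem~\ref{main_thm}), so there is no proof for general $s$ to compare against, and what you have written is a research programme rather than a proof --- as you yourself acknowledge in your final paragraph.

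The gaps are more serious than you indicate. First, the key bound $\reg(M_s)\le\reg(S/I^{(s)})$ that you need for the inequality $\reg(S/I^s)\le\reg(S/I^{(s)})$ is circular: it involves the very quantity you are trying to control, so unless you can bound $\reg(M_s)$ purely in terms of combinatorial data of $G$ (and not in terms of either regularity), the reduction buys nothing. Second, the inductive tools you invoke are not available: there is no symbolic-power analogue of Banerjee's even-connection description of $(I^s:e)$ in the literature, and no known filtration of $M_s$ by modules attached to individual odd-hole configurations. The paper's own obstruction is instructive here: the example following Lemma~\ref{Key3} shows that already for $s=4$ the difference between $\sqrt{I^{(s)}:x^\a}$ and $\sqrt{I^s:x^\a}$ contains a genuine quadratic generator $x_5x_7$, not merely variables --- precisely the ``resonance'' you fear, and it appears one step beyond your proposed base case. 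For the cases $s=2,3$ that the paper does settle, the method is entirely different from yours and never touches $M_s$: it compares the degree complexes $\Delta_\a(I^s)$ and $\Delta_\a(I^{(s)})$ directly via the radicals $\sqrt{I^s:x^\a}$ and $\sqrt{I^{(s)}:x^\a}$ (Lemmas~\ref{extremal_red}, \ref{Key1}, \ref{red0}), showing that whenever these radicals differ the discrepancy is a variable outside $\supp\a$, which permits an induction on $|V(G)|$ by vertex deletion.
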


It is noted that the graph $G$ is a bipartite graph if and only if $I(G)^{(s)}=I(G)^s$ for all $s\ge 1$ (\cite[Theorem 5.9]{SVV}). Thus, the above conjecture is trivially true in this case. If $G$ is not bipartite, then it must contain an odd cycle. Gu, Ha, O'Rourke, and Skelton \cite{GHOS} took the first step in verifying this conjecture for odd cycle graphs. Then, Jayanthan and Kumar \cite{JK} proved this conjecture for graphs obtained by the clique sum of odd cycles and bipartite graphs. In recent work, Fakhari \cite{F1, F2, F3} established this conjecture for unicyclic graphs, Cameron-Walker graphs, and chordal graphs. 

In this paper, we prove 
\begin{thm}\label{main_thm}Let $I(G)$ be the edge ideal of a simple graph $G$. Then
$$\reg I(G)^{(s)} = \reg (I(G)^s)$$
for $s = 2, 3$.
\end{thm}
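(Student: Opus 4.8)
The plan is to reduce everything to a regularity bound on the quotient module $M_s := I(G)^{(s)}/I(G)^s$. From the short exact sequence
\[
0 \longrightarrow I(G)^s \longrightarrow I(G)^{(s)} \longrightarrow M_s \longrightarrow 0
\]
one gets $\reg I(G)^{(s)} \le \max\{\reg I(G)^s,\ \reg M_s\}$, so together with the inequality $\reg I(G)^{(s)} \ge \reg I(G)^s$ (known for edge ideals) it is enough to prove
\[
\reg\bigl(I(G)^{(s)}/I(G)^s\bigr)\ \le\ \reg I(G)^s \qquad (s=2,3).
\]
Since $M_s$ is a monomial module supported in the degrees of the ``extra'' minimal generators of the symbolic power, the argument has four ingredients: (a) an explicit list of those extra generators; (b) for each extra generator $u$, the colon ideal $I(G)^s:u$, together with the iterated colons needed to filter $M_s$; (c) the identification of each filtration quotient, up to a degree shift and the deletion of variables occurring linearly, with the Stanley--Reisner ring of the independence complex of a small graph $H$; and (d) the comparison of $\deg u+\reg(S'/I(H))$ with $\reg I(G)^s$.

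For $s=2$ one starts from the identity $I(G)^{(2)} = I(G)^2 + \bigl(x_ix_jx_k : \{i,j,k\}\text{ a triangle of }G\bigr)$, so $M_2$ is generated in degree $3$ by the triangle monomials. The two facts that drive (b)--(c) are: for a triangle $T=\{i,j,k\}$ one has $I(G)^2:x_ix_jx_k = I(G) + \bigl(x_\ell : \ell\in N[i]\cup N[j]\cup N[k]\bigr)$, and for two distinct triangle monomials $u\ne v$ one has $\operatorname{lcm}(u,v)\in I(G)^2$. The second fact shows the cross-colons are absorbed into the colons against $I(G)^2$, so $M_2$ admits a filtration whose successive quotients are exactly $\bigl(S/(I(G)^2:u)\bigr)(-3)$; after deleting the linear variables each of these is (a shift by $3$ of) the Stanley--Reisner ring of the independence complex of the induced subgraph $G\setminus(N[i]\cup N[j]\cup N[k])$, of regularity at most $3+(\reg I(G)-1)=\reg I(G)+2$. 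The nontrivial step is then (d): one must show this contribution does not exceed $\reg I(G)^2$, i.e., that second powers of edge ideals ``remember'' triangle neighborhoods well enough; this is the technical heart of the case $s=2$.

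For $s=3$ the same scheme is carried out, but the list in (a) is longer: besides $I(G)^3$, the extra minimal generators of $I(G)^{(3)}$ are monomials $x_ax_bx_cx_d$ coming from induced copies of $K_4$, monomials $x_ax_bx_cx_dx_e$ coming from induced odd $5$-cycles, products (triangle monomial)$\,\cdot\,$(edge monomial), and products of the six vertices of two vertex-disjoint triangles. For each such $u$ one computes $I(G)^3:u$, verifies that the cross-colons are again absorbed, and recognizes the resulting ideal, up to deletion of linear variables, as the edge ideal of an explicit induced subgraph of $G$ \emph{together with the extra edges produced by even-connection in the sense of Banerjee} (these appear because some of the $u$ are, up to extra vertices, products of two edges); the regularity of each quotient is then bounded using $\reg I(H)\le\reg I(G)$ for induced $H$ and the Banerjee-type bounds for even-connection graphs, and compared with $\reg I(G)^3$. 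This case analysis---pinning down the colon ideals, checking the cross-colons vanish, and controlling the regularity of the even-connection subgraphs for all four generator types---is the main obstacle. It is also the reason the method stops at $s=3$: for $s\ge4$ one already encounters induced $7$-cycles, odd-cycle-plus-edge hybrids, and three-disjoint-triangle generators whose colon ideals are not governed by the edge ideal of a single induced subgraph even after adjoining even-connection edges, so the reduction to manageable cyclic pieces breaks down.
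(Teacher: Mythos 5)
Your argument is circular at its core: the short exact sequence $0 \to I^s \to I^{(s)} \to M_s \to 0$ only yields the inequality $\reg I^{(s)} \le \max\{\reg I^s, \reg M_s\}$, i.e.\ (at best) the direction $\reg I^{(s)} \le \reg I^s$, and you dispose of the reverse direction by declaring $\reg I^s \le \reg I^{(s)}$ to be ``known for edge ideals.'' It is not known; no direct comparison between regularity of ordinary and symbolic powers was available (the introduction of the paper says exactly this), and proving precisely that inequality is the main content of the paper: the second half of the proof of Theorem \ref{Main1} and essentially all of Section 4 (Lemmas \ref{red1}--\ref{red4}), which run an induction on $|V(G)|$ over extremal exponents $\a$ of $I^s$, compute $\sqrt{I^3:x^\a}$ explicitly for each shape of $\a$ (divisible by a $5$-cycle, by a triangle times an edge, by a $K_4$, etc.), and use Mayer--Vietoris arguments on decompositions of the degree complexes to rule out extra homology. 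None of this is replaced by anything in your proposal.

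Even the direction your filtration does address is left open at its decisive step. The $s=2$ bookkeeping is fine (for distinct triangle monomials $\operatorname{lcm}(u,v)\in I^2$, and $I^2:x_ix_jx_k = I + (x_\ell : \ell \in N[\{i,j,k\}])$), but it only gives $\reg M_2 \le \reg I + 2$, and you then need $\reg I + 2 \le \reg I^2$ (or the finer $\reg I(G\setminus N[T]) + 3 \le \reg I^2 + 1$); no such lower bound for powers is known in general --- indeed $\reg I^s \ge \reg I + 2s-2$ is itself open --- and you explicitly defer this as ``the technical heart.'' For $s=3$ the colon computations, the cross-colon absorption for the non-squarefree generators such as $x_1^2x_2x_3x_4$ coming from $I\cdot J_1(G)$, the even-connection identifications, and the final comparison with $\reg I^3$ are all only announced, not carried out. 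Note that the paper gets the easy direction $\reg I^{(s)} \le \reg I^s$ without any such comparison: by Lemmas \ref{Key2} and \ref{Key3}, at an exponent $\a$ with $x^\a \notin I^{(s)}$ the radicals $\sqrt{I^{(s)}:x^\a}$ and $\sqrt{I^s:x^\a}$ agree up to variables outside $\supp\a$, so the degree complexes at extremal exponents coincide after deleting a vertex, and Lemmas \ref{extremal_red} and \ref{red0} plus induction finish it. As written, your proposal proves neither inequality.
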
 

In other words, we establish Conjecture \ref{conj_A}  for $s = 2, 3$. We would like to note that, in all cases where the regularity of symbolic powers of $I$ was computed, the main technical step was to bound the regularity of certain colon ideals. We do not know of any direct comparison between the regularity of powers and symbolic powers of ideals when the regularity of the corresponding symbolic/ordinary power is unknown. We will now outline the idea of proof of Theorem \ref{main_thm}. 
\begin{enumerate}
    \item We reduce the problem of comparing the regularity of two monomial ideals to the problem of comparing radicals of the colon of these ideals by certain monomials, see Lemma \ref{extremal_red} and Lemma \ref{Key1}.
    \item By Lemma \ref{red0} and induction, we further reduce to studying degree complexes of symbolic powers/ordinary powers of edge ideals of special exponents. We then analyze these degree complexes in detail via the Stanley-Reisner correspondence. 
\end{enumerate}
This procedure for comparing regularity of two monomial ideals is especially useful when the two ideals are closely related; for example, an ideal versus its integral closure, various types of powers of an ideal. Furthermore, our study of degree complexes of symbolic/ordinary powers reveals interesting information on the extremal exponents of powers of edge ideals which will be exploited further in subsequent work to study the regularity of powers of edge ideals themselves.

The main obstructions to proceed the comparison further with higher powers are:
\begin{enumerate}
    \item Explicit description of symbolic powers of higher powers is unknown.
    \item Even in the case where an explicit description of symbolic powers is known, e.g. the case of perfect graphs, the number of critical exponents grows and the radical ideals of colon ideals of powers with respect to these critical exponents are difficult to compute.
\end{enumerate} 
We would like to note further that a weaker form of Conjecture \ref{conj_A}, namely, $\reg I^{(s)} \le \reg I^s$ is easier and might be able to carry a bit further to higher powers.

By combining a recent result of Fakhari \cite[Theorem 3.6]{F4}, we establish a conjecture of Alilooee, Banerjee, Beyarslan, and Ha
 \cite[Conjecture 1]{BBH} for the second and third powers of edge ideals. 

\begin{thm} Let $I(G)$ be the edge ideal of a simple graph $G$. Then   $$\reg(I(G)^{s})\le 2s-2+\reg(I(G)),$$
for $s = 2,3$.
\end{thm}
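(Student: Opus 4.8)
The plan is to derive this bound directly from Theorem~\ref{main_thm} together with Fakhari's estimate for small symbolic powers \cite[Theorem 3.6]{F4}, so that the statement becomes a short corollary. The first step is to pass from ordinary to symbolic powers: for $s\in\{2,3\}$, Theorem~\ref{main_thm} gives $\reg(I(G)^s)=\reg(I(G)^{(s)})$, so it suffices to prove $\reg(I(G)^{(s)})\le 2s-2+\reg(I(G))$ for those $s$. It is worth noting that what is really being used here is the inequality $\reg(I(G)^s)\le\reg(I(G)^{(s)})$ --- the less obvious half of the equality, since the inclusion $I(G)^s\subseteq I(G)^{(s)}$ carries no regularity information on its own (and, as remarked in the introduction, it is the reverse inequality $\reg(I(G)^{(s)})\le\reg(I(G)^s)$ that is the easier one).

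The second step is to quote Fakhari's result \cite[Theorem 3.6]{F4}, which yields the bound $\reg(I(G)^{(s)})\le 2s-2+\reg(I(G))$ for the small symbolic powers (it covers $s$ up to $4$); in particular, for $s=2$ and $s=3$ it gives $\reg(I(G)^{(2)})\le\reg(I(G))+2$ and $\reg(I(G)^{(3)})\le\reg(I(G))+4$. Combining these with the two equalities furnished by Theorem~\ref{main_thm} gives $\reg(I(G)^{2})\le\reg(I(G))+2$ and $\reg(I(G)^{3})\le\reg(I(G))+4$, which is exactly the asserted bound, i.e.\ the conjecture of Alilooee--Banerjee--Beyarslan--Ha \cite[Conjecture 1]{BBH} for $s=2,3$.

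Thus there is no genuine obstacle internal to this statement: it is a one-line consequence of the two inputs above. The real work is hidden in Theorem~\ref{main_thm}, whose proof --- as outlined in the introduction --- reduces the comparison of the monomial ideals $I(G)^s$ and $I(G)^{(s)}$ to a comparison of the radicals of their colon ideals by monomials (Lemmas~\ref{extremal_red} and \ref{Key1}), and then, via Lemma~\ref{red0} and induction, to an explicit analysis of the degree complexes of symbolic and ordinary powers of edge ideals with special exponents through the Stanley--Reisner correspondence; this last analysis is where I would expect the difficulty to concentrate.
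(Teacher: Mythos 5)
Your overall strategy is exactly the paper's: combine the equality $\reg(I^s)=\reg(I^{(s)})$ of Theorem \ref{main_thm} (and indeed only the harder direction $\reg(I^s)\le\reg(I^{(s)})$ is used) with Fakhari's result from \cite{F4}. But there is one concrete inaccuracy in how you quote the second input, and it hides the only actual step in the paper's proof. \cite[Theorem 3.6]{F4} does \emph{not} assert the unconditional bound $\reg(I^{(s)})\le \reg(I)+2s-2$; it asserts
$$\reg(I^{(s)})\le \max\{\reg(I^{(s)}+I^{s-1}),\ \reg(I)+2s-2\},$$
so one still has to control the term $\reg(I^{(s)}+I^{s-1})$. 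The paper does this by observing that $I^{(s)}\subseteq I^{s-1}$ for $s=2,3$, so that $I^{(s)}+I^{s-1}=I^{s-1}$; for $s=2$ the extra term is $\reg(I)$, and for $s=3$ it is $\reg(I^2)$, which is then bounded by $\reg(I)+2$ using the case $s=2$ just established (equivalently, by \cite{BN}). Your proposal skips this bootstrapping entirely by treating the symbolic-power bound as already contained in Theorem 3.6.

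Moreover, your parenthetical claim that this bound from \cite{F4} ``covers $s$ up to $4$'' is wrong and conflicts with the paper's own framing: the symbolic bound $\reg(I^{(4)})\le\reg(I)+6$ is Theorem \ref{Main4} of this paper, proved with genuine additional work (Lemma \ref{fourth_third} and a degree-complex comparison of $I^{(4)}+I^3$ with $I^3$), precisely because for $s=4$ one no longer has $I^{(s)}\subseteq I^{s-1}$ and the term $\reg(I^{(s)}+I^{s-1})$ must be analyzed by hand. The repair to your argument is short: either invoke \cite[Theorem 3.6]{F4} in its actual max form and supply the two-line reduction above (as the paper does), or cite the unconditional statement for $s=2,3$ in \cite{F4} (its Corollary 3.9) rather than Theorem 3.6. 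With that correction your proof coincides with the paper's.
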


Note that Banerjee and Nevo \cite{BN} prove this Theorem for $s=2$ by using a topological method. 

For symbolic powers, we extend \cite[Corollary 3.9]{F4} to prove

\begin{thm} Let $I(G)$ be the edge ideal of a simple graph $G$. Then   $$\reg(I(G)^{(s)})\le 2s-2+\reg(I(G)),$$
for $s = 2,3,4$.
\end{thm}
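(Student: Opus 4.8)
The plan is to combine the main theorem of this paper with the recent result of Fakhari that establishes the same linear bound for symbolic powers in the range where the "symbolic = ordinary powers of the second symbolic power" reduction is available. First observe that for $s = 2, 3$ the statement is immediate: by Theorem \ref{main_thm} we have $\reg(I^{(s)}) = \reg(I^s)$, and the preceding theorem (the one we may assume, obtained by feeding Theorem \ref{main_thm} into \cite[Theorem 3.6]{F4}) gives $\reg(I^s) \le 2s - 2 + \reg(I)$ for $s = 2, 3$. So only the case $s = 4$ requires genuine work, and here one cannot invoke Theorem \ref{main_thm} directly since it only covers $s \le 3$.

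For $s = 4$ the approach I would take is to exploit the containment $I^{(4)} \subseteq (I^{(2)})^2$, which holds for edge ideals because $I^{(a)} \cdot I^{(b)} \subseteq I^{(a+b)}$ always and, more to the point for an upper bound on regularity, because the fourth symbolic power can be sandwiched between powers of the second symbolic power. The key input is \cite[Corollary 3.9]{F4}, which the excerpt tells us to extend; I expect this corollary gives a bound of the shape $\reg(I^{(2t)}) \le \reg((I^{(2)})^t) + (\text{correction})$ or, more usefully, relates $\reg(I^{(4)})$ to $\reg(I^{(2)})$ and $\reg((I^{(2)})^2)$. Concretely, the plan is: (i) use the structure of $I^{(4)}$ as an intersection of symbolic-power-type ideals together with the exact sequence / colon ideal technique (Lemma \ref{extremal_red}, Lemma \ref{Key1} of this paper) to reduce $\reg(I^{(4)})$ to a comparison with $\reg((I^{(2)})^2)$ and lower symbolic powers; (ii) apply the Herzog-Hoa-Trung type linear bound for ordinary powers to $(I^{(2)})^2$, giving $\reg((I^{(2)})^2) \le \reg(I^{(2)}) + 2 \cdot (\text{something related to the degree-2 generators of } I^{(2)})$; (iii) use $\reg(I^{(2)}) = \reg(I^2) \le 2 + \reg(I)$ from Theorem \ref{main_thm} and the $s=2$ case above; (iv) assemble the constants to land at $\reg(I^{(4)}) \le 6 + \reg(I) = 2 \cdot 4 - 2 + \reg(I)$.

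The main obstacle will be step (ii): controlling $\reg((I^{(2)})^2)$ requires knowing that the "second power" operation on the ideal $I^{(2)}$ — which is generated in degrees $2$ (from $I$) and $3$ (from the triangles / odd holes) — adds at most $4$ to the regularity, rather than something larger coming from the degree-$3$ generators. This is exactly the kind of colon-ideal regularity bound the introduction flags as the recurring technical heart of all such arguments, and \cite[Corollary 3.9]{F4} together with \cite[Theorem 3.6]{F4} should be precisely the tool that handles it; the work is in extending that corollary and checking that the numerics close up. The secondary obstacle is making sure the degree-complex analysis of $I^{(4)}$ (along the lines of Lemma \ref{red0} and the Stanley-Reisner discussion in part (2) of the outline) does not introduce an extra $+1$ or $+2$ — i.e. that the reduction in step (i) is lossless in the relevant extremal degrees. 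If it is not quite lossless, one falls back on the weaker inequality $\reg I^{(s)} \le \reg I^s$ mentioned at the end of the excerpt, but for $s = 4$ the target bound $2s - 2 + \reg I = 6 + \reg I$ has enough slack that a small loss is absorbed by $\reg((I^{(2)})^2) \le \reg(I^{(2)}) + 4 \le (2 + \reg I) + 4$.
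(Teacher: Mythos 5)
Your reduction for $s=2,3$ is exactly the paper's: combine Theorem \ref{main_thm} with the ordinary-power bound obtained from \cite[Theorem 3.6]{F4}. The gap is in the case $s=4$, and it is a genuine one. Your key containment $I^{(4)} \subseteq (I^{(2)})^2$ is false: the general rule $I^{(a)}I^{(b)} \subseteq I^{(a+b)}$ gives the opposite inclusion $(I^{(2)})^2 \subseteq I^{(4)}$, and the reverse fails already for the $5$-cycle, where $x_1^2x_2^2x_3x_4x_5 \in I^{(4)}$ (every minimal vertex cover of $C_5$ picks up total exponent at least $4$) while $(I^{(2)})^2 = I^4$ is generated in degree $8$, so no degree-$7$ monomial lies in it. Even granting some containment, an inclusion of ideals yields no inequality of regularities by itself, and your step (ii), the claim $\reg((I^{(2)})^2) \le \reg(I^{(2)}) + 4$, is not a known general bound (regularity of a square can exceed the regularity of the ideal by more than twice the generating degree for general ideals, and no monomial-ideal version of such a bound is invoked or proved here). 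So the $s=4$ argument does not close.

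The paper's actual route for $s=4$ stays entirely within the colon-ideal/degree-complex machinery you cite but applies it to a different object: by \cite[Theorem 3.6]{F4}, $\reg I^{(4)} \le \max\{\reg(I^{(4)}+I^3),\, \reg I + 6\}$, so everything reduces to bounding $\reg(I^{(4)}+I^3)$. Lemma \ref{fourth_third} identifies this ideal explicitly as $I^3 + J_1(G)J_1(G) + J_3(G)$ (products of two triangles and cliques of size $5$), and then a short case analysis using Lemma \ref{radical_colon} shows $\sqrt{(I^{(4)}+I^3):x^{\mathbf a}} = \sqrt{I^3:x^{\mathbf a}}$ whenever $x^{\mathbf a} \notin I^{(4)}+I^3$; hence the degree complexes coincide, Lemma \ref{extremal_red} gives $\reg(I^{(4)}+I^3) \le \reg I^3$, and Theorem \ref{Main3} finishes with $\reg I^3 \le \reg I + 4 \le \reg I + 6$. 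If you want to salvage your outline, replace the $(I^{(2)})^2$ sandwich by this comparison of $I^{(4)}+I^3$ with $I^3$; that is where the real work (and the extension of \cite[Corollary 3.9]{F4} you anticipated) lives.
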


Finally, we obtain explicit values of the regularity of small symbolic powers of $I(G)$ for some new classes of graphs.
  
Now we explain the organization of the paper. In Section 2, we recall some notation and basic facts about the symbolic powers of a squarefree monomial ideal, the degree complexes, and Castelnuovo-Mumford regularity. In Section 3, we prove Theorem \ref{main_thm} for $s=2$. In Section 4, we prove Theorem \ref{main_thm} for $s = 3$. Finally, Section 5 contains some applications of the main results.  

\section{Castelnuovo-Mumford regularity, symbolic powers and degree complexes}
In this section, we recall some definitions and properties concerning Castelnuovo-Mumford regularity, the symbolic powers of a squarefree monomial ideal, and the degree complexes of a monomial ideal. The interested reader is referred to (\cite{BH, D, E, S}) for more details.

\subsection{Graph theory}
Throughout this paper, $G$ will denote a finite simple graph over the vertex set $V(G)=[n] = \{1,2,\ldots,n\}$ and the edge set $E(G)$. For a vertex $x\in V(G)$, let the neighbour of $x$ be the subset $N_G(x)=\{y\in V(G)~|~ \{x,y\}\in E(G)\}$, and set $N_G[x]=N_G(x)\cup\{x\}$. For a subset $U$ of the vertices set $V(G)$, $N_G(U)$ and $N_G[U]$ are defined by $N_G(U)=\cup_{u\in U}N_G(u)$ and $N_G[U]=\cup_{u\in U}N_G[u]$. If $G$ is fixed, we will use $N(U)$ or $N[U]$ for short.

A subgraph $H$ is called an induced subgraph of $G$ if for any vertices $u,v\in V(H)\subseteq V(G)$ then $\{u,v\}\in E(H)$ if and only if $\{u,v\}\in E(G)$. For a subset $U$ of the vertices set $V(G)$, we shall denote by $G[U]$ the induced subgraph of $G$ on $U$, and denote by $G - U$ the induced subgraph of $G$ on $V(G)\setminus U$.  

A $m$-cycle in $G$ is a sequence of $m$ distinct vertices $1,\ldots, m\in V(G)$ such that $\{1,2\},\ldots, \{m-1,m\}, \{m,1\}$ are edges of $G$. We shall also use $C=12\ldots m$ to denote the $m$-cycle whose sequence of vertices is $1,\ldots, m$. 

\subsection{Simplicial complex} 
Let $\Delta$ be a simplicial complex on $[n]=\{1,\ldots, n\}$ that is a collection of subsets of $[n]$ closed under taking subsets. We put $\dim F = |F|-1$, where $|F|$ is the cardinality of $F$. The dimension of $\Delta$ is $\dim \Delta = \max \{ \dim F \mid F \in \Delta \}$.  It is clear that $\Delta$ can be uniquely  determined by the set of its maximal elements under inclusion, called by facets, which is denoted by $\F(\Delta)$.

A simplicial complex $\D$ is called a cone over $x\in [n]$ if $x\in B$ for any $B\in \F(\Delta)$. If $\D$ is a cone, then it is acyclic (i.e., has vanishing reduced homology).

For a face $F\in\Delta$, the link of $F$ in $\Delta$ is the subsimplicial complex of $\Delta$ defined by
$$\lk_{\Delta}F=\{G\in\Delta \mid  F\cup G\in\Delta, F\cap G=\emptyset\}.$$

\subsection{Stanley-Reisner correspondence}
Let $S = K[x_1,\ldots,x_n]$. We now recall the Stanley-Reisner correspondence which corresponds a squarefree monomial ideal of $S$ and a simplicial complex $\Delta$ on $[n]$. For each subset $F$ of $[n]$, let $x_F=\prod_{i\in F}x_i$ be a squarefree monomial in $S$. 

\begin{defn}For a squarefree monomial ideal $I$, the Stanley-Reisner complex of $I$ is defined by
$$ \Delta(I) = \{ F \subset [n] \mid x_F \notin I\}.$$

For a simplicial complex $\Delta$, the Stanley-Reisner ideal of $\Delta$ is defined by
$$I_\Delta = (x_F \mid  F \notin \Delta).$$
The Stanley-Reisner ring of $\Delta$ is the quotient by the Stanley-Reisner ideal, $K[\Delta] =  S/I_\Delta.$
\end{defn}
From the definition, it is easy to see the following:
\begin{lem}\label{cone} Let $I, J$ be squarefree monomial ideals of  $S = K[x_1,\ldots, x_n]$. Then 
\begin{enumerate}
    \item $\Delta(I)$ is a cone over $t \in [n]$ if and only if $x_t$ is not divided by any minimal generator of $I$.
    \item $\Delta(I + J) = \Delta(I) \cap \Delta(J).$
    \item $\Delta(I \cap J) = \Delta(I) \cup \Delta(J).$
\end{enumerate}
\end{lem}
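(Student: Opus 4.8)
The plan is to reduce all three statements to a single elementary principle governing monomial ideals: for a monomial ideal $I$, a monomial $x_F$ lies in $I$ if and only if it is divisible by one of the minimal generators of $I$. Combined with the defining equivalence $F \in \Delta(I)$ if and only if $x_F \notin I$, each assertion becomes a routine manipulation of the membership condition $x_F \in I$, and the only point requiring genuine care is the combinatorial reformulation of ``cone'' in part (1).

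For part (3) I would argue first, since it is purely formal: $x_F \in I \cap J$ if and only if $x_F \in I$ and $x_F \in J$, which holds tautologically for the set-theoretic intersection of the two ideals. Negating gives $x_F \notin I \cap J$ if and only if $x_F \notin I$ or $x_F \notin J$, that is, $F \in \Delta(I) \cup \Delta(J)$; no divisibility input beyond the definitions is needed. For part (2) I would start from $F \in \Delta(I+J)$ if and only if $x_F \notin I+J$. Here I invoke the only nontrivial input: because $I+J$ is the monomial ideal whose minimal generators are drawn from those of $I$ and those of $J$, a monomial belongs to $I+J$ precisely when it is divisible by a generator of $I$ or a generator of $J$, i.e. exactly when it belongs to $I$ or to $J$. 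Hence $x_F \notin I+J$ if and only if $x_F \notin I$ and $x_F \notin J$, which is $F \in \Delta(I) \cap \Delta(J)$ by the definitions of $\Delta(I)$, $\Delta(J)$ and of the intersection of two complexes.

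For part (1) I would first record the purely combinatorial fact that a simplicial complex $\Delta$ is a cone over $t$ exactly when $F \cup \{t\} \in \Delta$ for every $F \in \Delta$: if every facet contains $t$, then any face sits inside such a facet and can be enlarged by $t$, while conversely the ability to adjoin $t$ to every face forces each facet to contain $t$ already, by maximality. Applying this to $\Delta(I)$, I must show that ``$F \cup \{t\} \in \Delta(I)$ for all $F \in \Delta(I)$'' is equivalent to $x_t$ not dividing any minimal generator of $I$. Assuming no minimal generator involves $x_t$: if $x_F \notin I$ while $x_F x_t \in I$, then some minimal generator divides $x_F x_t$, and since it cannot involve $x_t$ it must already divide $x_F$, contradicting $x_F \notin I$; hence $F \cup \{t\} \in \Delta(I)$. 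For the contrapositive, if a minimal generator $m$ is divisible by $x_t$, then $m/x_t$ is a proper divisor of a minimal generator and so lies outside $I$, whence its support $F$ satisfies $F \in \Delta(I)$ but $x_{F \cup \{t\}} = m \in I$, exhibiting $\Delta(I)$ as not a cone over $t$. I expect this last step to be the main obstacle, modest as it is, since it is the only place where minimality of the generators and the squarefree hypothesis are genuinely used; everything else follows formally from the definitions.
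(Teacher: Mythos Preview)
Your argument is correct in all three parts; the only subtlety is the converse direction of (1), and you handle it properly by using that a minimal generator $m$ with $x_t \mid m$ is squarefree, so $m/x_t$ is a monomial not in $I$ whose support witnesses the failure of the cone condition. The paper itself gives no proof of this lemma beyond the remark ``From the definition, it is easy to see the following,'' so your write-up is simply the natural unpacking of that sentence and there is nothing further to compare.
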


\subsection{Castelnuovo-Mumford regularity} 
Let $\m = (x_1,\ldots, x_n)$ be the maximal homogeneous ideal of $S = K[x_1,\ldots, x_n]$ a polynomial ring over a field $K$. For a finitely generated graded $S$-module $L$, let
$$a_i(L)=
\begin{cases}
\max\{j\in\ZZ \mid H_{\m}^i(L)_j \ne 0\} &\text{ if  $H_{\m}^i(L)\ne 0$}\\ 
-\infty &\text{ otherwise,}
\end{cases}
$$
where $H^{i}_{\m}(L)$ denotes the $i$-th local cohomology module of $L$ with respect to $\m$. Then, the Castelnuovo-Mumford regularity (or regularity for short) of $L$ is defined to be
$$\reg(L) = \max\{a_i(L) +i\mid i = 0,\ldots, \dim L\}.$$
The regularity of $L$ can also be defined via the minimal graded free resolution. Assume that the minimal graded free resolution of $L$ is
$$0\longleftarrow L\longleftarrow F_0\longleftarrow F_1\longleftarrow\cdots\longleftarrow F_p\longleftarrow 0.$$
Let $t_i(L)$ be the maximal degree of graded generators of $F_i$. Then,
$$\reg(L) = \max\{t_i(L) - i\mid i = 0, \ldots, p\}.$$

From the minimal graded free resolution of $S/J$, we obtain $\reg(J)=\reg(S/J)+1$ for a non-zero and proper homogeneous ideal $J$ of $S$.

\subsection{Symbolic powers} 
Let $I$ be a non-zero and proper homogeneous ideal of $S$. Let $\{P_1,\ldots,P_r\}$ be the set of the minimal prime ideals of $I$. Given a positive integer $s$, the $s$-th symbolic power of $I$ is defined by
$$I^{(s)}=\bigcap_{i=1}^r I^sS_{P_i}\cap S.$$

For $f \in S$ and $x^\a = x_1^{a_1} ... x_n^{a_n}$, we denote 
$\frac{\partial (f)}{\partial (x^\a)}$ the partial derivative of $f$ with respect to $x^\a$. For each $s$, we denote 
$$I^{\langle s \rangle} =  ( f \in S ~|~ \frac{\partial f }{\partial x^\a} \in I, \text{ for all } x^\a \text{ with } |\a| = s-1  ), $$
the $s$-th differential power of $J$. When $K$ is a field of characteristic $0$ and $I$ is a radical ideal, it is a well-known theorem of Nagata-Zariski that $I^{(s)} = I^{\langle s \rangle}.$

When $f$ is a monomial, we denote $\frac{\partial^* (f)}{\partial^*(x^\a)}$ the $*$-partial derivative of $f$ with respect to $x^\a$, which is derivative without coefficients. In general, $\partial f / \partial x^\a = c \partial^*(f) / \partial^*(x^\a)$ for some constant $c$. Similarly, we define 
$$I^{[s]} =  ( f \in S ~|~ \frac{\partial^* f }{\partial^* x^\a} \in I, \text{ for all } x^\a \text{ with } |\a| = s -1),$$ the $s$-th $*$-differential power of $I$. When the characteristic of $K$ is equal to $0$, $I^{\langle s \rangle} = I^{[s]}.$ In general, we only have $I^{\langle s \rangle} \subseteq I^{[s]}.$ When $I$ is a squarefree monomial ideal, we first prove that the symbolic powers of $I$ is equal to the $*$-differential powers of $I$. 

\begin{lem}\label{differential_criterion} Let $I$ be a squarefree monomial ideal. Then $I^{(s)} = I^{[s]}.$
\end{lem}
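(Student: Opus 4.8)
The plan is to prove both inclusions $I^{[s]} \subseteq I^{(s)}$ and $I^{(s)} \subseteq I^{[s]}$ by working monomial-by-monomial, exploiting the combinatorial description of the symbolic power of a squarefree monomial ideal. Recall that if $I$ has minimal primes $P_1, \dots, P_r$, where each $P_j = (x_i \mid i \in \tau_j)$ for some $\tau_j \subseteq [n]$, then a monomial $x^\b = x_1^{b_1}\cdots x_n^{b_n}$ lies in $I^{(s)}$ if and only if $\sum_{i \in \tau_j} b_i \ge s$ for every $j = 1, \dots, r$; this is because $I^s S_{P_j} \cap S$ is generated by the monomials whose $P_j$-order is at least $s$. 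Since both $I^{(s)}$ and $I^{[s]}$ are monomial ideals (the latter because the $*$-partial derivative of a monomial is a monomial, so the defining condition can be checked on monomials), it suffices to compare their monomial generators, and in fact to show that a monomial $x^\b$ satisfies $\partial^*(x^\b)/\partial^*(x^\a) \in I$ for all $\a$ with $|\a| = s-1$ if and only if $\sum_{i\in\tau_j} b_i \ge s$ for all $j$.

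For the direction $I^{(s)} \subseteq I^{[s]}$: take a monomial $x^\b \in I^{(s)}$ and any $x^\a$ with $|\a| = s-1$ and $x^\a \mid x^\b$ (if $x^\a \nmid x^\b$ the $*$-derivative is zero and there is nothing to check, though one should be slightly careful about the convention — I would adopt the convention that the relevant condition is vacuous or that $0 \in I$). Write $x^\b / x^\a = x^\c$ with $|\c| = |\b| - (s-1)$. I must show $x^\c \in I$, i.e. $x^\c$ is divisible by some $x_{\tau_j}$, equivalently $c_i \ge 1$ for all $i \in \tau_j$ for some $j$. Suppose not: then for every $j$ there is an index $i_j \in \tau_j$ with $c_{i_j} = 0$, i.e. $a_{i_j} = b_{i_j}$. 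The key combinatorial point is a counting/pigeonhole argument: since $\sum_{i \in \tau_j} b_i \ge s$ but the "mass" of $\b$ in $\tau_j$ not removed is only $b_{i_j} = a_{i_j} \le |\a| = s - 1$, actually we need to be more careful and argue that one cannot simultaneously fail for all $j$ using $\a$ of total degree only $s-1$; I would set this up by choosing, for each $j$, the variable in $\tau_j$ on which $\b$ is concentrated, and showing the total degree of $\a$ would have to be at least $s$. This is the heart of the argument and where I expect the bookkeeping to be most delicate.

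For the reverse direction $I^{[s]} \subseteq I^{(s)}$: take a monomial $x^\b \in I^{[s]}$ and fix a minimal prime $P_j$ with vertex set $\tau_j$; I must show $\sum_{i \in \tau_j} b_i \ge s$. Arguing by contradiction, suppose $\sum_{i\in\tau_j} b_i = t \le s-1$. Then I would choose $\a$ to be a divisor of $x^\b$ supported on $\tau_j$ of total degree exactly $t$ together with $s-1-t$ further units taken from outside $\tau_j$ in a way compatible with $\b$ — arranging things so that $x^\b/x^\a = x^\c$ has $c_i = 0$ for all $i \in \tau_j$, hence $x^\c \notin P_j \supseteq I$, contradicting $x^\b \in I^{[s]}$. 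The only subtlety is ensuring there is enough room in the support of $\b$ outside $\tau_j$ to absorb the remaining $s-1-t$ degrees; if not, $x^\b$ has too small total degree and one checks directly it cannot be a nonzero element satisfying the $I^{[s]}$ condition, or one reduces to a smaller case. I expect the main obstacle overall to be handling the boundary conventions (what $\partial^*$ does when $x^\a \nmid x^\b$, and degenerate low-degree monomials) cleanly, so that the pigeonhole count in the first direction goes through without gaps.
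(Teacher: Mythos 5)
Your starting point (a monomial $x^\b$ lies in $I^{(s)}=\bigcap_j P_j^s$ iff $\sum_{i\in\tau_j}b_i\ge s$ for every minimal prime $P_j$) is correct and is also what the paper uses, and your plan for the inclusion $I^{[s]}\subseteq I^{(s)}$ is essentially the paper's argument: if $\sum_{i\in\tau_j}b_i=t\le s-1$, strip all of the mass of $\b$ on $\tau_j$ and spend the remaining $s-1-t$ units off $\tau_j$. Your worry about ``enough room outside $\tau_j$'' evaporates once the convention for $\partial^*$ is fixed as $\partial^*(x^\b)/\partial^*(x^\a)=x^\b/\gcd(x^\b,x^\a)$, which is the convention the paper implicitly uses: it simply puts all the leftover degree on one variable outside $P_j$ (not necessarily dividing $x^\b$), and the resulting monomial divides the part of $x^\b$ supported off $\tau_j$, hence lies outside $P_j\supseteq I$. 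Do note that with the alternative convention ``the derivative is $0$ when $x^\a\nmid x^\b$'' the definition of $I^{[s]}$ degenerates for monomials of low degree, so this convention must be pinned down.

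The genuine gap is in the direction $I^{(s)}\subseteq I^{[s]}$. You claim that $x^\b/x^\a\in I$ is equivalent to ``$c_i\ge1$ for all $i\in\tau_j$, for some $j$,'' i.e.\ divisibility by the full product $x_{\tau_j}$ of some minimal prime. That is not the membership criterion for the radical ideal $I=\bigcap_j P_j$: a monomial lies in $I$ iff its support \emph{meets every} $\tau_j$, not iff it \emph{contains some} $\tau_j$ entirely. The stronger statement you set out to establish by pigeonhole is in fact false: for the $5$-cycle $12345$, the monomial $x_1x_2x_3x_4=(x_1x_2)(x_3x_4)$ lies in $I^2\subseteq I^{(2)}$, and dividing by $x^\a=x_1$ gives $x_2x_3x_4\in I$, yet $\{2,3,4\}$ contains no minimal vertex cover of the $5$-cycle, so $x_2x_3x_4$ is divisible by no $x_{\tau_j}$. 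Hence the ``delicate bookkeeping'' step you deferred cannot be completed as set up. With the correct criterion no pigeonhole is needed and the inclusion is immediate, which is exactly the paper's proof: for each $j$ the $\tau_j$-degree of $x^\b$ is at least $s$, and dividing by $\gcd(x^\b,x^\a)$ lowers it by at most $|\a|=s-1$, so $\partial^*(x^\b)/\partial^*(x^\a)\in P_j$ for every $j$, and therefore it lies in $\bigcap_j P_j=I$.
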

\begin{proof} This result is folkloric, though we could not find a reference so we give a simple proof here. Let $P_1, \ldots, P_r$ be the minimal prime ideals of $I$. Then note that $I^{(s)} = P_1^s \cap \cdots \cap P_r^s$. Let $f$ be a monomial in $I^{(s)}$ and $\a \in \NN^n$ such that $|\a| = s-1$. Since $f\in P_i^s$ for all $i = 1, \ldots, r$, $\partial^* f /\partial^* x^\a \in P_i$. Thus, $\partial^*f /\partial^* x^\a \in P_1 \cap \cdots \cap P_r = I$. Therefore, $f \in I^{[s]}.$

Conversely, assume by contradiction that $I^{[s]}$ strictly contains $I^{(s)}$. Let $f= x_1^{r_1} ... x_n^{r_n}$ be a minimal generator of $I^{[s]}$ of smallest degree that is not in $I^{(s)}.$ In particular, $f \notin P^s$ for some minimal prime $P$ of $I$. Since $I$ is a squarefree monomial ideal, $P$ is generated by variables, without loss of generality, we assume that $P = (x_1, ..., x_t)$. Since $f \notin P^s$, $r_1 + ... + r_t < s$. Take $\a = (r_1, ..., r_t, s-1 - r_1 -... - r_t, 0, .., 0)$, then $|\a| = s-1$. Now, we have $\partial^*(f) / \partial^* (x^\a) ~|~ x_{t+1}^{r_{t+1}} ... x_n^{r_n}$, which is not contained in $P$. Thus, $\partial^*(f) / \partial^* (x^\a) \notin I$, a contradiction.
\end{proof}

For a monomial $f$ in $S$, we denote $\supp(f)$, the support of $f$, the set of all indices $i \in [n]$ such that $x_i|f$. For an exponent $\a \in \ZZ^n$, we denote $\supp(\a) =\{i\in [n] \mid \ a_i \neq 0\}$, the support of $\a$. For any subset $V \subset [n]$, we denote $$I_V = (f~|~  f \text{ is a monomial which belongs to } I \text{ and  } \supp (f) \subseteq V)$$ be the restriction of $I$ on $V$. We have

\begin{cor}\label{restriction} Let $I$ be a squarefree monomial ideal and $f$ be a monomial in $S$. Denote $V = \supp (f)$. Then, $f \in I^{(s)}$ if and only if $f \in I_V^{(s)}.$
\end{cor}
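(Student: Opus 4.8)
The statement to prove is Corollary \ref{restriction}: if $I$ is a squarefree monomial ideal and $f$ a monomial with $V = \supp(f)$, then $f \in I^{(s)}$ if and only if $f \in I_V^{(s)}$. The plan is to use the differential characterization just established in Lemma \ref{differential_criterion}, namely $I^{(s)} = I^{[s]}$, and likewise $I_V^{(s)} = (I_V)^{[s]}$, so that everything reduces to a statement about $*$-partial derivatives of the single monomial $f$.

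First I would record the trivial containment. Since $I_V \subseteq I$, we have $I_V^{(s)} \subseteq I^{(s)}$ (taking symbolic powers is monotone; alternatively argue directly on minimal primes), so the ``if'' direction is immediate. The content is the converse.

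For the ``only if'' direction, suppose $f \in I^{(s)} = I^{[s]}$. I want to show $f \in (I_V)^{[s]}$, i.e. for every $\a \in \NN^n$ with $|\a| = s-1$ we have $\partial^* f / \partial^* x^\a \in I_V$. Fix such an $\a$. If $x^\a$ does not divide $f$ then $\partial^* f / \partial^* x^\a = 0 \in I_V$, so we may assume $x^\a \mid f$; in particular $\supp(\a) \subseteq \supp(f) = V$, and the monomial $g := \partial^* f / \partial^* x^\a$ again has $\supp(g) \subseteq V$. By hypothesis $g \in I$, and $g$ is a monomial supported on $V$, so by the very definition of the restriction $I_V$ we get $g \in I_V$. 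Hence $f \in (I_V)^{[s]} = I_V^{(s)}$, as desired.

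The only point requiring a little care — and the nearest thing to an obstacle — is the book-keeping that a $*$-partial derivative never enlarges the support: $\supp(\partial^* f/\partial^* x^\a) \subseteq \supp(f)$, with the derivative being zero unless $x^\a \mid f$. Once that is observed the argument is essentially formal. One should also make sure the restriction operation interacts correctly with passing to $I^{[s]}$, but this too is immediate since $I_V$ is by definition generated by the monomials of $I$ supported on $V$, and membership of a monomial supported on $V$ in $I$ is equivalent to membership in $I_V$.
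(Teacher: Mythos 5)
Your proposal is correct and follows essentially the same route as the paper: the easy direction from $I_V \subseteq I$, and for the converse the differential criterion of Lemma \ref{differential_criterion} together with the observation that $*$-partial derivatives of $f$ are supported in $V$, hence lie in $I_V$. The only (harmless) difference is that you explicitly dispose of exponents $\a$ with $x^\a \nmid f$, whereas the paper implicitly restricts to $\b$ with $\supp\b \subseteq V$.
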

\begin{proof} Since $I_V \subseteq I$, if $f \in I_V^{(s)}$ then $f \in I^{(s)}.$ Conversely, assume that $f \in I^{(s)}.$ For any $\b$ such that $\supp \b \subseteq V$ and $|\b| = s-1$, by Lemma \ref{differential_criterion} $\partial^*(f) /\partial^*(x^{\b}) \in I$. But $\supp (\partial^*(f) /\partial^*(x^{\b})) \subseteq \supp (f) = V$, thus $\partial^*(f) /\partial^*(x^{\b}) \in I_V.$ By Lemma \ref{differential_criterion}, $f \in I_V^{(s)}.$
\end{proof}

As a consequence of Corollary \ref{restriction}, we deduce a generalization of \cite[Corollary 4.5]{GHOS} for squarefree monomial ideals.
\begin{cor}\label{restriction_inq} Let $I$ be a squarefree monomial ideal in $S$. Let $V \subseteq [n]$, and $I_V$ be the restriction of $I$ to $V$. Then for all $s \ge 1$,
$$\reg I_V^{(s)} \le \reg I^{(s)}.$$
\end{cor}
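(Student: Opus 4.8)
The plan is to deduce this from Corollary \ref{restriction} together with a standard property of regularity, namely that regularity does not decrease when passing to a well-chosen direct summand or when localizing/specializing by setting variables to zero. Concretely, let $W = [n] \setminus V$, and observe that $I_V$ as an ideal of $S$ lives entirely in the subring $S_V = K[x_i \mid i \in V]$; write $J = I_V \cap S_V$ for that copy. The key point I want to exploit is that the minimal free resolution of $S/I_V$ over $S = K[x_1,\ldots,x_n]$ is obtained from the minimal free resolution of $S_V/J$ over $S_V$ by tensoring with $K[x_i \mid i \in W]$, which is a free (polynomial) extension; hence the graded Betti numbers agree and $\reg_S I_V = \reg_{S_V} J$. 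So it suffices to compare $\reg_S I^{(s)}$ with $\reg_{S_V} (I^{(s)})_V$ after identifying the latter with $J^{(s)}$ via Corollary \ref{restriction}.

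The next step is to show $(I^{(s)})_V = (I_V)^{(s)}$ as ideals, where the symbolic power on the right is taken inside $S$ (equivalently inside $S_V$, since $I_V$ only involves variables in $V$). This is exactly the content of Corollary \ref{restriction}: a monomial $f$ with $\supp(f) \subseteq V$ lies in $I^{(s)}$ if and only if it lies in $I_V^{(s)}$, and since both $(I^{(s)})_V$ and $I_V^{(s)}$ are monomial ideals supported on $V$, they have the same monomial generators and hence coincide. Thus $I_V^{(s)} = (I^{(s)})_V$, the restriction of the monomial ideal $I^{(s)}$ to the variable set $V$.

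It then remains to prove the general fact: for any monomial ideal $L$ in $S$ and any subset $V \subseteq [n]$, one has $\reg(L_V) \le \reg(L)$. I would argue via local cohomology or via Betti numbers. One clean route: $L_V$ is obtained from $L$ by the substitution $x_i \mapsto 0$ for $i \notin V$ composed with inclusion back into $S$; more precisely, $S/L_V$ is, up to the polynomial extension by $\{x_i : i \in W\}$, the quotient $S_V/(L \cap S_V)$ where $L \cap S_V$ is the contraction, and for monomial ideals the contraction to a coordinate subring is exactly the restriction $L_V$ viewed in $S_V$. Setting variables to zero can only decrease regularity: if $x = x_n \notin V$, then from the short exact sequence $0 \to (S/(L:x))(-1) \xrightarrow{\cdot x} S/L \to S/(L,x) \to 0$ and the fact that for a monomial ideal $L$ not involving… — actually the cleanest statement is that for a monomial ideal, restricting to a subset of variables is a repeated application of the operation $L \mapsto (L, x_j) / (x_j)$, and $\reg(S/(L+(x_j))/(x_j)) \le \reg(S/L)$ holds because $K[x_1,\dots,\widehat{x_j},\dots,x_n]/(\text{image of }L)$ is a specialization; this can be checked on Betti numbers since, for a monomial ideal, $\beta_{i,j}$ of the restriction is bounded above by $\beta_{i,j}$ of $L$ by the standard upper-semicontinuity / Taylor complex comparison, or simply because the relevant simplicial homology groups in the Hochster-type formula for the restriction are summands of those for $L$. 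Applying this step for each $i \in W$ in turn gives $\reg(I^{(s)}_V) = \reg((I^{(s)})_V) \le \reg(I^{(s)})$, which is the claim.

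The main obstacle is making the inequality $\reg(L_V) \le \reg(L)$ fully rigorous in a self-contained way; the statement is standard (it is essentially \cite[Corollary 4.5]{GHOS} in the ordinary-power case, and follows from the behaviour of regularity under hyperplane specialization for monomial ideals), so in practice I would cite it, but if a proof is wanted, the Betti-number comparison via Hochster's formula for the restricted complex being a subcomplex of the original is the way to go. Everything else — the passage through $S_V$ and the identification $I_V^{(s)} = (I^{(s)})_V$ — is immediate from Corollary \ref{restriction} and the fact that a polynomial extension does not change graded Betti numbers.
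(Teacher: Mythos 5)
Your proposal is correct and follows essentially the same route as the paper: identify $I_V^{(s)}$ with the restriction $(I^{(s)})_V$ of $I^{(s)}$ to $V$ via Corollary \ref{restriction}, and then invoke the standard fact that restricting a monomial ideal to a subset of the variables (equivalently, adding the complementary variables, which form a regular sequence modulo $I_V^{(s)}$) does not increase regularity --- the paper handles this last step by citing \cite[Proposition 3.11]{NV2}. One small caution: since $I^{(s)}$ is not squarefree, the ``Hochster-type'' justification you sketch should really be phrased via multigraded Betti numbers (upper Koszul simplicial complexes) rather than Hochster's formula, but as you note, citing the standard specialization result is enough, which is exactly what the paper does.
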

\begin{proof}
 By Corollary \ref{restriction}, $I_V^{(s)}$ is the restriction of $I^{(s)}$ to $V$. Let $\{t,\ldots,n\} = [n] \setminus V$. Then, $I_V^{(s)} + (x_t,...,x_n) = I^{(s)} + (x_t,...,x_n).$ The conclusion follows from \cite[Proposition 3.11]{NV2} and the fact that $x_t, ..., x_n$ is a regular sequence with respect to $S/I_V^{(s)}$.
\end{proof}

\subsection{Edge ideals and their symbolic powers} Let $G$ be a simple graph over the vertex set $V(G)=[n]=\{1,2,\ldots,n\}$. The edge ideal of $G$ is defined to be
$$I(G)=(x_ix_j~|~\{i,j\}\in E(G))\subseteq S.$$
For simplicity, we often write $i \in G$ (resp. $ij \in G$) instead of $i \in V(G)$ (resp. $\{i,j\} \in E(G)$).

A clique of size $t$ in $G$ is an induced subgraph of $G$ which is a complete graph over $t$-vertices. We also called a clique of size $3$ a triangle. 

Let $J_1(G)$ be the ideal generated by all squarefree monomials $x_ix_jx_r$ where $\{i,j,r\}$ forms a triangle in $G$. Let $J_2(G)$ be the ideal generated by all squarefree monomials $x_ix_jx_rx_s$ where $\{i,j,r,s\}$ forms a clique of size $4$ in $G$ and all squarefree monomials $x_C$ where $C$ is a $5$-cycle of $G$. 

We have the following expansion formula of the second and third symbolic powers of an edge ideal. Note that the first formula is \cite[Corollary 3.12]{Su}.
\begin{thm}\label{expansion_second} Let $I$ be the edge ideal of a simple graph $G$. Then
$$I^{(2)} = I^2 + J_1(G).$$
\end{thm}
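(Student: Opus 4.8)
The plan is to prove the two inclusions of $I^{(2)} = I^2 + J_1(G)$ separately, working throughout with the $*$-differential description $I^{(2)} = I^{[2]}$ of Lemma~\ref{differential_criterion}; for $s=2$ this says concretely that a monomial $g$ lies in $I^{(2)}$ if and only if $g/x_k \in I$ for every $k \in \supp(g)$. Since $I^2 \subseteq I^{(2)}$ always holds, the inclusion $I^2 + J_1(G) \subseteq I^{(2)}$ reduces to showing that each generator $x_ix_jx_r$ of $J_1(G)$ lies in $I^{(2)}$; and if $\{i,j,r\}$ is a triangle of $G$ then $x_jx_r$, $x_ix_r$, $x_ix_j$ all belong to $I$, so $x_ix_jx_r \in I^{[2]} = I^{(2)}$.

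For the reverse inclusion, since $I^{(2)}$ is a monomial ideal it suffices to take a monomial $f = x_1^{a_1}\cdots x_n^{a_n} \in I^{(2)}$ and show $f \in I^2 + J_1(G)$. First I would reduce to full support: setting $V = \supp(f)$, Corollary~\ref{restriction} gives $f \in I(G[V])^{(2)}$, and since every triangle of the induced subgraph $G[V]$ is a triangle of $G$ one has $I(G[V])^2 + J_1(G[V]) \subseteq I^2 + J_1(G)$; hence we may replace $G$ by $G[V]$ and assume $a_i \ge 1$ for all $i \in [n]$. The hypothesis $f \in I^{(2)}$ then reads $f/x_k \in I$ for every $k \in [n]$. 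If $G$ contains two disjoint edges $\{i,j\}$ and $\{k,l\}$, then $(x_ix_j)(x_kx_l)$ divides $f$, so $f \in I^2$ and we are done.

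The remaining case is that $G$ has no two disjoint edges. An elementary graph-theoretic argument — fix an edge $\{a,b\}$, note that every other edge meets it, and follow the cases — shows that after deleting isolated vertices such a graph is either a triangle or a star $K_{1,m}$. If it is a triangle $\{a,b,c\}$, then $x_ax_bx_c \mid f$ (as $a,b,c \in \supp f$) and $f \in J_1(G)$. If it is a star with centre $c$, then $f/x_c \in I$ forces $x_c^2 \mid f$; if the star has two distinct leaves $l_1, l_2$ this yields $(x_cx_{l_1})(x_cx_{l_2}) \mid f$ and $f \in I^2$, while if $G$ is a single edge $\{c,l\}$ the two conditions $f/x_c \in I$ and $f/x_l \in I$ give $x_c^2 \mid f$ and $x_l^2 \mid f$, whence $(x_cx_l)^2 \mid f$ and again $f \in I^2$.

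The crux — and the only place the symbolic hypothesis does real work — is this last case: classifying graphs with no $2$-matching and using the memberships $f/x_k \in I$ to pin down the exponents of $f$; the steps before it are purely formal manipulations of supports. It is also the step with no clear analogue for higher symbolic powers, where the monomials lying outside $I^s$ are governed by substantially richer combinatorics.
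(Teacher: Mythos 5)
Your proof is correct, and its opening moves coincide with the paper's: the easy inclusion via Lemma \ref{differential_criterion}, and the reduction to $\supp f = V(G)$ via Corollary \ref{restriction}. Where you genuinely diverge is the endgame. The paper splits according to whether $G$ contains a triangle, a cycle of length $\ge 4$, or no cycle at all, and in the acyclic case it does not argue directly: it invokes the fact that symbolic and ordinary powers agree for bipartite graphs (forests being bipartite), i.e.\ the Simis--Vasconcelos--Villarreal theorem cited in the introduction. You instead split on whether $G$ has two disjoint edges; when it does not, you classify such graphs (a triangle or a star, after discarding isolated vertices) and settle the star case by the explicit exponent computation that $f/x_c \in I$ forces $x_c^2 \mid f$, which is indeed the only place the symbolic hypothesis does real work. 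Your route is self-contained and more elementary --- it avoids leaning on the bipartite characterization --- at the price of a small graph-theoretic classification, while the paper's route is shorter because it outsources the cycle-free case to a known theorem. Both case analyses are exhaustive; the one point worth making explicit in yours is that $V = \supp f$ necessarily contains an edge of $G$ (since $f \in I^{(2)} \subseteq I$), so the degenerate case of an edgeless $G[V]$ never occurs.
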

\begin{proof} Using Lemma \ref{differential_criterion}, it is easy to see that the left hand side contains the right hand side. Conversely, let $f \in I^{(2)}$ be a monomial generator. By Lemma \ref{restriction}, we may assume that $\supp f = V(G)$. If $G$ contains a triangle, then $f \in J_1(G)$. If $G$ contains a cycle of length $ \ge 4$, then $f \in I^2$. Thus we may assume that $G$ do not contain any cycles. In this case $I(G)^{(2)} = I(G)^2$, thus $f \in I^2$. 
\end{proof}
\begin{thm}\label{expansion_third} Let $I$ be the edge ideal of a simple graph $G$. Then
$$I^{(3)} = I^3 + I J_1(G) + J_2(G).$$
\end{thm}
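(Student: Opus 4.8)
The plan is to mimic the proof of Theorem~\ref{expansion_second}, reducing to the case where the support of a monomial generator is all of $V(G)$ by Corollary~\ref{restriction}, and then to do a combinatorial case analysis on the structure of $G$. First I would check the easy inclusion: using Lemma~\ref{differential_criterion} (i.e. $I^{(3)} = I^{[3]}$), it suffices to verify that each generator of $I^3 + IJ_1(G) + J_2(G)$ lies in $I^{[3]}$, which amounts to checking that every first- and second-order $*$-partial derivative of $x_ix_jx_rx_s$ (for a $K_4$), of $x_C$ (for a $5$-cycle $C$), and of a generator of $IJ_1$ and $I^3$, lands in $I(G)$. This is a finite, local computation on cliques and the $5$-cycle.

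For the reverse inclusion, let $f$ be a monomial generator of $I^{(3)}$ with $\supp f = V(G)$; I want to show $f \in I^3 + IJ_1(G) + J_2(G)$. The strategy is to split on the local structure of $G$: if $G$ is bipartite (equivalently, has no odd cycle), then $I^{(3)} = I^3$ by \cite[Theorem 5.9]{SVV} and we are done. Otherwise $G$ contains an odd cycle; I would first handle the case where $G$ has a short odd cycle (a triangle or a $5$-cycle) and use it to peel off a factor lying in $J_1(G)$ or $J_2(G)$, leaving a quotient that belongs to $I^{(2)}$ or $I$ respectively, then invoke Theorem~\ref{expansion_second} and induction on the number of variables. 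The more delicate point is controlling what "$f$ uses a triangle or $K_4$ or $5$-cycle efficiently" means at the level of exponents: I expect to argue that if $f \in I^{(3)}$ but $f \notin I^3 + IJ_1(G)$, then the "defect" forces a configuration of edges on $\supp f$ that can only be an induced $5$-cycle or $K_4$, so $f$ must be divisible by the corresponding generator of $J_2(G)$. Here the differential/colon criterion from Lemma~\ref{differential_criterion} is the right tool: membership in $P^3$ for each minimal prime $P = (x_i : i \in W)$ (where $[n]\setminus W$ is an independent set) translates into the inequality $\sum_{i \in W} r_i \ge 3$, and I would extract the combinatorial configuration by choosing minimal vertex covers that are nearly tight.

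The main obstacle will be the step isolating the $5$-cycle contribution: unlike the quadratic case, where $I^{(2)}/(I^2+J_1)$ is governed solely by triangles, the cubic symbolic power can detect a $5$-cycle "for the first time" at degree~$5$, and one must rule out any other minimal odd configuration contributing at this order. Concretely, I would need to show that a squarefree $f = x_C$ supported on an induced odd cycle $C$ of length $\ge 7$ is already in $I^3$ (it is, since a $(2k+1)$-cycle's edge ideal has the $N$-th symbolic power equal to the ordinary one once $2k+1 > 2N+1$, or by a direct counting of an edge-cover of multiplicity $3$), so only $C_3$ and $C_5$ survive, and then that a non-squarefree $f$ contributing beyond $I^3 + IJ_1(G)$ must, after dividing out a suitable edge, reduce to the $5$-cycle case handled by Theorem~\ref{expansion_second} applied to a smaller graph. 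Carefully bounding the support and running the induction so that the restriction $I_V$ in Corollary~\ref{restriction} stays an edge ideal is the technical heart; the rest is bookkeeping with the $*$-differential criterion.
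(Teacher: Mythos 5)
Your outer skeleton matches the paper's: prove the easy inclusion with the $*$-differential criterion (Lemma \ref{differential_criterion}), reduce to $\supp f=V(G)$ by Corollary \ref{restriction}, dispose of the bipartite case via \cite[Theorem 5.9]{SVV}, note that a $5$-cycle in $G$ (and you do not even need it induced, since $J_2(G)$ is generated by all $5$-cycles) or a matching of size $3$ finishes immediately. But the decisive case --- $G$ has a triangle, no cycle of length $\ge 5$, matching number at most $2$, and $f\in I^{(3)}\setminus(I^3+IJ_1(G))$ --- is exactly where you stop proving and start expecting: ``I expect to argue that the defect forces a configuration \ldots which can only be an induced $5$-cycle or $K_4$.'' That sentence is the theorem. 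The paper's actual work here is an exponent analysis: writing $f=x_1^{\alpha_1}\cdots x_n^{\alpha_n}$ with $123$ a triangle, reducing to $\alpha_1=\alpha_2=\alpha_3=1$, applying the criterion to $\partial^* f/\partial^*(x_2x_3)\in I$ to produce a neighbor $4$ of $1$ with $x_1x_4\in I$, doing the same for $2$ and $3$ to get $x_2x_i,x_3x_j\in I$, and then splitting on $|\{4,i,j\}|$: three distinct neighbors give $f\in I^3$, all equal gives the clique $\{1,2,3,4\}$ and $f\in J_2(G)$, and the mixed case lands in $IJ_1(G)$ via the new triangle $124$. Nothing in your proposal carries out this (or an equivalent) analysis, and ``choosing minimal vertex covers that are nearly tight'' is not a substitute for it.

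Moreover, the specific mechanism you do propose is flawed. Peeling off a triangle $x_T\mid f$ and claiming the quotient lies in $I^{(2)}$ fails precisely on the configuration that creates the new generators: for $G=K_4$ and $f=x_1x_2x_3x_4\in I^{(3)}$, the quotient $f/x_{123}=x_4$ is in neither $I^{(2)}$ nor $I$, so your reduction would not detect that $f\in J_2(G)$. Likewise, the claim that the non-squarefree case ``reduces to the $5$-cycle case handled by Theorem \ref{expansion_second}'' is confused: Theorem \ref{expansion_second} concerns the second symbolic power and involves only triangles; $5$-cycles first appear at the third power, so there is nothing there to invoke. Your remark that odd cycles of length $\ge 7$ already lie in $I^3$ is correct but peripheral. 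In short, the reductions are sound and parallel the paper, but the core combinatorial step distinguishing $I^3$, $IJ_1(G)$, and $J_2(G)$ is missing, and the replacement you sketch would break down on the $K_4$ case.
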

\begin{proof}
Using Lemma \ref{differential_criterion}, it is easy to see that the left hand side contains the right hand side. Conversely, let $f \in I^{(3)}$ be a monomial generator. By Lemma \ref{restriction}, we can assume that $\supp (f) = V(G)$. If $G$ contains a $5$-cycle, then $f \in J_2(G)$. If the matching number of $G$ is at least 3, then $f \in I^3$. Thus, we may assume that $G$ does not contain a cycle of length $\ge 5$. If $G$ does not contain a triangle, then $G$ is bipartite, and thus $I(G)^{(3)} = I(G)^3$, and we are done. Thus we may assume that $G$ contains a triangle, called $123$. Let $f = x_1^{\alpha_1}\cdots x_n^{\alpha_n}$ where $\alpha_i \ge 1$. If two of the exponents $\alpha_1, \alpha_2, \alpha_3$ is at least $2$, then $f \in IJ_1(G)$. Thus, assume that $\alpha_2 = \alpha_3 = 1$. By Lemma \ref{differential_criterion}, $\partial^*f / \partial^*(x_2x_3) = x_1^{\alpha_1} x_4^{\alpha_4} \cdots x_n^{\alpha_n} \in I$. If $x_4^{\alpha_4} \cdots x_n^{\alpha_n} \in I$ then $f \in IJ_1(G)$. Thus, we may assume that $x_4^{\alpha_4} \cdots x_n^{\alpha_n} \notin I$, and $x_1x_4 \in I$. If $\alpha_1 > 1$, then $f \in IJ_1(G)$ as $(x_1x_4)\cdot(x_1x_2x_3) | f$. Thus, we may assume that $\alpha_1 = 1$. Similarly, we deduce that $x_2 x_i$ and $x_3 x_j \in I$ for some $i, j \ge 4$. If $|\{4,i,j\}| = 3$, then $f \in I^3$. If $|\{4,i,j\}| = 1$, then $f \in J_2(G)$, as $\{1,2,3,4\}$ forms a clique of size $4$. Now, assume that $i =4$, and $j \neq 4$. In this case $x_1x_2x_4 \in J_1(G)$ and thus $(x_1x_2x_4)(x_3x_j) \in IJ_1$. This concludes our proof.
\end{proof}

\subsection{Degree complexes}
For a monomial ideal $I$ in $S$, Takayama in \cite{T} found a combinatorial formula for $\dim_KH_\m^i(S/I)_\a$ for all $\a\in\ZZ^n$ in terms of certain simplicial complexes which are called degree complexes. For every $\a = (a_1,\ldots, a_n) \in \ZZ^n$ we set $G_\a = \{i\mid \ a_i < 0\}$ and write $x^{\a} = \Pi_{j=1}^n x_j^{a_j}$. Thus, $G_\a =\emptyset$ whenever $\a \in \NN^n$. The degree complex $\D_\a(I)$ is the simplicial complex whose faces are $F \setminus G_\a$, where $G_\a\subseteq F\subseteq [n]$, so that for every minimal generator $x^\b$ of $I$ there exists an index $i \not\in F$ with $a_i < b_i$. It is noted that $\D_\a(I)$ may be either the empty set or $\{\emptyset\}$ and its vertex set may be a proper subset of $[n]$. The next lemma is useful to compute the regularity of a monomial ideal in terms of its degree complexes.

\begin{lem}\label{Key0}
Let $I$ be a monomial ideal in $S$. Then
\begin{multline*}
\reg(S/I)=\max\{|\a|+i~|~\a\in\NN^n,i\ge 0,\h_{i-1}(\lk_{\D_\a(I)}F;K)\ne 0\\ \text{ for some $F\in \D_\a(I)$ with $F\cap \supp \a=\emptyset$}\}.
\end{multline*}
In particular, if $I=I_\D$ is the Stanley-Reisner ideal of a simplicial complex $\D$ then
$\reg(K[\D])=\max\{i~|~i\ge 0,\h_{i-1}(\lk_{\D}F;K)\ne 0\text{ for some }F\in \D\}$.
\end{lem}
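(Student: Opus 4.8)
The plan is to combine Takayama's combinatorial formula for the local cohomology of a monomial quotient with the definition of regularity as $\reg(S/I) = \max\{a_i(S/I) + i \mid i \ge 0\}$. First I would recall Takayama's theorem: for $\a \in \ZZ^n$ one has
$$\dim_K H^i_\m(S/I)_\a = \dim_K \widetilde{H}_{i - |G_\a| - 1}(\Delta_\a(I); K)$$
under a mild condition on $\a$ (and the module is zero otherwise), where $G_\a = \{j \mid a_j < 0\}$ and $\Delta_\a(I)$ is the degree complex. The key reduction is that, because $S/I$ is $\ZZ^n$-graded, it suffices to understand which multidegrees $\a$ can contribute. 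A standard argument (using that $H^i_\m(S/I)$ is Artinian, hence its multidegrees are bounded above componentwise, together with an analysis of $\Delta_\a(I)$) shows that one may restrict attention to $\a \in \NN^n$: if some $a_j < 0$ one can pass to a related exponent with $G_\a$ shrunk, at the cost of shifting the homological degree, so the extremal contributions are already captured by $\a$ with all entries $\ge 0$. For $\a \in \NN^n$ we have $G_\a = \emptyset$, so Takayama's formula becomes $\dim_K H^i_\m(S/I)_\a = \dim_K \widetilde{H}_{i-1}(\Delta_\a(I); K)$.

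Next I would pass from the degree complex $\Delta_\a(I)$ to links within it. The point is that $\widetilde{H}_{i-1}(\Delta_\a(I); K) \ne 0$ is not by itself the right bookkeeping device once we also want to track $F \cap \supp \a = \emptyset$; instead, one uses the well-known fact that for $\a \in \NN^n$ and $F$ a face of $\Delta_\a(I)$ disjoint from $\supp\a$, the complex $\lk_{\Delta_\a(I)} F$ equals (up to relabeling) a degree complex $\Delta_{\a'}(I')$ for a suitable monomial ideal $I'$ in fewer variables and $\a' \in \NN^{n'}$, and the reduced homology of these links computes the Betti numbers / local cohomology in the neighboring multidegrees $\a + \mathbf{e}_F$. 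Equivalently, one uses Hochster-type / Takayama-type formulas directly: the contribution to $t_i$ (or to $a_i$) in multidegree $\a$ is governed by $\widetilde{H}_{i-1}(\lk_{\Delta_\a(I)} F; K)$ for faces $F$ with $F \cap \supp\a = \emptyset$, and the degree of the corresponding syzygy is $|\a| + |F|$. Summing over all such $F$ and taking the maximum of $|\a| + i$ recovers $\reg(S/I)$.

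The last step is the Stanley-Reisner specialization. If $I = I_\Delta$ is squarefree, then $\Delta_\a(I_\Delta)$ depends on $\a$ only through $\supp\a$ restricted to the ``new'' behavior, and for $\a = \o$ one has $\Delta_\o(I_\Delta) = \Delta$; more generally, allowing $\a$ with larger support only shrinks the relevant complex to a link or subcomplex of $\Delta$ while increasing $|\a|$, and a short check shows these never beat the $\a = \o$ contribution in the formula $\max\{|\a| + i\}$ because the squarefree generators are ``detected'' at level $0$ or $1$ in each coordinate. Hence the maximum is attained at $\a = \o$, giving $\reg(K[\Delta]) = \max\{i \mid \widetilde{H}_{i-1}(\lk_\Delta F; K) \ne 0 \text{ for some } F \in \Delta\}$, which is the classical formula of Reisner/Hochster. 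The main obstacle I anticipate is the bookkeeping in the reduction to $\a \in \NN^n$ and the precise matching of homological degrees when one replaces $\Delta_\a(I)$ by links $\lk_{\Delta_\a(I)} F$ — getting the index shifts and the side condition $F \cap \supp\a = \emptyset$ exactly right, rather than off by one, is the delicate part; everything else is a direct application of Takayama's formula and the definition of regularity. (In practice one would cite \cite{T} for Takayama's formula and known refinements of it, e.g.\ from \cite{NV2}, so that this lemma becomes essentially a repackaging.)
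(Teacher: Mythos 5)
Your strategy is in spirit the same as the paper's (Takayama's formula, in the packaged form of the regularity formula of Caviglia--Ha--Herzog--Kummini--Terai--Trung, $\reg(S/I)=\max\{|\a|+|G_\a|+i \mid \a\in\ZZ^n,\ \h_{i-1}(\D_\a(I);K)\ne 0\}$, followed by trading negative coordinates for links), but the step that carries the whole lemma is left unproved and, where you do state it, it points the wrong way. The identity actually needed is that for $\a\in\NN^n$ and $F\in\D_\a(I)$ with $F\cap\supp\a=\emptyset$ one has $\lk_{\D_\a(I)}F=\D_\b(I)$, where $\b$ agrees with $\a$ off $F$ and equals $-1$ on $F$ (so $|\b|+|G_\b|=|\a|$), together with the normalization that an extremal $\b\in\ZZ^n$ may be assumed to have all negative entries equal to $-1$, because $\D_\b(I)$ depends on the negative coordinates only through the set $G_\b$ while raising entries to $-1$ only increases $|\b|$. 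You instead assert that the link is a degree complex of some smaller ideal $I'$ governing the ``neighboring multidegrees $\a+\e_F$'': the relevant multidegree is $\a$ \emph{minus} the indicator of $F$, and the complex is a degree complex of the \emph{same} ideal $I$. Moreover, your opening claim that one may ``restrict attention to $\a\in\NN^n$'' is misleading as a reduction made before the links enter: for a squarefree ideal all nonvanishing local cohomology sits in degrees with nonpositive coordinates, and it is precisely the link terms that record those contributions. As written, neither inequality of the asserted equality is derived; the ``delicate bookkeeping'' you defer is exactly the content of the lemma.

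The Stanley--Reisner specialization also rests on the wrong mechanism. The reason only $\a=\o$ survives is not that larger $|\a|$ ``never beats'' the $\a=\o$ contribution; a priori an increase in $|\a|$ could be compensated by homology of a link in lower homological degree, and you do not rule this out. The correct argument (used in the paper, following Takayama) is vanishing: if some $a_j\ge 1$, then since every minimal generator of $I_\D$ has degree at most one in $x_j$, every facet of $\D_\a(I_\D)$ contains $j$, so $\D_\a(I_\D)$ is a cone over $j$; and because $j\in\supp\a$, any admissible face $F$ avoids $j$, hence $\lk_{\D_\a(I_\D)}F$ is again a cone and has vanishing reduced homology. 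Thus such $\a$ contribute nothing, and $\D_\o(I_\D)=\D$ gives the stated formula.
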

\begin{proof}
By \cite[Proposition 2.5]{CHHKTT},
$$\reg(S/I)=\max\{|\a|+|G_\a|+i~|~\a\in\ZZ^n,i\ge 0,\h_{i-1}(\D_\a(I);K)\ne 0\}.$$
Assume, $\reg(S/I)=|\a|+|G_\a|+i$ for some $\a\in \ZZ^n$. By maximality, we can assume $\a_i=-1$ for all $i\in G_{\a}$. Let ${\a}^+\in \NN^n$ by ${\a}^{+}_{i}=a_i$ if $i\notin G_\a$ and ${\a}^{+}_{i}=0$ otherwise. Then, $\D_\a(I)=\lk_{\D_{\a^+}(I)}G_\a$; $\reg(S/I)=|{\a}^+|+i$ and $\h_{i-1}(\lk_{\D_{\a^+}(I)}G_\a;k)\ne 0$. It implies that 
\begin{multline*}
\reg(S/I)\le\max\{|\a|+i~|~\a\in\NN^n,i\ge 0,\h_{i-1}(\lk_{\D_\a(I)}F;K)\ne 0 \\
\text{ for some $F\in \D_\a(I)$ with $F\cap \supp \a=\emptyset$}\}.
\end{multline*}
Converserly, if $\h_{i-1}(\lk_{\D_\a(I)}F;K)\ne 0$ for some $\a\in\NN^n,i\ge 0, F\in \D_\a(I)$ with $F\cap \supp \a=\emptyset$, we only put $\b\in\ZZ^n$ such that $\b_i=-1$ for all $i\in F$ and $b_i=a_i$ otherwise. Then, $\h_{i-1}(\D_\b(I);K)\ne 0$ and $|\b|+|G_\b|+i=|\a|+i$ and using again \cite[Proposition 2.5]{CHHKTT}, we obtain the converse inequality. 

If $I=I_\D$ is the Stanley-Reisner ideal of a simplicial complex $\D$, as in the proof of \cite[Theorem 1]{T}, $\tilde H_i(\D_\a(I),K) = 0$ for all $i$ for each $\a\in\NN^n$ such that there is a component $a_j \ge 1$. Then, we only consider $a_j=0$ for all $j$ (i.e. $\a=\o$). From definition, $\D_\o(I)=\D$. This completes our proof.
\end{proof}

\begin{rem} Let $I$ be a monomial ideal in $S$ and a vector $\a\in \ZZ^n$. In the proof of Theorem 1 in \cite{T}, he showed that if there exists $j\in [n]\setminus G_\a$ such that $a_j\ge \rho_j=\max\{\deg_{x_j}(u)~|~ u\text{ is a minimal monomial generator of } I\}$ then $\D_\a(I)$ is a cone. Thus, we only consider some vectors $\a$ which belongs to the finite set
	$$\Gamma(I)=\{\a\in\NN^n~|~ a_j<\rho_j\text{ for all } j=1,\ldots,n\}.$$ 
\end{rem}

From this, we obtain an upper bound of the regularity of a monomial ideal in terms of its degree complexes.

\begin{cor}
Let $I$ be a monomial ideal in $S$. Then
$$\reg(S/I)\le \max\{|\a|+\reg(K[\D_\a(I)])~|~\a\in\Gamma(I)\}.$$
\end{cor}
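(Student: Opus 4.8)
The plan is to combine the homological formula for $\reg(S/I)$ from Lemma \ref{Key0} with the cone observation recorded in the Remark, and then compare term by term with the second (Stanley--Reisner) formula of Lemma \ref{Key0}. First I would invoke Lemma \ref{Key0} to write $\reg(S/I)$ as the maximum of $|\a| + i$ over all $\a \in \NN^n$ and $i \ge 0$ for which $\h_{i-1}(\lk_{\D_\a(I)} F; K) \ne 0$ for some face $F \in \D_\a(I)$ with $F \cap \supp \a = \emptyset$.

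The next step is to argue that only $\a \in \Gamma(I)$ can contribute to this maximum. Indeed, if $\a \in \NN^n \setminus \Gamma(I)$, then $a_j \ge \rho_j \ge 1$ for some $j$, so by the Remark $\D_\a(I)$ is a cone over $j$; since $a_j \ge 1$ forces $j \in \supp \a$, any face $F$ with $F \cap \supp \a = \emptyset$ omits $j$, and hence $\lk_{\D_\a(I)} F$ is again a cone over $j$, thus acyclic, and contributes nothing to the maximum. Therefore the maximum in Lemma \ref{Key0} may be taken over $\a \in \Gamma(I)$ only. Now fix such an $\a$. Viewing $\D_\a(I)$ as a simplicial complex, the second part of Lemma \ref{Key0}, applied to the Stanley--Reisner ideal $I_{\D_\a(I)}$, gives $\reg(K[\D_\a(I)]) = \max\{i \ge 0 \mid \h_{i-1}(\lk_{\D_\a(I)} F; K) \ne 0 \text{ for some } F \in \D_\a(I)\}$. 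Since dropping the extra requirement $F \cap \supp \a = \emptyset$ only enlarges the set of eligible faces $F$, every index $i$ arising for this particular $\a$ in the formula above satisfies $i \le \reg(K[\D_\a(I)])$. Combining this with the reduction to $\Gamma(I)$ yields $\reg(S/I) \le \max\{|\a| + \reg(K[\D_\a(I)]) \mid \a \in \Gamma(I)\}$, which is the desired bound.

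Since every step is a direct consequence of already-established statements, I expect no serious obstacle. The only points requiring a little care are the persistence of the cone structure under taking links, which is what legitimizes restricting the outer maximum to the finite set $\Gamma(I)$, and keeping the homological shift $i-1$ and the ``for some $F$'' quantifier consistent between the two halves of Lemma \ref{Key0}.
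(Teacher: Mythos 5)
Your proof is correct and follows exactly the route the paper intends: the corollary is presented as an immediate consequence of Lemma \ref{Key0} and the Remark, namely restricting the outer maximum to $\Gamma(I)$ via the cone/acyclicity observation (including, as you carefully note, the persistence of the cone vertex under taking links at faces disjoint from $\supp \a$) and then bounding the homological index $i$ by $\reg(K[\D_\a(I)])$ using the Stanley--Reisner half of Lemma \ref{Key0}. The only caveat, inherited from the paper's own formulation rather than from your argument, is the implicit assumption that every variable divides some minimal generator of $I$ (so that $\rho_j \ge 1$ for all $j$), without which $\Gamma(I)$ would be empty and the stated bound degenerate.
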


One might expect that this inequality becomes an equality. Unfortunately, this is not the case.

\begin{exm} Let $I = x_1 (x_2,x_3,x_4,x_5)$ be an edge ideal in $S=K[x_1,\ldots,x_{5}]$. For each $s \ge 2$, let $x^\a = (x_2x_3x_4x_5)^{s-1}.$ Then, $\a\in\Gamma(I^s)$. Furthermore, we have $\reg(S/I^s)=2s-1$, $|\a| = 4s-4$, and $\reg(K[\D_\a(I^s))]=0$. Thus, $\reg(S/I^s)<\max\{|\a|+\reg(K[\D_\a(I^s)])~|~\a\in\Gamma(I^s)\}.$
\end{exm}

We call a pair $(\a,i) \in \NN^n\times\NN$ {\it an extremal exponent of the ideal $I$}, if $\reg(S/I) = |\a| + i$ as in Lemma \ref{Key0}. It is clear that if $(\a,i) \in \NN^n\times\NN$ is an extremal exponent of $I$ then $x^\a \notin I$ and $\Delta_{\a}(I)$ is not a cone over $t$ with $t\in\supp\a$. From the definition, it is easy to see the following

\begin{lem}\label{extremal_red} Let $I, J$ be proper monomial ideals of $S$. Let $(\a,i)$ be an extremal exponent of $I$. If $\Delta_\a(I) = \Delta_\a(J)$, then $\reg I \le \reg J$. In particular, if $J \subseteq I$ and $\Delta_\a(I) = \Delta_\a(J)$ for all exponent $\a\in\NN^n$ such that $x^\a \notin I$ then $\reg I \le \reg J.$
\end{lem}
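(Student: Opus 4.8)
The plan is to read off both statements directly from the cohomological description of regularity in Lemma~\ref{Key0}. Since $(\a,i)$ is an extremal exponent of $I$, by definition $\reg(S/I)=|\a|+i$, and by Lemma~\ref{Key0} this value is witnessed by a face $F\in\D_\a(I)$ with $F\cap\supp\a=\emptyset$ and $\h_{i-1}(\lk_{\D_\a(I)}F;K)\ne 0$. The key observation is that this witness --- the exponent $\a$, the face $F$, the condition $F\cap\supp\a=\emptyset$, and the nonvanishing of $\h_{i-1}(\lk_{\D_\a(I)}F;K)$ --- depends only on $\a$ and on the simplicial complex $\D_\a(I)$, not on the ideal $I$ in any other way.

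Granting the hypothesis $\D_\a(I)=\D_\a(J)$, the same $\a$ and $F$ therefore satisfy $F\in\D_\a(J)$, $F\cap\supp\a=\emptyset$, and $\h_{i-1}(\lk_{\D_\a(J)}F;K)\ne 0$. Substituting the pair $(\a,i)$ into the maximum on the right-hand side of the formula of Lemma~\ref{Key0} applied to $J$ gives $\reg(S/J)\ge|\a|+i=\reg(S/I)$. Using $\reg L=\reg(S/L)+1$ for a nonzero proper monomial ideal $L$ and adding $1$ to both sides yields $\reg I\le\reg J$, which is the first assertion.

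For the ``in particular'' part, I would invoke the remark made just before the lemma that an extremal exponent $(\a,i)$ of $I$ must have $x^\a\notin I$: otherwise $x^\a$ is divisible by a minimal generator of $I$, so $\D_\a(I)=\emptyset$, no face $F$ exists, and $(\a,i)$ cannot realize $\reg(S/I)$ via Lemma~\ref{Key0}. Hence the quantified hypothesis ``$\D_\a(I)=\D_\a(J)$ for all $\a\in\NN^n$ with $x^\a\notin I$'' applies to this particular extremal exponent, and the conclusion follows from the first part. The argument is a direct unwinding of Lemma~\ref{Key0}, so no real obstacle arises; the only points needing (minor) attention are that the homological witness transfers verbatim once the two degree complexes agree, and that extremal exponents automatically lie outside $I$ so that the second, quantified hypothesis can be applied.
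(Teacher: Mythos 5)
Your proposal is correct and follows essentially the same route as the paper: transfer the witnessing pair $(\a,i)$ and face $F$ via the equality $\D_\a(I)=\D_\a(J)$ and apply Lemma~\ref{Key0} to $J$ to get $\reg(S/J)\ge|\a|+i=\reg(S/I)$. Your added justification that an extremal exponent satisfies $x^\a\notin I$ (since otherwise $\D_\a(I)$ is the void complex, by Lemma~\ref{Key1}) matches the remark the paper makes just before the lemma, so the quantified ``in particular'' hypothesis applies exactly as you say.
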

\begin{proof} By definition, there exists a face $F \in \Delta_\a(I)$ such that $ \supp F \cap \supp \a = \emptyset$ and $\h_{i-1}(\lk_{\Delta_\a(I)} F;K) \neq 0$ and $\reg I = |\a| + i$. Since $\Delta_\a(I) = \Delta_\a(J)$, by Lemma \ref{Key0} $\reg J \ge |\a| + i$ as required.
\end{proof}

The next lemma appeared in \cite{MTru}, and we would like to sketch the proof for completeness. It is very useful to compute the degree complexes of powers of an edge ideal in this paper.

\begin{lem}\label{Key1}
Let $I$ be a monomial ideal in $S$ and $\a\in\NN^n$. Then
$$I_{\Delta_{\a}(I)}=\sqrt{I : x^\a}.$$
In particular, $x^\a\in I$ if and only if $\Delta_{\a}(I)$ is the void complex. 
\end{lem}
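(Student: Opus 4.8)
The plan is to show that both $\sqrt{I:x^\a}$ and $I_{\Delta_\a(I)}$ coincide with one explicit squarefree monomial ideal. For a minimal monomial generator $x^\b$ of $I$, set $F(\b)=\{i\in[n]\mid a_i<b_i\}$. I will argue that
$$\sqrt{I:x^\a}\;=\;\bigl(x_{F(\b)}\ \big|\ x^\b\text{ is a minimal generator of }I\bigr)\;=\;I_{\Delta_\a(I)}.$$

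For the left-hand equality: since $I:x^\a=\sum_{\b}\bigl((x^\b):x^\a\bigr)=\bigl(x^\b/\gcd(x^\b,x^\a)\mid x^\b\text{ a minimal generator of }I\bigr)$, the ideal $I:x^\a$ is monomial with this generating set. The support of $x^\b/\gcd(x^\b,x^\a)$ is exactly $\{i\mid b_i>a_i\}=F(\b)$. Using the elementary fact that the radical of a monomial ideal is generated by the squarefree parts of any generating set — a monomial $g$ satisfies $g^N\in(x^\b/\gcd(x^\b,x^\a))$ for some $N$ if and only if $\supp g\supseteq F(\b)$ — we obtain $\sqrt{I:x^\a}=(x_{F(\b)}\mid x^\b\text{ a minimal generator of }I)$.

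For the right-hand equality: because $\a\in\NN^n$ we have $G_\a=\emptyset$, so by the definition of the degree complex a subset $F\subseteq[n]$ is a face of $\Delta_\a(I)$ precisely when for every minimal generator $x^\b$ of $I$ there is an index $i\notin F$ with $a_i<b_i$, i.e. precisely when $F(\b)\not\subseteq F$ for all such $\b$. Negating, $F$ is a non-face of $\Delta_\a(I)$ if and only if $F\supseteq F(\b)$ for some minimal generator $x^\b$. Hence $I_{\Delta_\a(I)}=(x_F\mid F\text{ a non-face of }\Delta_\a(I))$ is generated by those $x_F$ with $F\supseteq F(\b)$ for some $\b$, and each such $x_F$ is a multiple of $x_{F(\b)}$, so $I_{\Delta_\a(I)}=(x_{F(\b)}\mid x^\b\text{ a minimal generator of }I)$, matching the formula above. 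The "in particular" statement is then immediate in either of two ways: directly, $\Delta_\a(I)$ is void $\iff$ $\emptyset$ is a non-face $\iff$ $F(\b)=\emptyset$ for some $\b$ $\iff$ $b_i\le a_i$ for all $i$ and some minimal generator $x^\b$ $\iff$ $x^\b\mid x^\a$ for some such $\b$ $\iff$ $x^\a\in I$; or via the main identity, since $\Delta_\a(I)$ is void exactly when $I_{\Delta_\a(I)}=S$, i.e. $\sqrt{I:x^\a}=S$, i.e. $I:x^\a=S$, i.e. $x^\a\in I$.

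I do not expect a genuine obstacle: the statement is essentially a bookkeeping unwinding of the definitions of $\Delta_\a(I)$ and of the colon ideal. The only points needing mild care are the degenerate conventions (distinguishing the void complex from $\{\emptyset\}$, and the trivial cases $I=0$ or $x^\a\in I$) and the standard lemma that passing to the radical of a monomial ideal amounts to replacing generators by their squarefree parts; both are routine.
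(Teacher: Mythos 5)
Your proof is correct and is essentially the paper's argument in slightly different packaging: the paper verifies directly, for each subset $F$, the chain of equivalences $x_F\in I_{\Delta_\a(I)}\iff F\notin\Delta_\a(I)\iff (x_F)^t x^\a\in I$ for some $t\iff x_F\in\sqrt{I:x^\a}$, while you identify both sides with the explicit ideal $(x_{F(\b)}\mid x^\b\in G(I))$ using the standard descriptions of $I:x^\a$ and of radicals of monomial ideals. The substance (non-faces correspond to supersets of the sets $F(\b)=\{i\mid a_i<b_i\}$, and radical membership is support containment) is the same, so no further comparison is needed.
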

\begin{proof}
For any $F\subseteq [n]$, let $x_F=\prod_{i\in F}x_i$. We have
\begin{multline*}
x_F\in I_{\Delta_{\a}(I)}\Longleftrightarrow F\notin\Delta_{\a}(I) \Longleftrightarrow \exists ~ x^\b\in G(I)\text{ such that }\forall~ i\notin F, b_i\le a_i\\
\Longleftrightarrow \exists~ t\in\NN\setminus\{0\}, (x_F)^tx^\a\in I \Longleftrightarrow x_F\in\sqrt{I : x^\a}.
\end{multline*}
\end{proof}

The following lemma is essential to using the induction method in studying the regularity of a monomial ideal.

\begin{lem}\label{red0} Let $I$ be a monomial ideal and the pair $(\a,i) \in \NN^n\times\NN$ be its extremal exponent. If $x$ is a variable that appears in $\sqrt{I:x^\a}$ and $x \notin \supp \a$, then $$\reg (I) = \reg (I,x).$$
\end{lem}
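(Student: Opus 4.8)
The plan is to use Lemma~\ref{Key0} together with Lemma~\ref{Key1} to show that the extremal exponent $(\a,i)$ of $I$ is also an extremal exponent for $(I,x)$, and then invoke Lemma~\ref{extremal_red}. Write $J = (I,x) = I + (x)$. Since $J \supseteq I$, we automatically have $\reg J \le \reg I$ by, e.g., the first part of Lemma~\ref{extremal_red} once we know the degree complexes agree; so the crux is the reverse inequality $\reg J \ge \reg I$. For this it suffices to prove $\Delta_\a(I) = \Delta_\a(J)$, because then Lemma~\ref{extremal_red} applies verbatim: the same face $F$ with $F \cap \supp\a = \emptyset$ and $\widetilde H_{i-1}(\lk_{\Delta_\a(I)}F;K) \ne 0$ witnesses $\reg J \ge |\a| + i = \reg I$.

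The key step is therefore the identity $\Delta_\a(I) = \Delta_\a(J)$. By Lemma~\ref{Key1}, $I_{\Delta_\a(I)} = \sqrt{I : x^\a}$ and $I_{\Delta_\a(J)} = \sqrt{J : x^\a} = \sqrt{(I : x^\a) + (x)}$, using $x \notin \supp\a$ so that $(x) : x^\a = (x)$. Now the hypothesis says precisely that $x$ appears in $\sqrt{I : x^\a}$, i.e. $x \in \sqrt{I : x^\a}$ (a variable dividing a minimal generator of a radical monomial ideal lies in that ideal). Hence $(x) \subseteq \sqrt{I:x^\a}$, so $\sqrt{(I:x^\a) + (x)} = \sqrt{I : x^\a}$, giving $I_{\Delta_\a(I)} = I_{\Delta_\a(J)}$ and therefore $\Delta_\a(I) = \Delta_\a(J)$ by the Stanley–Reisner correspondence. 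One should double-check the edge case where $x^\a \in J$: but since $(\a,i)$ is an extremal exponent of $I$ we have $x^\a \notin I$, and if $x^\a \in (x)$ then $x \in \supp\a$, contradicting the hypothesis; so $x^\a \notin J$ and $\Delta_\a(J)$ is non-void, matching $\Delta_\a(I)$.

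Having established $\Delta_\a(I) = \Delta_\a(J)$, the proof concludes in two lines: by Lemma~\ref{extremal_red}, $\reg I \le \reg J$; and since $J \supseteq I$, the inclusion together with the same degree-complex identity (or a direct monotonicity argument) gives $\reg J \le \reg I$. Combining, $\reg I = \reg(I,x)$.

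The main obstacle I anticipate is purely bookkeeping rather than conceptual: one must be careful with the interaction between the colon operation and the extra variable $x$, specifically verifying $(x) : x^\a = (x)$ under the hypothesis $x \notin \supp\a$, and confirming that ``$x$ appears in $\sqrt{I:x^\a}$'' is correctly read as $x \in \sqrt{I:x^\a}$ (true because the ideal is a radical monomial ideal, hence generated by squarefree monomials, and a variable appearing in any generator must itself be a generator). Once these small points are pinned down, the equality of degree complexes — and hence of regularities — is immediate from Lemma~\ref{Key1} and Lemma~\ref{extremal_red}.
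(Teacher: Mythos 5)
Your argument for the essential direction, $\reg I \le \reg(I,x)$, is exactly the paper's: since $x \notin \supp\a$ one has $\sqrt{(I,x):x^\a} = \sqrt{I:x^\a} + (x) = \sqrt{I:x^\a}$ (the last equality because $x \in \sqrt{I:x^\a}$), hence $\Delta_\a(I) = \Delta_\a((I,x))$ by Lemma~\ref{Key1}, and the extremal exponent $(\a,i)$ of $I$ forces $\reg(I,x) \ge |\a|+i = \reg I$ by Lemma~\ref{Key0}/Lemma~\ref{extremal_red}. That part is correct and complete.

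The gap is in the other inequality, $\reg(I,x) \le \reg I$. You assert this is ``automatic'' from $I \subseteq (I,x)$, with the fallback of ``the same degree-complex identity or a direct monotonicity argument.'' Regularity is not monotone under inclusion of ideals: for instance $(x^2) \subseteq (x^2, xy^5)$ while $\reg(x^2)=2 < 6 = \reg(x^2,xy^5)$, so the containment alone proves nothing. Nor does the degree-complex identity help in that direction: you only know $\Delta_\a(I)=\Delta_\a((I,x))$ at the single exponent $\a$, which is extremal for $I$; to run Lemma~\ref{extremal_red} the other way you would need the equality at an extremal exponent of $(I,x)$ (or, via its second statement, at all exponents $\b$ with $x^\b\notin(I,x)$), and that fails in general since $x$ need not lie in $\sqrt{I:x^\b}$ for other $\b$. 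The inequality you need is true, but it is a genuine result about monomial ideals (adding a variable to a monomial ideal does not increase regularity), and the paper simply cites it as \cite[Proposition 3.11]{NV2}; your proof should do the same or prove it, rather than appeal to monotonicity. A further small point: your parenthetical claim that in a radical monomial ideal ``a variable appearing in any generator must itself be a generator'' is false (take $(xy)$); the hypothesis of Lemma~\ref{red0} should simply be read as $x \in \sqrt{I:x^\a}$, which is how it is used throughout the paper, and then your computation goes through unchanged.
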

\begin{proof}
    Let $J = (I,x)$. Then, we know that $\reg J \le \reg I$ (see, for example \cite[Proposition 3.11]{NV2}). Thus, it suffices to prove the reverse inequalities. As $x$ does not belong to support of $\a$, then
    $$\sqrt{J:x^\a}=\sqrt{I:x^\a}+(x)=\sqrt{I:x^\a}.$$
By Lemma \ref{Key1}, $\Delta_\a(I) = \Delta_\a(J)$. The conclusion follows from Lemma \ref{Key0}. 
\end{proof}

Since we will deal with radical ideals of the colon ideals of monomial ideals, the following simple observation will be useful later on.

\begin{lem}\label{radical_colon} Let $I$ be a monomial ideal in $S$ generated by the monomials $f_1, ..., f_s$ and $\a \in \NN^n$. Then $\sqrt{I:x^\a}$ is generated by $\sqrt{f_1/\gcd(f_1, x^\a)}, ..., \sqrt{f_s/\gcd(f_s,x^\a)}$, where $\sqrt{x^\b}=\prod_{i\in\supp \b}x_i$ for each $\b\in\NN^n$.
\end{lem}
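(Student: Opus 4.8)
The plan is to first compute the colon ideal $I:x^\a$ explicitly as a monomial ideal, and then describe its radical in terms of the squarefree parts of the generators.

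\emph{Step 1: the colon.} I would start from the standard description of the colon of a monomial ideal by a monomial. For a monomial $m\in S$, we have $m\in I:x^\a$ if and only if $mx^\a\in I$, which for a monomial ideal means $f_j\mid mx^\a$ for some $j$, and this holds precisely when $f_j/\gcd(f_j,x^\a)\mid m$. Since $I:x^\a$ is again a monomial ideal (it is generated by its monomials), this gives
$$I:x^\a=\left(\frac{f_1}{\gcd(f_1,x^\a)},\ldots,\frac{f_s}{\gcd(f_s,x^\a)}\right).$$
Write $g_j=f_j/\gcd(f_j,x^\a)$ for brevity.

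\emph{Step 2: the radical.} Next I would prove the elementary fact that for any monomial ideal $J=(g_1,\ldots,g_s)$ one has $\sqrt{J}=(\sqrt{g_1},\ldots,\sqrt{g_s})$, where $\sqrt{g_j}$ is the squarefree part $\prod_{i\in\supp g_j}x_i$ as in the statement. The inclusion $\sqrt{J}\supseteq(\sqrt{g_1},\ldots,\sqrt{g_s})$ is clear since a large power of $\sqrt{g_j}$ is divisible by $g_j$. For the reverse inclusion, $\sqrt{J}$ is a monomial ideal, so it suffices to test containment on a monomial $m\in\sqrt J$; then $m^N\in J$ for some $N$, hence $g_j\mid m^N$ for some $j$, and comparing supports forces $\supp g_j\subseteq\supp m$, i.e. $\sqrt{g_j}\mid m$, so $m\in(\sqrt{g_1},\ldots,\sqrt{g_s})$.

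Combining the two steps with $\sqrt{g_j}=\sqrt{f_j/\gcd(f_j,x^\a)}$ gives the asserted generating set for $\sqrt{I:x^\a}$. There is no genuine obstacle here; the only point that warrants a line of justification is the claim in Step~2 that the radical of a monomial ideal is a monomial ideal generated by the squarefree parts of any monomial generating set, and this reduces to the support-comparison argument indicated above.
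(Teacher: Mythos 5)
Your proof is correct and follows essentially the same divisibility-and-support argument as the paper: the paper takes a minimal generator $g$ of $\sqrt{I:x^\a}$, observes that $g^t x^\a \in I$ forces some $f_j/\gcd(f_j,x^\a)$ to divide $g^t$, and compares supports, which is exactly your Steps 1 and 2 compressed into a single computation. Your version is merely more modular, spelling out the intermediate description of $I:x^\a$ and the easy reverse inclusion that the paper leaves implicit.
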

\begin{proof} Let $g$ be a minimal generator of $\sqrt{I:x^\a}$. Then there exists a natural number $t >0$ such that $g^tx^\a \in I.$ We may assume that $f_1 | g^tx^\a.$ In particular, $f_1/\gcd(f_1,x^\a) |g^t.$ Taking radical, we deduce that $\sqrt{f_1/\gcd(f_1,x^\a)} | g$. This concludes our proof.
\end{proof}

\section{Proof of Theorem \ref{main_thm} for $s = 2$}\label{sec_2}

Let $G$ be a simple graph with vertex set $[n]$ and edge set $E(G)$. Let $I=I(G)$ be the edge ideal of $G$. In this section, we will prove Theorem \ref{main_thm} for $s = 2$. First, we give a property of the degree complexes of the second symbolic/ordinary power of $I$. 
 
\begin{lem}\label{Key2}
Let $I = I(G)$ and $\a\in\NN^n$ such that $x^\a\notin I^{(2)}$. Then, 
$$\sqrt{I^{(2)}:x^\a}=\sqrt{I^2:x^\a}.$$
In particular, $\D_\a(I^{(2)})=\D_\a(I^{2})$. 
\end{lem}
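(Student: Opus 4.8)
The plan is to use the expansion formula $I^{(2)} = I^2 + J_1(G)$ from Theorem~\ref{expansion_second}, so it suffices to show that $\sqrt{J_1(G):x^\a} \subseteq \sqrt{I^2:x^\a}$ whenever $x^\a \notin I^{(2)}$; the reverse containment $\sqrt{I^2:x^\a} \subseteq \sqrt{I^{(2)}:x^\a}$ is immediate from $I^2 \subseteq I^{(2)}$. By Lemma~\ref{radical_colon}, the radical $\sqrt{J_1(G):x^\a}$ is generated by the squarefree monomials $\sqrt{x_ix_jx_r/\gcd(x_ix_jx_r,x^\a)}$ as $\{i,j,r\}$ ranges over triangles of $G$. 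Fix such a triangle $\{i,j,r\}$. Since $x^\a \notin I^{(2)}$ and $x_ix_jx_r \in J_1(G) \subseteq I^{(2)}$, the monomial $x_ix_jx_r$ cannot divide $x^\a$, so at least one of $a_i, a_j, a_r$ equals... more carefully: we need that not all three exponents are large enough to ``absorb'' the generator, i.e. at least one of $a_i,a_j,a_r$ is $0$, or at least two of them are $0$. Let me instead organize by how many of $a_i,a_j,a_r$ are positive.

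The key step is the following case analysis on the restriction of $\a$ to $\{i,j,r\}$. If $x^\a \notin I^{(2)}$, I first claim $\a$ cannot be positive on all of $\{i,j,r\}$ with, say, $a_i \ge 2$: in that case $x_i^2 x_j x_r = x_i \cdot x_ix_jx_r$ divides $x^\a$... but actually $x_ix_jx_r$ alone already lies in $I^{(2)}$, so in fact $\{i,j,r\} \not\subseteq \supp\a$ is forced only partially — we need that $x_ix_jx_r \nmid x^\a$, hence at least one of $a_i,a_j,a_r$ is zero. Say $a_r = 0$. Then the generator of $\sqrt{J_1(G):x^\a}$ coming from this triangle is $\sqrt{x_r \cdot (x_ix_j/\gcd(x_ix_j,x^\a))}$, which is divisible by $x_r$ and by $\sqrt{x_ix_j/\gcd(x_ix_j,x^\a)}$; but the latter is precisely the generator of $\sqrt{I:x^\a}$ coming from the edge $\{i,j\}$. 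Now I split further: if a second exponent among $a_i,a_j$ also vanishes, say $a_j = 0$, the triangle contributes $x_jx_r$ (up to the support of $x_i$), and I must show $x_jx_r \in \sqrt{I^2:x^\a}$ — equivalently $\{j,r\} \notin \D_\a(I^2)$, i.e. for every edge $e$ of $G$, some vertex $v \notin \{j,r\}$ has $a_v$ strictly less than the relevant exponent of the square. Here I use that $x^\a \notin I^{(2)} \supseteq I^2$, so $\D_\a(I^2)$ is nonvoid, together with the fact that $\{i,j,r\}$ being a triangle forces $x_i x_j, x_i x_r \in I$, and run a short argument that $\{j,r\}$ together with the edge $\{i,j\}$ (or $\{i,r\}$) produces a monomial in $I^2$ supported on $\{i,j,r\}$ divided by $x^\a$ — using $a_j = a_r = 0$. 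The remaining case is that exactly one of $a_i,a_j,a_r$ vanishes, say $a_r=0$ and $a_i,a_j \ge 1$; then I showed the triangle's generator is $x_r$ times the edge-$\{i,j\}$ generator of $\sqrt{I:x^\a}$, and since $\sqrt{I:x^\a} \subseteq \sqrt{I^2:x^\a}$ is false in general, I instead observe that $\{i,j\} \subseteq \supp\a$ combined with $x_ix_jx_r \nmid x^\a$ means $x_i x_j x_r$ fails to be in $I^{(2)}$-closure only because of $x_r$, and the desired generator $x_r \cdot \sqrt{x_ix_j/\gcd(x_ix_j,x^\a)}$ — note $\gcd(x_ix_j,x^\a) = x_ix_j$ here so this is just $x_r$ — must be shown to lie in $\sqrt{I^2:x^\a}$, i.e. $x_r \in \sqrt{I^2:x^\a}$. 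For this I use that $x_i^{a_i}x_j^{a_j}$ with $a_i,a_j\ge 1$ is ``almost'' a square once multiplied by $x_ix_j$: indeed $(x_ix_j)\cdot x^\a$ need not be in $I^2$, so I will instead argue that $x_r \cdot x^\a$ together with a high power: $(x_r x^\a)^2$ is divisible by $(x_ix_r)(x_jx_r)$ provided $a_i,a_j\ge 1$, which holds — giving $x_r \in \sqrt{I^2:x^\a}$ exactly when $a_i \ge 1$ and $a_j\ge 1$. This is the crux and it works precisely because two of the three vertices carry positive exponent.

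Assembling: in every case each generator of $\sqrt{J_1(G):x^\a}$ lies in $\sqrt{I^2:x^\a}$, so $\sqrt{I^{(2)}:x^\a} = \sqrt{I^2 + J_1(G) : x^\a} = \sqrt{I^2:x^\a} + \sqrt{J_1(G):x^\a} = \sqrt{I^2:x^\a}$ (using that the radical of a sum is the radical of the sum of radicals for monomial ideals, which follows from Lemma~\ref{Key1} and Lemma~\ref{cone}(2)). The statement $\D_\a(I^{(2)}) = \D_\a(I^2)$ then follows from Lemma~\ref{Key1}, since $\D_\a(-)$ is recovered from the Stanley--Reisner complex of the radical colon ideal. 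The main obstacle I anticipate is the bookkeeping in the case where exactly one of $a_i,a_j,a_r$ is zero: one has to be careful that ``two positive exponents on a triangle'' genuinely suffices to place $x_r$ in $\sqrt{I^2:x^\a}$, and that this does not secretly require $x^\a \notin I^{(2)}$ to rule out some degenerate configuration — I expect the hypothesis $x^\a \notin I^{(2)}$ to be needed only to guarantee $\D_\a(I^2)$ is nonvoid and to forbid all three exponents being simultaneously large, everything else being a direct divisibility computation.
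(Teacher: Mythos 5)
Your proposal is correct and follows essentially the same route as the paper: reduce via $I^{(2)}=I^2+J_1(G)$ and Lemma \ref{radical_colon} to the triangle generators, handle the case of a single surviving vertex $x_r$ by $x_r^2x_ix_j=(x_ix_r)(x_jx_r)\in I^2$ with $x_ix_j\mid x^\a$, and use $x^\a\notin I^{(2)}$ only to force $x_ix_jx_r\nmid x^\a$. The only point where you overcomplicate matters is the case where the surviving generator has degree at least two: since it divides $x_1x_2x_3$ and contains two vertices of the triangle, it is divisible by an edge of the triangle and hence already lies in $I\subseteq\sqrt{I^2:x^\a}$, so no auxiliary argument about $\D_\a(I^2)$ is needed.
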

\begin{proof} 
By Theorem \ref{expansion_second}, it suffices to prove that if $f$ is a minimal squarefree monomial generator of $\sqrt{J_1(G):x^\a}$ then $f \in \sqrt{I^2:x^\a}$. By Lemma \ref{radical_colon}, we may assume that $f = x_1x_2x_3 / \gcd(x_1x_2x_3,x^\a)$, where $123$ is a triangle in $G$. Since $x^\a\notin I^{(2)}$, $\deg f \ge 1$. There are two cases. 

\smallskip
\noindent\textbf{Case 1:} $\deg f=1$. May assume that $f = x_1$. Thus $x_2x_3 | x^\a$. Now, $x_1^2x_2x_3\in I^2$, which implies that $f\in \sqrt{I^2:x^\a}$.

\smallskip
\noindent\textbf{Case 2:} $\deg f \ge 2$. Therefore, one of three monomials $x_1x_2,x_2x_3,x_1x_3$ will be a divisor of $f$. In particular, $f \in I\subseteq \sqrt{I^2:x^\a}$. 

By Lemma \ref{Key1}, we obtain $\D_\a(I^{(2)})=\D_\a(I^{2})$ for any $\a\in\NN^n$ such that $x^\a\notin I^{(2)}$.  
\end{proof}

We are now in a position to prove the main result of this section.

\begin{thm}\label{Main1} Let $G$ be a simple graph and $I$ its edge ideal. Then 
$$\reg(I^{(2)}) = \reg(I^2).$$
\end{thm}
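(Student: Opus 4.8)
The plan is to combine Lemma~\ref{Key2} with the reduction lemmas of Section~2, namely Lemma~\ref{extremal_red}, Lemma~\ref{Key1}, and Lemma~\ref{red0}. The two ideals $I^{(2)}$ and $I^2$ satisfy $I^2 \subseteq I^{(2)}$, so by the ``in particular'' clause of Lemma~\ref{extremal_red} it suffices to check that $\Delta_\a(I^{(2)}) = \Delta_\a(I^2)$ for every exponent $\a \in \NN^n$ with $x^\a \notin I^{(2)}$. But this is exactly the content of Lemma~\ref{Key2}: for such $\a$ we have $\sqrt{I^{(2)}:x^\a} = \sqrt{I^2:x^\a}$, hence by Lemma~\ref{Key1} the Stanley--Reisner ideals of the two degree complexes coincide, so the complexes themselves are equal. (Note that if $x^\a \in I^{(2)} \supseteq I^2$ then both degree complexes are void, so these exponents are irrelevant, and in any case they cannot carry an extremal exponent of $I^{(2)}$.)

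Concretely, I would argue as follows. Since $I^2 \subseteq I^{(2)}$, the inequality $\reg(I^2) \ge \reg(I^{(2)})$ is standard — for instance it follows from \cite[Proposition 3.11]{NV2} together with the observation that $I^{(2)} + \m^N$ and $I^2 + \m^N$ agree in low degrees, or more simply one applies Lemma~\ref{extremal_red} in the other direction once the degree complexes are shown equal. For the reverse inequality $\reg(I^{(2)}) \ge \reg(I^2)$, let $(\a, i)$ be an extremal exponent of $I^2$, so that $\reg(S/I^2) = |\a| + i$, $x^\a \notin I^2$, and there is a face $F \in \Delta_\a(I^2)$ with $F \cap \supp\a = \emptyset$ and $\h_{i-1}(\lk_{\Delta_\a(I^2)} F; K) \ne 0$. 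If $x^\a \in I^{(2)}$ then $\Delta_\a(I^2)$ would have to be compared with the void complex $\Delta_\a(I^{(2)})$, which is impossible since $\Delta_\a(I^2)$ is nonempty; so $x^\a \notin I^{(2)}$, and Lemma~\ref{Key2} gives $\Delta_\a(I^2) = \Delta_\a(I^{(2)})$. Then Lemma~\ref{Key0} applied to $I^{(2)}$ yields $\reg(S/I^{(2)}) \ge |\a| + i = \reg(S/I^2)$, and adding $1$ to both sides gives $\reg(I^{(2)}) \ge \reg(I^2)$. Combining the two inequalities finishes the proof.

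Since both directions are now essentially immediate from the preparatory lemmas, there is really no serious obstacle left in this theorem — all the work has been front-loaded into Theorem~\ref{expansion_second} (the expansion $I^{(2)} = I^2 + J_1(G)$) and Lemma~\ref{Key2} (the claim that passing to radicals of colons kills the extra triangle generators). If I wanted to be even more streamlined, I would simply invoke Lemma~\ref{extremal_red} once: $I^2 \subseteq I^{(2)}$ and, by Lemma~\ref{Key2}, $\Delta_\a(I^2) = \Delta_\a(I^{(2)})$ for all $\a$ with $x^\a \notin I^{(2)}$ (equivalently for all $\a$ with $\Delta_\a(I^{(2)})$ nonvoid), which by definition means $\Delta_\a(I^{(2)}) = \Delta_\a(I^2)$ for every $\a$ with $x^\a \notin I^{(2)}$; hence $\reg I^{(2)} \le \reg I^2$, and the reverse inclusion of ideals together with the same degree-complex identity gives the opposite inequality, so $\reg I^{(2)} = \reg I^2$.
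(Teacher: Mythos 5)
Your first inequality, $\reg I^{(2)} \le \reg I^2$, is correct and is exactly the paper's argument: an extremal exponent $(\a,i)$ of $I^{(2)}$ automatically satisfies $x^\a \notin I^{(2)}$, so Lemma~\ref{Key2} together with Lemma~\ref{Key1} gives $\Delta_\a(I^{(2)}) = \Delta_\a(I^2)$, and Lemma~\ref{extremal_red} applies.

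The reverse inequality, however, has a genuine gap, and it sits precisely where the paper has to work. You claim that an extremal exponent $(\a,i)$ of $I^2$ must satisfy $x^\a \notin I^{(2)}$, and your parenthetical remark asserts that ``if $x^\a \in I^{(2)} \supseteq I^2$ then both degree complexes are void.'' This reverses the containment: $I^2 \subseteq I^{(2)}$ means membership in $I^2$ implies membership in $I^{(2)}$, not the other way around. Since $I^{(2)} = I^2 + J_1(G)$ (Theorem~\ref{expansion_second}), any $\a$ with $x^\a$ divisible by a triangle monomial $x_1x_2x_3$ but $x^\a \notin I^2$ has $\Delta_\a(I^{(2)})$ void while $\Delta_\a(I^2)$ is not void; nothing prevents such an $\a$ from being extremal for $I^2$, and Lemma~\ref{Key2} simply does not apply to it. Your sentence ``which is impossible since $\Delta_\a(I^2)$ is nonempty; so $x^\a \notin I^{(2)}$'' is not an argument — it assumes the complexes must be compared and equal, which is exactly what fails in this case. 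The paper handles this case by induction on $|V(G)|$: reduce to $G$ connected without isolated vertices, note that since $n \ge 4$ the triangle has a neighbor $r$, that $r \notin \supp\a$ because $x^\a \notin I^2$, and that $x_r \in \sqrt{I^2 : x^\a}$; then Lemma~\ref{red0} gives $\reg(I^2) = \reg(I^2, x_r) = \reg(J^2)$ with $J = I(G - r)$, and the induction hypothesis together with Corollary~\ref{restriction_inq} yields $\reg(S/J^2) \le \reg(S/J^{(2)}) \le \reg(S/I^{(2)})$. This inductive reduction (and the use of \cite{HNTT,NV2} to reduce to the connected case, plus the small base cases) is the substantive content of the theorem and is entirely missing from your proposal.
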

\begin{proof} By Lemma \ref{extremal_red} and Lemma \ref{Key0}, $\reg I^{(2)} \le \reg I^2.$ 

Conversely, we prove by induction on $n = |V(G)|$ that $\reg(S/I^2) \le \reg (S/I^{(2)})$. By results of \cite{HNTT,NV2}, it suffices to consider the case $G$ is connected and has no isolated vertices. Moreover, one can see that if $n\le 3$ then $I$ is either $xy$ or $(xy,yz)$ or $(xy,yz,zx)$ which satisfy our assertion. Put $n\ge 4$. 

Let $(\a,i)\in\NN^n\times\NN$ be an extremal exponent of $I^2$. If $x^\a \notin I^{(2)}$, by Lemma \ref{Key2}, $\D_\a(I^{2})=\D_\a(I^{(2)})$. By Lemma \ref{extremal_red}, $\reg I^2 \le \reg (S/I^{(2)})$. If $x^\a$ is divisible by a triangle, say $x_1x_2x_3$ i.e. $12,23,13\in E(G)$.
Since $G$ is connected and $n\ge 4$, we have $N(\{1,2,3\})\ne \emptyset$. Let $r\in N(\{1,2,3\})$. One can see that $r\notin \supp \a$ by $x^\a \notin I^2$. It is clear that $x_r\in\sqrt{I^2:x^\a}$. Using Lemma \ref{red0}, we have $$\reg(I^2)=\reg(I^2,x_r)=\reg((I,x_r)^2,x_r).$$
Moreover, $\reg((I,x_r)^2,x_r)=\reg(J^2,x_r)$, where $J$ is the edge ideal of the restriction of $G$ to $V \setminus\{r\}$. It is noted that $x_r$ is a regular element of $S/J^2$ then $\reg(J^2,x_r)=\reg(J^2)$. By induction, $\reg(S/J^{2})\le \reg(S/J^{(2)})$. By Corollary \ref{restriction_inq}, $\reg(S/J^{(2)})\le \reg(S/I^{(2)})$ as required.
\end{proof}

\section{Proof of Theorem \ref{main_thm} for $s = 3$}\label{sec_3}

Let $G$ be a simple graph with vertex set $[n]$ and edge set $E(G)$. Let $I=I(G)$ be the edge ideal of $G$. In this section, we will prove Theorem \ref{main_thm} for $s = 3$. First, we prove a technical lemma for degree complexes of the third symbolic/ordinary power of $I$.

\begin{lem}\label{Key3} Let $\a \in \NN^n$ such that $x^\a \notin I^{(3)}$. Assume that $\sqrt{I^{(3)}:x^\a} \neq \sqrt{I^3:x^\a}$. Let $f$ be a minimal squarefree monomial generator $\sqrt{I^{(3)} : x^\a}$ such that $f \notin \sqrt{I^3:x^\a}$. Then we have
\begin{enumerate}
    \item There exists a triangle $123$ in $G$ such that $x_1x_2x_3 | x^\a.$
    \item $\deg f = 1$ and $\supp f \notin \supp \a$.
\end{enumerate}
\end{lem}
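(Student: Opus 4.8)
The plan is to use the expansion formula $I^{(3)} = I^3 + IJ_1(G) + J_2(G)$ from Theorem \ref{expansion_third}, together with the explicit generator description of radical colon ideals from Lemma \ref{radical_colon}. Since $\sqrt{I^3:x^\a}$ already accounts for the $I^3$ part, any minimal generator $f$ of $\sqrt{I^{(3)}:x^\a}$ not lying in $\sqrt{I^3:x^\a}$ must, up to taking radicals, come from dividing a generator of $IJ_1(G)$ or of $J_2(G)$ by $x^\a$; explicitly, $f = \sqrt{g/\gcd(g,x^\a)}$ where $g$ is one of: $x_ix_jx_rx_s$ with $\{i,j,r,s\}$ a clique of size $4$; $x_C$ with $C$ a $5$-cycle; or $(x_ax_b)(x_ix_jx_r)$ with $ab \in E(G)$ and $ijr$ a triangle. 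The strategy is to go through these cases and show that unless we are in the situation described by (1) and (2), the monomial $f$ already lies in $\sqrt{I^3:x^\a}$, giving a contradiction.

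The key steps, in order, would be: (i) Handle $g = x_ix_jx_rx_s$ from a $4$-clique. Note any product of two disjoint edges inside this clique, e.g. $x_ix_j \cdot x_rx_s$, lies in $I^2$, and $x_ix_jx_rx_s$ itself is then in $I^2 \cdot (\text{anything})$; more carefully, if $\gcd(g,x^\a)$ is small then $f$ retains an edge of the clique so $f \in I \subseteq \sqrt{I^3:x^\a}$ once we also use that $x^\a$ supplies an edge — and if $\gcd(g,x^\a)$ is large, then $x^\a$ is divisible by a product of two edges from the clique, hence $f$ together with $x^\a$ lands in $I^3$. The upshot is $f \in \sqrt{I^3:x^\a}$. (ii) Handle $g = x_C$ for a $5$-cycle $C = v_1v_2v_3v_4v_5$. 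A $5$-cycle contains two disjoint edges, and $x_C$ times any single vertex of $C$ is divisible by $x_{v_1}x_{v_2}\cdot x_{v_3}x_{v_4}\cdot x_{v_5} \cdot(\ldots)$ — one checks that for any way $\gcd(x_C,x^\a)$ can look, either $f$ contains a product reducing it into $I$, or $x^\a$ is divisible by two disjoint edges of $C$, putting $f \in \sqrt{I^3:x^\a}$. (iii) Handle $g = (x_ax_b)(x_ix_jx_r)$. Here $ab \in E(G)$ so $x_ax_b \in I$; if $f$ still retains the factor $\sqrt{x_ax_b}$ as an edge, or retains any edge of the triangle $ijr$, then $f \in I$. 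The only way to avoid this is that $\gcd(g,x^\a)$ swallows essentially all of $g$, which forces $x_ax_b | x^\a$ and at least two of $x_i,x_j,x_r$ divide $x^\a$; the triangle $ijr$ then plays the role of $123$ in (1), and $f$ is a single remaining variable, giving (2). One must also rule out $\deg f = 1$ with $\supp f \subseteq \supp\a$, since then $f \cdot x^\a$ would be divisible by $x_{\ell}^2$ for the relevant $\ell$ — combined with the edge/triangle structure this would land in $I^3$, contradiction; this is what forces $\supp f \not\subseteq \supp \a$.

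The main obstacle I expect is the bookkeeping in case (iii): showing precisely that the \emph{only} surviving possibility is $\deg f = 1$, and in that case pinning down that the triangle $123$ with $x_1x_2x_3 | x^\a$ exists. One has to be careful that $f$ is a \emph{minimal} generator of $\sqrt{I^{(3)}:x^\a}$ and \emph{not} in $\sqrt{I^3:x^\a}$, and use both conditions fully: minimality rules out $f$ being divisible by a smaller generator (in particular by an edge of $G$, which would force $f\in I\subseteq\sqrt{I^3:x^\a}$), while the second condition is what we contradict. A subtle point is that when dividing $g$ by $\gcd(g,x^\a)$ we only get a monomial; it is the radical $\sqrt{g/\gcd(g,x^\a)}$ that is a squarefree generator, so one must track which variables survive (those appearing in $g$ to higher power than in $x^\a$). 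I would organize the argument so that in each case the dichotomy is "$x^\a$ already contains two disjoint edges of the relevant clique/cycle" (giving $f \cdot x^{\a}$ divisible by something in $I^3$, hence $f \in \sqrt{I^3:x^\a}$) versus "$f$ still contains an edge of $G$" (giving $f \in I \subseteq \sqrt{I^3:x^\a}$), with the residual boundary case being exactly (1)--(2).
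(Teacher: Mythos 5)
Your overall plan coincides with the paper's proof: expand $I^{(3)}=I^3+IJ_1(G)+J_2(G)$, write $f=\sqrt{g/\gcd(g,x^\a)}$ for a generator $g$ of one of the three pieces via Lemma \ref{radical_colon}, and eliminate the $4$-clique and $5$-cycle cases by exhibiting explicit memberships such as $x_1^3x_2x_3x_4=(x_1x_2)(x_1x_3)(x_1x_4)\in I^3$ or $x_1^2x_3^2x_2x_4x_5=(x_1x_2)(x_3x_4)(x_1x_5)x_3\in I^3$ (your stated dichotomy ``$f$ retains an edge, or $x^\a$ contains two disjoint edges'' is not literally what happens — e.g.\ for $f=x_1x_3$ on a $5$-cycle the gcd $x_2x_4x_5$ contains only one edge — but the needed memberships are routine). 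The genuine gaps are in case (iii), which is where the content of the lemma lies. First, you only conclude that ``at least two of $x_i,x_j,x_r$ divide $x^\a$'', which does \emph{not} give conclusion (1): you must rule out that the single surviving variable is itself a triangle vertex. This requires the explicit computations: if $f=x_1$ and $1\in\{u,v\}$, then $x_vx_2x_3\mid x^\a$ and $(x_1x_v)(x_1x_2)(x_1x_3)\in I^3$; if $1\notin\{u,v\}$, then $x_ux_vx_2x_3\mid x^\a$ and $x_1^2x_ux_vx_2x_3=(x_ux_v)(x_1x_2)(x_1x_3)\in I^3$; either way $f\in\sqrt{I^3:x^\a}$, a contradiction, and only then are all three triangle variables absorbed into the gcd, giving $x_1x_2x_3\mid x^\a$. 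Similarly, the $\deg f=2$ subcase with one vertex from the edge and one nonadjacent vertex from the triangle (say $f=x_ux_1$) is not covered by ``$f$ retains an edge'' and must be killed by $f^2x^\a\supseteq x_u(x_ux_v)(x_1x_2)(x_1x_3)\in I^3$.

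Second, your proposed mechanism for (2) would fail. If $f=x_u$ with $u\notin\{1,2,3\}$ and $u\in\supp\a$, then $f^tx^\a$ is divisible by $x_u^{t+1}x_vx_1x_2x_3$, but when $u$ is adjacent to none of $1,2,3$ this monomial has matching number two and lies in no power $I^3$ for any $t$; so ``combined with the edge/triangle structure this lands in $I^3$'' is not a valid contradiction. The correct argument is pure exponent bookkeeping with the gcd: for $x_u$ to survive in $g/\gcd(g,x^\a)$ while $a_u\ge 1$ one needs $\deg_{x_u}g\ge 2$, i.e.\ $u\in\{1,2,3\}$, which was just excluded — this is how the paper forces $\supp f\not\subseteq\supp\a$. (Alternatively, $a_u\ge 1$ would force $g\mid x^\a$, hence $x^\a\in IJ_1(G)\subseteq I^{(3)}$, contradicting the hypothesis $x^\a\notin I^{(3)}$ — note the contradiction is with membership in $I^{(3)}$, not in $I^3$.) With these two points repaired, your argument becomes the paper's proof.
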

\begin{proof} By Theorem \ref{expansion_third} and Lemma \ref{radical_colon}, there are three cases as follows.

\smallskip
\noindent\textbf{Case 1:} There exists a clique of size $4$, $C=1234$, of $G$ such that 
$$f  = \sqrt{x_1 x_2x_3x_4 / \gcd(x_1x_2x_3x_4,x^\a)}.$$
If $\deg f \ge 2$ then $\supp (f)$ must contain at least two vertices $i,j$ among $\supp C$. In particular, $f \in I\subseteq \sqrt{I^3:x^\a},$ which is a contradiction. If $\deg f = 1$, say $f = x_1$. This implies that $x_2x_3x_4 | x^\a$. But, $f^3x^\a\in I^3$ by $x_1^3(x_2x_3x_4)=(x_1x_2)(x_1x_3)(x_1x_4)\in I^3$, which is a contradiction.  

\smallskip
\noindent\textbf{Case 2:} There exists a $5$-cycle, $C = 12345$, of $G$ such that 
$$f = \sqrt{x_1x_2x_3x_4x_5/ \gcd(x_1x_2x_3x_4x_5,x^\a)}.$$
Since $f \notin \sqrt{I^3:x^\a}$, $f \notin I$. Furthermore, $f | x_1x_2x_3x_4x_5$, we have three subcases.

Subcase 2.1. $\deg f = 3$. We may assume that $f = x_1x_3x_5$ then $x_2x_4 | x^\a$. In this case $f^2 x_2x_4 \in I^3$, which is a contradiction.

Subcase 2.2. $\deg f = 2$. We may assume that $f = x_1x_3$ then $x_2x_4x_5|x^\a$. In this case $f^2 x_2x_4x_5 \in I^3$, which is a contradiction.

Subcase 2.3 $\deg f = 1$. We may assume that $f = x_1$ then $x_2x_3x_4x_5|x^\a$. In this case $f^2 x_2x_3x_4x_5 \in I^3$, which is a contradiction. 

\smallskip
\noindent\textbf{Case 3:} There exists an edge $uv$ and a triangle $123$ in $G$ such that $$f=\sqrt{x_ux_v x_1x_2x_3/\gcd(x_ux_vx_1x_2x_3,x^\a)},$$
note that $u,v$ might belong to $\{1,2,3\}.$ In particular, $\supp (f) \subseteq \{u,v\} \cup \{1,2,3\}$. Since $f\notin \sqrt{I^3:x^\a}$, $f\notin I$. In particular, $|\supp (f) \cap \{u,v\}| \le 1$ and $|\supp (f) \cap \{1,2,3\} | \le 1.$ There are two subcases.

Subcase 3.1. $\deg f \ge 2$. Since $f$ is squarefree, $|\supp (f) \cap \{u,v\}| = 1$ and $|\supp (f) \cap \{1,2,3\}| = 1$. We may assume that $f = x_1x_u$. In particular $x_vx_2x_3$ is a divisor of $x^\a$. In this case, $f^2x^\a \in I^3$ by $ x_u^2x_1^2x_vx_2x_3=x_u(x_ux_v)(x_1x_2)(x_1x_3)\in I^3$, which is a contradiction.

Subcase 3.2. $\deg f = 1$. We first prove that $\supp f\notin \{1,2,3\}$. Assume by contradiction that $\supp f \in \{1,2,3\}$. We may assume that $f = x_1$. 
\begin{enumerate} 
\item If $x_1\in\{x_u,x_v\}$. Assume $x_1=x_u$. It implies that $x_vx_2x_3$ is a divisor of $x^\a$. By $(x_1x_v)(x_1x_2)(x_1x_3)\in I^3$, we have $x_1\in \sqrt{I^3:x^\a}$.
\item If $x_1\notin\{x_u,x_v\}$. Then $x_ux_vx_2x_3$ is a divisor of $x^\a$. Since $x_1^2(x_ux_vx_2x_3)=(x_ux_v)(x_1x_2)(x_1x_3)\in I^3$, $x_1\in \sqrt{I^3:x^\a}$.
\end{enumerate}
This is a contradiction. Thus $\supp f \notin \{1,2,3\}$. Therefore, $x_1x_2x_3 | x^\a$. Furthermore, $\supp f \in \{u,v\}$. We may assume that $f = x_u$. If $\supp f \in \supp \a$, then $x_u^2 | x_ux_vx_1x_2x_3$. Thus $u = \supp f \in \{1,2,3\}$, which is a contradiction. This completes our prooof.
\end{proof}

\begin{exm} One might hope that in general we have for $\a \in \NN^n$ and $x^\a \notin I^{(s)}$ then 
$$\sqrt{I^{(s)} : x^\a} = \sqrt{I^s : x^\a} + (\text{variables}).$$
Unfortunately, this is not the case for $s \ge 4.$ Indeed, let 
$$I = (x_1x_2,x_2x_3,x_3x_1,x_1x_4,x_4x_5,x_2x_6,x_6x_7)\text{ and }x^\a = x_1x_2x_3x_4x_6,$$
 then $x_5x_7$ is a minimal generator of $\sqrt{I^{(4)} : x^\a}$ but does not belong to $\sqrt{I^4:x^\a}$.
\end{exm}

We are now in a position to prove the first inequality of the main result of this section.

\begin{thm}\label{thirdpower_in} Let $G$ be a simple graph and $I=I(G)$. Then
$$\reg(I^{(3)})\le \reg(I^3).$$
\end{thm}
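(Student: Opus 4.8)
The plan is to prove $\reg(I^{(3)}) \le \reg(I^3)$ by induction on $n = |V(G)|$, comparing the degree complexes of $I^{(3)}$ and $I^3$ at an extremal exponent of $I^{(3)}$ and using Lemma \ref{Key3} to pin down exactly how these complexes can differ. After discarding isolated vertices by the standard reductions of \cite{HNTT, NV2}, I would fix an extremal exponent $(\a, i) \in \NN^n \times \NN$ of $I^{(3)}$, so that $\reg(I^{(3)}) = |\a| + i$ and there is a face $F \in \D_\a(I^{(3)})$ with $F \cap \supp \a = \emptyset$ and $\h_{i-1}(\lk_{\D_\a(I^{(3)})} F; K) \ne 0$. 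The base case $n \le 2$ is trivial, since then $I^{(3)} = I^3$.

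If $\D_\a(I^{(3)}) = \D_\a(I^3)$ --- equivalently $\sqrt{I^{(3)} : x^\a} = \sqrt{I^3 : x^\a}$ by Lemma \ref{Key1} --- then the very same face $F$ and homology class witness, through Lemma \ref{Key0} applied to $I^3$, that $\reg(I^3) \ge |\a| + i = \reg(I^{(3)})$; this is precisely Lemma \ref{extremal_red} with $I \mapsto I^{(3)}$ and $J \mapsto I^3$, and we are done in this case. Note that when $G$ is triangle-free Lemma \ref{Key3} rules out any discrepancy, so this easy case always applies; the substance is therefore concentrated in graphs containing triangles.

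In the remaining case $\sqrt{I^{(3)} : x^\a} \ne \sqrt{I^3 : x^\a}$, I would invoke Lemma \ref{Key3}: every minimal squarefree generator $f$ of $\sqrt{I^{(3)} : x^\a}$ that is not in $\sqrt{I^3 : x^\a}$ is a single variable $f = x_u$ with $u \notin \supp \a$. Since $x_u \in \sqrt{I^{(3)} : x^\a}$ and $u \notin \supp \a$, Lemma \ref{red0} applied to the ideal $I^{(3)}$ gives $\reg(I^{(3)}) = \reg(I^{(3)}, x_u)$. I would then pass to the deletion $J := I(G - u)$. Expanding via Theorem \ref{expansion_third}, each generator of $I^{(3)} = I^3 + I J_1(G) + J_2(G)$ is either divisible by $x_u$ or avoids $u$ altogether; in the latter case it is a generator of the corresponding piece of $J^{(3)} = J^3 + J J_1(G-u) + J_2(G-u)$, because the triangles, $4$-cliques, and $5$-cycles of $G$ missing $u$ are exactly those of $G - u$. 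Hence $(I^{(3)}, x_u) = (J^{(3)}, x_u)$, and since $x_u$ is a nonzerodivisor on $S/J^{(3)}$ we obtain $\reg(I^{(3)}) = \reg(J^{(3)}, x_u) = \reg(J^{(3)})$.

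To close the induction, $G - u$ has $n-1$ vertices, so the inductive hypothesis yields $\reg(J^{(3)}) \le \reg(J^3)$, and it remains to compare $\reg(J^3)$ with $\reg(I^3)$. This is the restriction inequality $\reg(I(G-u)^3) \le \reg(I(G)^3)$ for ordinary powers, which I would establish exactly as in Corollary \ref{restriction_inq}: for $V = [n] \setminus \{u\}$ one checks $(I^3)_V = J^3$ (a monomial of $I^3$ supported on $V$ is divisible by three edges, all avoiding $u$, hence lying in $J$), so that $I^3 + (x_u) = J^3 + (x_u)$, and then one applies \cite[Proposition 3.11]{NV2} together with the regularity of $x_u$ on $S/J^3$. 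Chaining the estimates gives $\reg(I^{(3)}) = \reg(J^{(3)}) \le \reg(J^3) \le \reg(I^3)$, as desired. I expect the crux to be the second case: the key conceptual point, already delivered by Lemma \ref{Key3}, is that the two degree complexes can differ only through free variables $x_u$ with $u \notin \supp \a$, which lets me sidestep the homological subtlety (deleting a vertex of $\D_\a(I^3)$ could a priori alter the relevant reduced homology) by reasoning entirely at the level of regularities of ideals via Lemma \ref{red0}, and then verify that vertex deletion is compatible with both the third symbolic and the third ordinary power.
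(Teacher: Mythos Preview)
Your proposal is correct and follows essentially the same approach as the paper's own proof: induct on $n$, use Lemma~\ref{extremal_red} when the degree complexes agree, and otherwise use Lemma~\ref{Key3} to produce a variable $x_u$ with $u\notin\supp\a$, invoke Lemma~\ref{red0}, and reduce to the deletion $G-u$ via the restriction identities for both $I^{(3)}$ and $I^3$. You spell out a few steps (such as $(I^{(3)},x_u)=(J^{(3)},x_u)$ and $\reg(J^3)\le\reg(I^3)$) in more detail than the paper, but the logical skeleton is identical.
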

\begin{proof} We prove by induction on $n = |V(G)|$. Let $(\a,i)$ be an extremal exponent of $I^{(3)}$. By Lemma \ref{extremal_red}, we may assume that $\Delta_\a(I^{(3)}) \neq \Delta_\a(I^3)$. By Lemma \ref{Key1} and Lemma \ref{Key3}, there exists a variable, $x_t \in \sqrt{I^{(3)}:x^\a}$ such that $x_t \notin \supp x^\a$. By Lemma \ref{red0}, $\reg I^{(3)} = \reg (I^{(3)},x_t).$ Let $J$ be the restriction of $I$ to $V(G) \setminus \{t\}$. Then by induction $\reg J^{(3)} \le \reg J^3$. Thus,
$$\reg I^{(3)} = \reg (J^{(3)},x_t) \le \reg (J^3,x_t) \le \reg I^3,$$
where the last inequality follows from \cite[Proposition 3.11]{NV2}.
\end{proof}

To prove the reverse inequality $\reg(S/I^3) \le \reg (S/I^{(3)})$, we also use induction on $n=|V(G)|$. By results of \cite{HNTT,NV2}, it suffices to consider the case $G$ is connected and has no isolated vertices. 

Moreover, one can see that if $n\le 3$ then $I$ is either $xy$ or $(xy,yz)$ or $(xy,yz,zx)$ which satisfy our assertion.  

For simplicity of exposition, throughout the rest of this section, we always assume that $(\a,i)\in\NN^n\times\NN$ is an extremal exponent of $I^3$ and $n\ge 4$, where $I=I(G)$. It is clear that $x^\a\notin I^3$. Then, we will fix a face $F\in \D_{\a}(I^3)$ such that $F\cap\supp(\a)=\emptyset$ and $\h_{i-1}(\lk_{\D_{\a}(I^3)} F;K)\ne 0$. By Lemma \ref{extremal_red}, it suffices to consider the cases $x^\a\notin I^{(3)}$ with $\D_\a(I^{(3)})\ne\D_\a(I^{3})$ or $x^\a\in I^{(3)}$.

We will need a series of lemmas for each form of $\a$. 

\begin{lem}\label{red1} If $x^\a\in J_2(G)$ and it is divisible by $x_C$, where $C$ is a $5$-cycle of $G$, then $\reg I^3 \le\reg I^{(3)}.$
\end{lem}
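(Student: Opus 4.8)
The plan is to produce, from the extremality hypothesis and the presence of the $5$-cycle, a vertex $r$ outside $\supp(\a)$ whose variable lies in $\sqrt{I^3:x^\a}$, and then delete $r$, combining Lemma \ref{red0}, the inductive hypothesis on $n=|V(G)|$, and Corollary \ref{restriction_inq}. Write $C=12345$ for the given $5$-cycle, so $x_1x_2x_3x_4x_5\mid x^\a$, and put $x^{\a'}=x^\a/(x_1\cdots x_5)$, so $\supp(\a')\cap\{1,\dots,5\}=\emptyset$. Since $(\a,i)$ is extremal for $I^3$ we have $x^\a\notin I^3$, and the first task is to extract three structural facts. First, $a_j=1$ for every $j\in\{1,\dots,5\}$: otherwise, say $a_1\ge 2$, then $x_1^2x_2x_3x_4x_5=(x_1x_2)(x_1x_5)(x_3x_4)\mid x^\a$ lies in $I^3$. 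Second, $\supp(\a')$ is an independent set of $G$: an edge $uv$ with $u,v\in\supp(\a')$ would give $(x_ux_v)(x_1x_2)(x_3x_4)\mid x^\a$ in $I^3$. Third, no edge of $G$ joins a vertex of $C$ to a vertex of $\supp(\a')$: an edge $1u$ with $u\in\supp(\a')$ would give $(x_1x_u)(x_2x_3)(x_4x_5)\mid x^\a$ in $I^3$. From the definition of $\a'$ these facts yield $\supp(\a)=\{1,\dots,5\}\sqcup\supp(\a')$.

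Next I would dispose of the degenerate case $V(G)=\{1,\dots,5\}$. Here $\supp(\a)=V(G)$, and since $a_j=1$ we have $x_j\cdot x^\a=x_j^2\prod_{k\ne j}x_k\in I^3$ for each $j\in\{1,\dots,5\}$ (it is a product of three edges of $C$), so $\sqrt{I^3:x^\a}=\m$; by Lemma \ref{Key1}, $\D_\a(I^3)=\{\emptyset\}$, which is non-void because $x^\a\notin I^3$. Then the only face $F\in\D_\a(I^3)$ with $F\cap\supp(\a)=\emptyset$ is $F=\emptyset$ and $\h_{i-1}(\lk_{\D_\a(I^3)}\emptyset;K)=\h_{i-1}(\{\emptyset\};K)=0$, contradicting that $(\a,i)$ is extremal. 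Hence $V(G)\supsetneq\{1,\dots,5\}$, and since $G$ is connected there is an edge $\{j,r\}$ with $j\in C$ and $r\notin C$. By the third fact $r\notin\supp(\a')$, so by the displayed decomposition $r\notin\supp(\a)$. Also $x_r\in I^3:x^\a$: the path obtained by deleting $j$ from $C$ has a perfect matching into two edges $e,e'$, and $(x_rx_j)\,x_e\,x_{e'}$ is a product of three edges of $G$ dividing $x_rx^\a$.

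It then remains to run the deletion. As $x_r$ appears in $\sqrt{I^3:x^\a}$ and $r\notin\supp(\a)$, Lemma \ref{red0} gives $\reg(I^3)=\reg(I^3,x_r)$. With $J=I(G-r)$, the minimal generators of $I^3$ not divisible by $x_r$ are precisely those of $J^3$, so $(I^3,x_r)=(J^3,x_r)$, and since $x_r$ is a nonzerodivisor modulo $J^3$ we get $\reg(I^3,x_r)=\reg(J^3)$. The inductive hypothesis applied to $G-r$ gives $\reg(J^3)\le\reg(J^{(3)})$, and since $J=I(G)_{V(G)\setminus\{r\}}$ is a restriction of $I$, Corollary \ref{restriction_inq} gives $\reg(J^{(3)})\le\reg(I^{(3)})$. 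Chaining, $\reg(I^3)\le\reg(I^{(3)})$, as claimed. I expect the only genuine work to be in establishing the three structural facts above; once they are in hand the degenerate case and the vertex-deletion reduction are purely formal. (One should also note that the inductive hypothesis legitimately applies to $G-r$ even if it is disconnected or has isolated vertices, which is covered by the reductions of \cite{HNTT,NV2} already in force.)
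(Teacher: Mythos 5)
Your main line of argument --- extracting the structural facts about $\a$, producing a neighbour $r$ of the cycle with $r\notin\supp\a$ and $x_r\in\sqrt{I^3:x^\a}$, and then deleting $r$ via Lemma \ref{red0}, the induction on $|V(G)|$ and Corollary \ref{restriction_inq} --- is exactly the paper's argument and is correct. The genuine problem is your treatment of the degenerate case $V(G)=\{1,\dots,5\}$. There you argue that $\D_\a(I^3)=\{\emptyset\}$ forces $\h_{i-1}(\lk_{\D_\a(I^3)}\emptyset;K)=\h_{i-1}(\{\emptyset\};K)=0$ and hence a contradiction with extremality. But the complex $\{\emptyset\}$ consisting of the empty face alone has $\h_{-1}(\{\emptyset\};K)=K\neq 0$, so for $i=0$ there is no contradiction; the correct conclusion is only that $i=0$ is forced. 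Moreover this case really occurs: for $G=C_5$ and $x^\a=x_1x_2x_3x_4x_5$ one has $\reg(S/I^3)=5=|\a|+0$, so $(\a,0)$ is an extremal exponent of $I^3$, and your argument would wrongly exclude it.

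Consequently your proposal does not prove the lemma when $V(G)=\{1,\dots,5\}$: in that situation what is needed is the inequality $\reg(S/I^3)=|\a|+0=5\le\reg(S/I^{(3)})$, i.e. a lower bound on the regularity of the third symbolic power of a $5$-vertex graph containing a $5$-cycle. This is precisely how the paper disposes of the case $n=5$ (it deduces $i=0$ and then invokes $5\le \reg(S/I^{(3)})$), whereas your proposal supplies no argument at all for that bound, having dismissed the case by the erroneous vanishing claim. Everything else in your write-up (the facts $a_j=1$ on $C$, independence of $\supp(\a')$, absence of edges from $C$ to $\supp(\a')$, the existence and choice of $r$, and the deletion/induction/restriction chain) matches the paper's proof and is fine; only the degenerate case needs to be repaired.
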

\begin{proof} Without loss of generality, we may assume that $x^\a = x_1 x_2 x_3 x_4 x_5\cdot f$, where $C=12345$ is a $5$-cycle of $G$ and a monomial $f\in S$. Since $x^\a\notin I^3$, $\supp (f)\cap N(C)=\emptyset$ and $\supp(f)$ is an independent set of $G$. If $n\ge 6$, by the connected property of $G$,  there exists a neighbor $r$ of $C$  and it does not belong to $\supp\a$. In particular, $x_r\in \sqrt{I^3:x^\a}$. By Lemma \ref{red0}, we have
    $$\reg I^3 = \reg (I^3,x_r) = \reg ((I,x_r)^3,x_r).$$
Similarly to the proof of Theorem \ref{Main1}, by induction and Corollary \ref{restriction_inq}, we have
            $$\reg ((I,x_r)^3,x_r)=\reg(J^3,x_r)=\reg(J^3)\le \reg (J^{(3)})\le \reg I^{(3)}.$$ 

If $n = 5$, then $x^\a = x_1x_2x_3x_4x_5$ and $\Delta_a(I^3) = \{\emptyset\}$. In particular, $ i = 0$, thus $\reg S/I^3 = |\a| + i = 5 \le \reg S/I^{(3)}.$
\end{proof}

In the next three lemma, we will use the following claim which is easy to see from Lemma \ref{cone}.

\smallskip
\noindent\textbf{Claim A:} Let $G$ be a connected simple graph and $I$ be its edge ideal. Let $r\in V(G)$ and $J$ be a squarefree monomial ideal such that $x_r$ does not appear in any of its minimal squarefree monomial generators. If $x_u\in J$ for all $u\in N_G(r)$ then the simplicial complex of $I+J$ is a cone over $r$.

\begin{lem}\label{red2} If $x^\a\in IJ_1(G)$ then $\reg(I^3) \le \reg(I^{(3)}).$
\end{lem}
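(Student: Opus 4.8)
The plan is to mimic the strategy of Lemma \ref{red1}: exhibit a variable lying in $\sqrt{I^3 : x^\a}$ that is outside $\supp\a$, peel it off via Lemma \ref{red0}, and then close the induction with Corollary \ref{restriction_inq}. So suppose $x^\a \in IJ_1(G)$; write $x^\a = x_ax_b\, x_1x_2x_3 \cdot f$ where $ab \in E(G)$, $123$ is a triangle of $G$, and $f$ is a monomial. Since $x^\a \notin I^3$, the support of $f$ is an independent set disjoint from $N(\{a,b,1,2,3\})$, and moreover the configuration $\{a,b\}\cup\{1,2,3\}$ cannot itself "contain" a third edge disjoint from the two we have used — that is, after accounting for overlaps among $\{a,b\}$ and $\{1,2,3\}$, the induced graph on these vertices has matching number at most $2$. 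The first step is therefore a short case analysis of how $\{a,b\}$ meets $\{1,2,3\}$ (disjoint; sharing one vertex; $\{a,b\}\subseteq\{1,2,3\}$), in each case pinning down exactly which vertices of $U := \supp(x_ax_bx_1x_2x_3)$ are forced to appear in $\supp\a$.

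The second step is to produce the peeling variable. Because $G$ is connected and $n \ge 4$, either $U \subsetneq V(G)$, in which case $N(U)\ne\emptyset$ and I pick $r \in N(U)$; or $U = V(G)$ and I must argue differently. If $r \in N(U)$ exists, then $r \notin \supp\a$ (otherwise $x^\a$ would be divisible by $x_r$ together with an edge from $U$, plus the triangle, plus one more edge from $U\setminus\{\text{that edge}\}$, giving $x^\a \in I^3$ — here one uses that the induced graph on $U$ has matching number $2$, supplying two disjoint edges, and $x_rx_s$ with $s \in U$ supplies a third), and clearly $x_r \in \sqrt{I^3:x^\a}$ since $x_r x_s \cdot (x_1x_2)(x_1x_3) \cdot \ldots$ lands in $I^3$ for the appropriate $s$. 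Then Lemma \ref{red0} gives $\reg I^3 = \reg(I^3,x_r) = \reg((I,x_r)^3,x_r) = \reg(J^3)$ where $J$ is the edge ideal of $G - r$; induction on $n$ yields $\reg(S/J^3) \le \reg(S/J^{(3)})$, and Corollary \ref{restriction_inq} gives $\reg(S/J^{(3)}) \le \reg(S/I^{(3)})$, finishing this case exactly as in Lemma \ref{red1}.

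The remaining step handles $U = V(G)$, i.e. $\supp(x^\a)$ together with the independent set $\supp(f)$ already exhausts the vertex set. Here the degree complex $\Delta_\a(I^3)$ is essentially forced to be small: the vertices of $U$ involved in edges are in $\supp\a$, so $F$ and its links live on $\supp(f)$, which is independent and disjoint from $N(U)$; one computes directly that $\h_{i-1}(\lk_{\Delta_\a(I^3)}F;K)\ne 0$ forces $i$ small and $|\a|$ bounded, so that $\reg S/I^3 = |\a|+i$ is controlled — and then one checks $x^\a \in I^{(3)}$ as well with the same or larger degree contribution, or invokes that $\reg S/I^{(3)} \ge n-1 \ge$ the relevant bound via a cruder estimate (e.g. $\reg S/I^{(3)} \ge \reg S/I \ge$ the number of vertices in a suitable induced matching, or simply $\reg I^{(3)} \ge \reg I^3$ fails to be available so one argues with the explicit small complex). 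I expect this boundary case $U = V(G)$ to be the main obstacle: unlike in Lemma \ref{red1}, where $n=5$ pinned everything down, here $|U|$ can be as large as $5$ but $n$ is only constrained by $n \ge 4$, so one must either rule out $U = V(G)$ using the matching-number-$2$ restriction on the induced subgraph on $U$ (forcing $\supp f \ne \emptyset$ hence $N(U)\ne\emptyset$ after all, unless $G$ is disconnected — contradiction) or dispatch the finitely many small shapes of $G$ by hand. The cleanest route is probably to observe that if $U = V(G)$ and $\supp f = \emptyset$, then $V(G) = \{a,b,1,2,3\}$ with at most $5$ vertices and an explicit edge set, a finite check; and if $\supp f \ne \emptyset$ then connectivity forces an edge from $\supp f$ to $U$, i.e. $N(U) \ne \emptyset$, returning us to the previous paragraph.
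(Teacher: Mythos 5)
Your plan reproduces the easy half of the paper's argument, but the two structural claims it rests on are false, and they fail exactly in the configurations that make this lemma delicate. From $x^\a=x_ax_bx_1x_2x_3\cdot f\notin I^3$ one can only conclude that $\supp f$ is independent and disjoint from $N(\{1,2,3\})$; it need not be disjoint from $N(\{a,b\})$, and it is not true that every $r\in N(U)$, $U=\{a,b,1,2,3\}$, lies outside $\supp\a$ and satisfies $x_r\in\sqrt{I^3:x^\a}$. Concretely, take $G$ on $\{1,2,3,a,b,p\}$ with edges $12,13,23,1a,ab,ap$ and $x^\a=x_1x_2x_3x_ax_bx_p$ (so $f=x_p$, and no other choice of edge--triangle decomposition helps). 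Then $x^\a\in IJ_1(G)$, and $x^\a\notin I^3$ because three edges whose product divides this squarefree monomial would form a perfect matching of $G$, impossible since $b$ and $p$ are both adjacent only to $a$. Here $p\in\supp f\cap N(a)$, the only vertex of $N(U)\setminus U$ is $p\in\supp\a$, and one computes (via Lemma \ref{radical_colon}) that $\sqrt{I^3:x^\a}=I+(x_1,x_2,x_3,x_a)$, which contains no variable outside $\supp\a$; in particular $x_p,x_b\notin\sqrt{I^3:x^\a}$, so the justification ``$x_rx_s$ plus one triangle edge plus one more edge of $U$'' breaks down (the leftover vertices of $U$ need not span an edge). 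Thus your dichotomy collapses: there is no peeling variable, and $U\neq V(G)$, so neither branch of your argument applies; the final paragraph is also internally inconsistent, since the neighbour of $U$ produced by connectivity may lie in $\supp f\subseteq\supp\a$.

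What is missing is the mechanism the paper uses precisely for these cases. Peeling works automatically only for $r\in N(\{1,2,3\})\setminus\{1,2,3,u,v\}$ (such $r$ always satisfies $x_r\in\sqrt{I^3:x^\a}$ and $r\notin\supp\a$); this is the paper's Case 1 and matches your first step. When instead $N(\{1,2,3\})\subseteq\{1,2,3,u,v\}$, the paper shows in the bad subcase that $\sqrt{I^3:x^\a}=I+(x_i\mid i\in N(\{1,2,3,u\}\cup\supp f))$, so that no minimal generator involves $x_u$, whence by Lemma \ref{cone} the complex $\D_\a(I^3)$ is a cone over a vertex of $\supp\a$ --- contradicting that $(\a,i)$ is an extremal exponent (in my example the complex is a cone over $b$, so such $\a$ simply cannot be extremal, but this has to be proved). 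In the surviving subcase $u,v\in N(\{1,2,3\})$ one finds a genuine outside neighbour $t$ with $x_1x_2x_3x_ux_vx_t\in I^3$ and peels, and when $V(G)=\{1,2,3,u,v\}$ one checks directly that $\D_\a(I^3)=\{\emptyset\}$, $i=0$, and $\reg S/I^3=|\a|=5\le\reg S/I^{(3)}$. Without the cone argument for extremal exponents and this terminal computation, your proposal leaves a genuine gap.
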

\begin{proof} Without loss of generality, we may assume that $x^\a = x_1 x_2 x_3x_ux_v\cdot f$, where $C=123$ is a $3$-cycle; $uv$ is an edge of $G$ and a monomial $f\in S$. Since $x^\a\notin I^3$, $\supp(f)\cap N(\{1,2,3\})=\emptyset$.
		
We next distinguish some cases:

\smallskip
\noindent\textbf{Case 1:} $|N(\{1,2,3\})\setminus\{u,v,1,2,3\}|\ge 1$. Let $r\in N(\{1,2,3\})\setminus \{1,2,3,u,v\}$. So $r\notin \supp(f)$. Then $r\notin\supp\a$ and $x_r\in\sqrt{I^3:x^\a}$. With the same lines of the proof of Lemma \ref{red1},  we have $\reg(I^3) \le \reg(I^{(3)}).$

\smallskip
\noindent\textbf{Case 2:} $|N(\{1,2,3\})\setminus\{u,v,1,2,3\}|= 0$. By $n\ge 4$ and $G$ is connected, we must have $|\{u,v\}\setminus\{1,2,3\}|\ge 1$ and $|\{u,v\}\cap N(\{1,2,3\})|\ge 1$. If $v\in N(\{1,2,3\})$ and $u\notin N(\{1,2,3\})$. Hence $N(\{1,2,3\})=\{1,2,3,v\}$. By $x^\a\notin I^3$, we must have $\supp(f)\cup\{u\}$ is an independent set of $G$ (so $u\notin N(\supp(f))$) and $\supp(f)\cap N(\{1,2,3,u\})=\emptyset$. It is noted that $u$ may belong to $\supp(f)$.
One can check that
$$\sqrt{I^3:x^\a}=I+(x_i~|~i\in N(\{1,2,3,u\}\cup\supp(f))).$$
Hence, $x_u\notin \sqrt{I^3:x^\a}$. Using Claim A, $\D_\a(I^3)$ is a cone over $u\in\supp(\a)$, a contradiction.

Therefore, $u,v\in N(\{1,2,3\})$ i.e. $N(\{1,2,3\})=\{u,v,1,2,3\}$. If $V(G)$ properly contains $\{1,2,3,u,v\}$, then by the connectedness property, there exists $t\in N(\{1,2,3,u,v\})\setminus\{1,2,3,u,v\}$. So, $t\notin\supp(f)$ i.e. $t\notin\supp(\a)$. Now $x_t \in \sqrt{I^3:x^\a}$ by $x_1x_2x_3x_ux_vx_t\in I^3$. By Lemma \ref{red0}, and induction, we have $\reg(I^3) \le \reg(I^{(3)})$. Finally, if $V(G) = \{1,2,3,u,v\}$, then $\Delta_\a(I^3) = \{\emptyset\}$. Thus, $i = 0$, and $\reg S/I^3 = |\a| + i = 5 \le \reg S/I^{(3)}.$
\end{proof}

\begin{lem}\label{red3}  If $x^\a\in J_2(G)$ and it is divisible by $x_C$, where $C$ is a clique of size $4$ in $G$, then $\reg(I^3) \le \reg(I^{(3)}).$
\end{lem}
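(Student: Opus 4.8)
The plan is to follow the template of Lemmas~\ref{red1} and~\ref{red2}. Write $x^\a = x_1x_2x_3x_4\cdot f$, where $C=1234$ is the clique of size four; the aim is to locate a variable of $\sqrt{I^3:x^\a}$ lying outside $\supp\a$, so that Lemma~\ref{red0} collapses a vertex and the induction on $n=|V(G)|$ applies, and to treat the remaining configurations by hand.

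Since $x_1x_2,\,x_3x_4\in I$ and $x^\a\notin I^3$, the set $\supp f$ is independent in $G$; and if some $s\in\supp f$ is adjacent to a vertex $j\in C$, then the edge-times-triangle monomial $(x_jx_s)\cdot x_{C\setminus\{j\}}$ divides $x^\a$, so $x^\a\in IJ_1(G)$ and we are done by Lemma~\ref{red2}. Hence I may assume $\supp f\cap N(C)=\emptyset$. Suppose next there is a vertex $r\notin\supp\a$ adjacent either to some $s\in\supp f$ or to at least two vertices of $C$: in the first case $(x_rx_s)(x_1x_2)(x_3x_4)\in I^3$ divides $x_r\,x^\a$, and in the second, with $r$ adjacent to $j_1\ne j_2$ in $C$, the monomial $(x_rx_{j_1})(x_rx_{j_2})\cdot x_{C\setminus\{j_1,j_2\}}\in I^3$ divides $x_r^2\,x^\a$; either way $x_r\in\sqrt{I^3:x^\a}$. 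Lemma~\ref{red0} then gives $\reg(I^3)=\reg(I^3,x_r)=\reg(J^3)$ with $J$ the edge ideal of the restriction of $G$ to $V(G)\setminus\{r\}$, and the induction hypothesis together with Corollary~\ref{restriction_inq} yields $\reg(I^3)\le\reg(I^{(3)})$, exactly as in the proof of Lemma~\ref{red1}.

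It remains to handle the case where no such $r$ exists. Then each $s\in\supp f$ has no neighbour at all — none in $C$, none in $\supp f$ by independence, and none outside $\supp\a$ (as that would be an admissible $r$) — which, since $G$ has no isolated vertex, forces $\supp f=\emptyset$; thus $x^\a=x_1x_2x_3x_4$ and no vertex of $V':=[n]\setminus C$ is adjacent to two vertices of $C$. As $(x_jx_{j'})(x_jx_{j''})(x_jx_{j'''})\in I^3$ for the three remaining vertices of $C$, each $x_j$ with $j\in C$ lies in $\sqrt{I^3:x^\a}$, so $\Delta_\a(I^3)$ involves only vertices of $V'$; since $I^3$ is generated in degree $6$ this forces $n\ge5$ (otherwise $\Delta_\a(I^3)=\{\emptyset\}$, $i=0$, and $\reg(S/I^3)=4<5$, a contradiction) and $\reg(S/I^3)=4+i$ with $i\ge1$. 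If $i=1$ we are done: $(x_1x_2)^3$ is divisible by no minimal generator of $IJ_1(G)+J_2(G)$ — these have at least three distinct variables in their support, or the support of a clique of size four or of a $5$-cycle — hence $(x_1x_2)^3$ is a minimal generator of $I^{(3)}$ and $\reg(S/I^{(3)})\ge5$. If $i\ge2$: Lemma~\ref{radical_colon} identifies $\sqrt{I^3:x^\a}$ with $(x_1,\dots,x_4)+\bigl(x_vx_w\mid\{v,w\}\in E(G[V'])\ \text{or}\ v,w\ \text{are adjacent to distinct vertices of }C\bigr)$, so $\Delta_\a(I^3)$ is the independence complex of an explicit graph $G^\ast\supseteq G[V']$ on $V'$; the link carrying the nonzero reduced homology then lives on a proper induced subgraph $H$ of $G$, and one closes the estimate via Corollary~\ref{restriction_inq}, the induction hypothesis $\reg(S_H/I(H)^{(3)})\ge\reg(S_H/I(H)^3)$, and the lower bound $\reg(I(H)^3)\ge 2\cdot3+\nu(H)-1$ in terms of the induced matching number $\nu(H)$.

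The main obstacle is this last case: identifying $\Delta_\a(I^3)$ precisely when several vertices of $C$ have neighbours in $V'$ (so that $G^\ast\ne G[V']$), and choosing the induced subgraph $H$ so that it simultaneously carries the top reduced homology of $\Delta_\a(I^3)$ and has induced matching number large enough for the numerology to close. The transparent sub-case is when all edges from $C$ to $V'$ issue from a single vertex of $C$: there $\Delta_\a(I^3)=\operatorname{Ind}(G[V'])$ on the nose, and one may take $H$ to be $G[V']$ enlarged by that vertex of $C$, attached as a pendant.
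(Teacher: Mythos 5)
Your reductions in the first two paragraphs are correct and essentially coincide with the first half of the paper's argument: the case $\supp (f)\cap N(C)\neq\emptyset$ is absorbed into Lemma~\ref{red2}, a vertex $r\notin\supp\a$ adjacent to a vertex of $\supp(f)$ or to two vertices of $C$ allows Lemma~\ref{red0} plus induction (your use of the no-isolated-vertex assumption to kill $\supp(f)\neq\emptyset$ is a fine substitute for the paper's cone argument), and the residual configuration is $x^\a=x_1x_2x_3x_4$ with no vertex outside $C$ adjacent to two vertices of $C$; your identification of $\sqrt{I^3:x^\a}$ there agrees with the paper's formula. The genuine gap is the case $i\ge 2$, which is precisely where the paper spends nearly all of its effort and which you explicitly leave as an ``obstacle'' with only a sketched plan. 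That plan does not work as stated: from $\h_{i-1}(\lk_{\D_\a(I^3)}F;K)\neq 0$, where $\D_\a(I^3)$ is the independence complex of the auxiliary graph $G^*$, you cannot extract an induced matching of size $i$ --- nonvanishing homology of an independence complex in dimension $i-1$ does not force the induced matching number to be at least $i$ (already $\mathrm{Ind}(C_5)$ is homotopy equivalent to a circle, giving $i=2$ while $C_5$ has induced matching number $1$, and in general the regularity of an independence complex can exceed the induced matching number by an arbitrary amount). Moreover $G^*$ contains the common-neighbour edges $\{v,w\}$ with $v\in N(i)$, $w\in N(j)$, which need not be edges of $G$, so even an induced matching of $G^*$ could not be transported to an induced subgraph $H$ of $G$ in order to invoke the bound $\reg I(H)^3\ge 2\cdot 3+\nu(H)-1$ (a result you would in any case need to cite). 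Even in your ``transparent sub-case'' the numerology does not close: there $i$ can be as large as $\reg\bigl(S_{V'}/I(G[V'])\bigr)$, while the pendant construction only yields $\nu(H)\le\nu(G[V'])+1$, and the former can strictly exceed the latter.

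For comparison, the paper closes this case not through independence-complex combinatorics but by writing $\sqrt{I^3:x^\a}$ as an intersection of simpler ideals, applying the Mayer--Vietoris sequence to the corresponding union of complexes (iterated three times), and recognizing each resulting piece as a degree complex $\D_\b(I^3)$ for an auxiliary exponent $\b$ of degree $5$: the pieces of the form $I+(x_i\mid i\in N(\{1,2,3\}))$ arise from $\b$ with $x^\b=x_1^2x_2^2x_3$ and contradict extremality because $|\b|>|\a|$, while the pieces of the form $I+(x_i\mid i\in N(\{1,2,3,4\}))$ arise from $x^\b=(x_1x_2x_3)(x_1x_4)\in IJ_1(G)$ with $|\b|=|\a|+1$, so that $(\b,i-1)$ is again an extremal exponent and Lemma~\ref{red2} applies. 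Some device of this kind --- trading $\a$ for degree-$5$ exponents whose colon ideals are computable --- is what your sketch is missing, and without it the lemma is not proved.
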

\begin{proof} Without loss of generality, we may assume that $x^\a = x_1 x_2 x_3x_4\cdot f$, where $1234$ is a clique of size $4$ in $G$ and a monomial $f\in S$. Using Lemma \ref{red2}, we may assume that $\supp(f)\cap N(\{1,2,3,4\})=\emptyset$. By $x^\a\notin I^3$, we must have $\supp(f)$ is either an independent set of $G$ or the empty set. If there exists $r\in N(1)\cap N(2)\setminus \{3,4\}$ then $x_r\in \sqrt{I^3:x^\a}$ and $r\notin\supp(\a)$. Using Lemma \ref{red0} and similarly with the proof of Theorem \ref{Main1}, we obtain $\reg(I^3) \le \reg(I^{(3)})$. Hence, we can assume that $N(i)\cap N(j)=\{1,2,3,4\}\setminus\{i,j\}$ for all $1\le i\ne j\le 4$. Then, one can see that
	\begin{multline*}
\sqrt{I^3:x^\a} = I+\sum_{1\le i\ne j\le 4} N(i)\cdot N(j)+(x_i~|~i\in N(\supp(f))\cup\{1,2,3,4\})
	\end{multline*}
	where $N(U)\cdot N(W)=(x_ix_j~|~i\in N(U), j\in N(W))$ is an ideal in $S$ for any $U, W\subset [n]$.
	
	If $f\ne 1$, we assume that $r\in \supp(f)$. Using again the Claim A, $\D_\a(I^3)$ is a cone over $r\in\supp(\a)$, a contradiction.
	
	If $f=1$, then $x^\a=x_1 x_2 x_3x_4$. Then, 
\begin{align*}
\sqrt{I^3:x^\a} &= I+\sum_{1\le i\ne j\le 4} N(i)\cdot N(j)+(x_1,x_2,x_3,x_4)\\
&= I + N(\{1,2,3\}) \cdot N(4) + N(\{1,2\})\cdot N(3) + N(1)\cdot N(2)+(x_1,x_2,x_3,x_4).
\end{align*}
Note that $N(\{1,2,3\}) = N[\{1,2,3\}]$ as $123$ is a triangle in $G$. Let $L=I + N(\{1,2\})\cdot N(3) + N(1)\cdot N(2)+(x_1,x_2,x_3,x_4)$. Then,  
$$\sqrt{I^3:x^\a}  = (L + (x_i~|~i\in N(\{1,2,3\}))) \cap (L + (x_i~|~ i\in N(4))).$$
Let $\D=\D_\a(I^3)$ and $\Gamma_1,\Gamma_2$ be the simplicial complexes which are corresponding to $L + (x_i~|~i\in N(\{1,2,3\}))$ and $L + (x_i~|~ i\in N(4))$. Hence, $$\Delta = \Gamma_1 \cup \Gamma_2.$$
Moreover, $\lk_\D(F)=\lk_{\Gamma_1} F\cup \lk_{\Gamma_2}F$. Applying the Mayer-Vietoris sequence, we have
\begin{multline*}
\cdots\rightarrow\h_{i-1}(\lk_{\Gamma_1}F;K) \oplus \h_{i-1}(\lk_{\Gamma_2}F;K) \rightarrow \h_{i-1}(\lk_{\Delta}F;K) \rightarrow \\
\h_{i-2}(\lk_{\Gamma_1}F \cap\;\lk_{\Gamma_2}F;K)\rightarrow\cdots
\end{multline*}
Since the middle term is nonzero, this implies that either the term on the left or the term on the right is nonzero. In particular, we have three cases.

\noindent\textbf{Case 1:} $\h_{i-2}(\lk_{\Gamma_1} F \cap \lk_{\Gamma_2} F;K) \ne 0$.  One can see that $\lk_{\Gamma_1} F \cap \lk_{\Gamma_2} F = \lk_{\Gamma_1\cap\Gamma_2} F$ and $\Gamma_1\cap\Gamma_2 = \Gamma$, which is the simplicial complex of $I+(x_i~|~ i\in N(\{1,2,3,4\}))$. It is noted that we also have 
$$\sqrt{I^3:x^\b}=I+(x_i~|~ i\in N(\{1,2,3,4\})),$$
where $x^\b=(x_1x_2)(x_1x_3x_4)$. Thus, $|\b|+i-1\le \reg(S/I^3)=|\a|+i=|\b|+i-1$. It implies that $(\b;i-1)$ is also an extremal exponent of $I^3$. This statement is done from the previous Lemma \ref{red2}.
 
\noindent\textbf{Case 2:} $\h_{i-1}(\lk_{\Gamma_1} F;K) \ne 0$. One can see that
$$\sqrt{I^3:(x_1^2x_2^2x_3)}=I+(x_i~|~ i\in N(\{1,2,3\}))=L + (x_i~|~i\in N(\{1,2,3\}))).$$
It means that $\Gamma_1=\D_{x_1^2x_2^2x_3}(I^3)$. Then, $\reg(S/I^3)\ge 5+i>|\a|+i$, a contradiction.

\noindent\textbf{Case 3:} $\h_{i-1}(\lk_{\Gamma_2} F;K) \ne 0$. We have
$$L + (x_i~|~ i\in N(4))=I + N(\{1,2\})\cdot N(3) + N(1)\cdot N(2)+(x_i~|~ i\in N[4]).$$
Let $H= I+N(1)\cdot N(2)+(x_i~|~ i\in N[4])$, then 
$$L + (x_i~|~ i\in N(4))=(H+(x_i~|~ i\in N(\{1,2\})))\cap(H+(x_i~|~ i\in N(3))).$$
Let $\gamma_1$ and $\gamma_2$ be the simplicial complexes which are corresponding to $H + (x_i ~| ~ i \in N (\{1,2\}) = (I+(x_i~|~ i\in N(\{1,2,4\})))$ and $H + (x_i ~| ~i \in N(3)) = (I+N(1)\cdot N(2)+(x_i~|~ i\in N(\{3,4\})))$. Then, $\Gamma_2=\gamma_1\cup\gamma_2$. Applying the Mayer-Vietoris sequence again, we also deduce three subcases:

Subcase 3.1: $\h_{i-2} (\lk_{\gamma_1} F \cap \lk_{\gamma_2} F;K) \ne 0$. One can see that $\gamma_1\cap \gamma_2$ is exactly the simplicial complex of 
\begin{multline*}
(I+(x_i~|~ i\in N(\{1,2,4\})))+(I+N(1)\cdot N(2)+(x_i~|~ i\in N(\{3,4\})))\\=I+(x_i~|~ i\in N(\{1,2,3,4\})),
\end{multline*}
and we deduce our statement as in the Case 1.

Subcase 3.2: $\h_{i-1} (\lk_{\gamma_1} F;K) \ne 0$. Similar to Case 2, we also have a contradiction.

Subcase 3.3: $\h_{i-1} (\lk_{\gamma_2} F;K) \ne 0$. We have 
\begin{multline*}
(I+N(1)\cdot N(2)+(x_i~|~ i\in N(\{3,4\})))\\=(I+(x_i~|~i\in N(\{1,3,4\}))\cap (I+(x_i~|~i\in N(\{2,3,4\})).
\end{multline*}
Let $\delta_1$ and $\delta_2$ be the simplicial complexes which are corresponding to $(I+(x_i~|~ i\in N(\{1,3,4\})))$ and $(I+(x_i~|~ i\in N(\{2,3,4\})))$. Applying the Mayer-Vietoris sequence again, we also deduce that either $\h_{i-1}(\lk_{\delta_1} F \cap \lk_{\delta_2} F;K)\ne 0$ or $\h_i(\lk_{\delta_1} F;K)\ne 0$ or $\h_i(\lk_{\delta_1} F;K)\ne 0$. Moreover, $\delta_1\cap \delta_2$ is also exactly the simplicial complex of $(I+(x_i~|~ i\in N(\{1,2,3,4\})))$. With the same argument as in the Case 1 and the Case 2, we have the desired conclusion.
\end{proof}

\begin{lem}\label{red4}  If $x^\a\notin I^{(3)}$ with $\D_\a(I^{(3)})\ne\D_\a(I^{3})$, then $\reg(I^3) \le \reg(I^{(3)}).$
\end{lem}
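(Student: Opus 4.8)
The plan is to leverage Lemma \ref{Key3}: since $x^\a \notin I^{(3)}$ but $\Delta_\a(I^{(3)}) \neq \Delta_\a(I^3)$, that lemma tells us exactly what $\a$ and the extra generator look like. Namely, there is a triangle $123$ in $G$ with $x_1x_2x_3 \mid x^\a$, and the minimal squarefree generator $f$ of $\sqrt{I^{(3)}:x^\a}$ not lying in $\sqrt{I^3:x^\a}$ has $\deg f = 1$ with $\supp f \notin \supp\a$. Write $f = x_t$, so $x_t \notin \supp\a$. The key point is that $x_t \in \sqrt{I^{(3)}:x^\a}$, but we do \emph{not} know a priori that $x_t \in \sqrt{I^3:x^\a}$; however, $(\a,i)$ is an extremal exponent of $I^3$, and I would first argue that we may assume $x^\a \notin I^3$ (immediate) and that, after possibly applying Lemmas \ref{red1}, \ref{red2}, \ref{red3}, we may assume $x^\a$ is \emph{not} divisible by $x_C$ for a $5$-cycle $C$, not in $IJ_1(G)$, and not divisible by $x_C$ for a clique of size $4$. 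In other words, by Theorem \ref{expansion_third} the only obstruction to $x^\a \in I^3$ that survives is of "matching number $\le 2$'' type together with the triangle structure.

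Next I would exploit the triangle $123$ with $x_1x_2x_3 \mid x^\a$ directly. Write $x^\a = x_1x_2x_3 \cdot g$ for a monomial $g$. Since $x^\a \notin I^3$ and we have ruled out the $IJ_1(G)$, clique-of-size-$4$, and $5$-cycle cases, the exponents $\alpha_1,\alpha_2,\alpha_3$ must be tightly constrained — indeed, as in the proof of Theorem \ref{expansion_third}, I expect to deduce $\alpha_1 = \alpha_2 = \alpha_3 = 1$ and that $\supp g$ avoids most of $N(\{1,2,3\})$. The aim is then to compute $\sqrt{I^3:x^\a}$ explicitly as $I$ plus a sum of products of neighborhood ideals plus a collection of variables, exactly in the style of Lemmas \ref{red2} and \ref{red3}. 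Once I have such an explicit description, I compare it with $\sqrt{I^{(3)}:x^\a}$, which by hypothesis is strictly larger and contains $x_t$; I would then show, using Claim A, that if $x_t$ is not already forced to lie in $\sqrt{I^3:x^\a}$, then $\Delta_\a(I^3)$ is a cone over some vertex of $\supp\a$ — contradicting extremality. This would force $\Delta_\a(I^{(3)}) = \Delta_\a(I^3)$ after all in the "interesting'' sub-case, or else reduce to a smaller graph via Lemma \ref{red0} and induction (restricting $I$ to $V(G) \setminus \{r\}$ for a suitable neighbor $r \notin \supp\a$, then applying Corollary \ref{restriction_inq}).

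So the overall structure is: (1) invoke Lemma \ref{Key3} to pin down the shape of $\a$; (2) use Lemmas \ref{red1}--\ref{red3} to dispose of the $5$-cycle, $IJ_1(G)$, and clique-of-size-$4$ cases, leaving only the bare-triangle situation $x_1x_2x_3 \mid x^\a$ with $\alpha_1=\alpha_2=\alpha_3=1$; (3) compute $\sqrt{I^3:x^\a}$ explicitly in terms of $I$, neighborhood-product ideals, and variables; (4) either find a neighbor $r \notin \supp\a$ with $x_r \in \sqrt{I^3:x^\a}$, reduce via Lemma \ref{red0} to $G - \{r\}$, and close by induction and Corollary \ref{restriction_inq}; or, if no such $r$ exists, show via Claim A that $\Delta_\a(I^3)$ is a cone over a vertex of $\supp\a$, contradicting that $(\a,i)$ is extremal; (5) handle the base case $V(G) = \{1,2,3\} \cup (\text{a few more vertices})$ directly, where $\Delta_\a(I^3) = \{\emptyset\}$, $i = 0$, and $\reg S/I^3 = |\a| \le \reg S/I^{(3)}$ trivially.

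The main obstacle I anticipate is step (3): getting a clean closed form for $\sqrt{I^3:x^\a}$ when $x_1x_2x_3 \mid x^\a$ but with possible extra structure in $\supp g$ (for instance, $g$ itself could contain edges or involve vertices adjacent to only one or two of $1,2,3$). The case analysis on how $\supp g$ interacts with $N(1), N(2), N(3)$ — mirroring but extending the bookkeeping of Lemma \ref{red3} — is where the real work lies, and one has to be careful that the neighborhood-product ideals entering $\sqrt{I^3:x^\a}$ are correctly identified so that the Mayer--Vietoris / cone argument goes through without a gap. The silver lining is that Lemma \ref{Key3}(2) already guarantees the offending generator has degree $1$, which drastically limits how $\sqrt{I^{(3)}:x^\a}$ can differ from $\sqrt{I^3:x^\a}$ and is precisely what makes the cone argument in step (4) feasible.
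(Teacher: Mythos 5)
Your skeleton (Lemma \ref{Key3} to pin down the shape of $\a$, an explicit neighborhood-ideal description of $\sqrt{I^3:x^\a}$, cone arguments in the spirit of Claim A, and reduction to $G-r$ via Lemma \ref{red0}, induction and Corollary \ref{restriction_inq}) is the same as the paper's, but your step (4) rests on a false dichotomy, and that is a genuine gap. It is not true that, once no vertex $r\notin\supp\a$ with $x_r\in\sqrt{I^3:x^\a}$ is available, $\Delta_\a(I^3)$ must be a cone over some vertex of $\supp\a$. The hard subcases are precisely of this kind: after all common-neighborhood reductions are exhausted one is left, for example, with $x^\a=x_1x_2x_3x_4$ where the vertex $4$ is adjacent to none, one, or two of the triangle vertices; there $\Delta_\a(I^3)$ is neither a cone over a support vertex nor reducible by an outside neighbor, and your plan gives no way to conclude. (Also, the preliminary appeal to Lemmas \ref{red1}--\ref{red3} is vacuous here, since $x^\a\notin I^{(3)}$ already forces $x^\a\notin IJ_1(G)$ and $x^\a\notin J_2(G)$.)

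What the paper does in those subcases, and what is missing from your outline, is the following transfer mechanism: write $\sqrt{I^3:x^\a}$ as an intersection of simpler ideals, so that $\Delta_\a(I^3)=\Gamma_1\cup\Gamma_2$, apply Mayer--Vietoris to $\lk_{\Delta_\a(I^3)}F$, and then identify each piece $\Gamma_1$, $\Gamma_2$, $\Gamma_1\cap\Gamma_2$ (possibly after iterating the decomposition) as a degree complex $\Delta_\b(I^3)$ for an auxiliary exponent $\b$, e.g.\ $x^\b=x_1^2x_2^2x_3$ or $x^\b=x_1^2x_2x_3x_4$. Nonvanishing homology of a piece then either contradicts extremality of $(\a,i)$ because $|\b|>|\a|$ forces $\reg(S/I^3)\ge|\b|+i>|\a|+i$, or exhibits $(\b,i-1)$ as a new extremal exponent whose monomial lies in $IJ_1(G)$ or is divisible by a clique of size $4$, i.e.\ exactly the situations already settled by Lemmas \ref{red2} and \ref{red3}. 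You mention ``Mayer--Vietoris / cone'' in passing, but without this comparison against auxiliary exponents the argument cannot close. A secondary inaccuracy: $\alpha_1=\alpha_2=\alpha_3=1$ (and the squarefreeness of the cofactor $f$) does not follow ``as in the proof of Theorem \ref{expansion_third}''; in the paper this is Claim B, and it again requires an extremality/cone argument, showing e.g.\ that if $\alpha_1\ge 2$ then $x_1$ appears in no minimal generator of $\sqrt{I^3:x^\a}$, so $\Delta_\a(I^3)$ would be a cone over $1\in\supp\a$.
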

\begin{proof} By Lemma \ref{Key3} and Lemma \ref{red2}, we can assume that $x^\a=x_1x_2x_3\cdot f$ where $123$ is a $3$-cycle of $G$ and a monomial $f\in S$ with $\supp(f)$ is an independent set of $G$. First, we have
	 
\smallskip
\noindent\textbf{Claim B:} 
\begin{enumerate}
	\item $\supp(f)\cap \{1,2,3\}=\emptyset$.
	\item If $r\in\supp(f)$ then $x_r^2$ is not a divisor of $f$.
\end{enumerate}

Now, let $\supp(f)=\{4,\ldots,t\}$ for some $t\ge 4$. Assume that $r, s \in \supp f$ are such that $r \in N({1,2,3})$ while $s \notin N({1,2,3})$. Since $G$ is connected and has no isolated vertices, there exists a neighbor of $s$, called $u$. Since $\supp f$ is an independent set, $u \notin \supp \a$. Furthermore, $x_u \in \sqrt{I^3:x^\a}$ as $x_u x_s x_1x_2x_3x_r \in I^3.$ By Lemma \ref{red0} we are done by induction. Thus, we may assume that either  $\{4,\ldots, t\}\cap N(\{1,2,3\})=\emptyset$ or $\{4,\ldots, t\}\subseteq N(\{1,2,3\})$.

Futhermore, we also may assume that $(N(i)\cap N(j))\setminus\{1,2,3\}=\emptyset$ for all $4\le i\ne j\le t$;  $(N(\{1,2,3\})\cap N(\{4,\ldots,t\}))\setminus\{1,2,3\}=\emptyset$; and $N(1)\cap N(2)\cap N(3)=\emptyset$ by if either $u\in (N(i)\cap N(j))\setminus\{1,2,3\}$ or $u\in N(1)\cap N(2)\cap N(3)$ or $u\in (N(\{1,2,3\})\cap N(\{4,\dots,t\}))\setminus\{1,2,3\}=\emptyset$, then $x_u\in\sqrt{I^3:x^\a}$ and $u\notin\supp(\a)$ and we are done by induction and Lemma \ref{red0}.

\smallskip

\noindent \textbf {Case 1:} $\{4,\ldots, t\}\cap N(\{1,2,3\})=\emptyset$. Denote $N^*(i) = N(i) \setminus \{1,2,3\}$. In this case, we have

\smallskip

\noindent\textbf{Claim C:}
	$$\sqrt{I^3 : x^\a} = I + \sum_{4\le i < j\le t} N(i) \cdot N(j)+N(\{1,2,3\}) \cdot N(\{4,\ldots,t\})+N^*(1)\cdot N^*(2)\cdot N^*(3).$$
\smallskip

There are two subcases: 

Subcase 1.1. If $t \ge 5$. Let $$H= I + \sum_{4\le i < j\le t, (i,j) \neq (t-1,t)} N(i) \cdot N(j)+N^*(1)\cdot N^*(2)\cdot N^*(3)+N(\{1,2,3\}) \cdot N(\{4,\ldots,t\}).$$ Then, 
$$\sqrt{I^3:x^\a} = (H + (x_i~|~i\in N(t-1))) \cap (H + (x_i~|~i\in N(t))).$$ Let $\Gamma_1$ and $\Gamma_2$ be the simplicial complexes which are corresponding to $H + (x_i~|~i\in N(t-1))$ and $H + (x_i~|~ i\in N(t))$. Hence, $$\Delta_\a(I^3) = \Gamma_1 \cup \Gamma_2.$$
Moreover, $\Gamma_1\cap\Gamma_2$ is the simplicial complex of $H + (x_i~|~i\in N(\{t-1,t\}))$. Using the above Claim A, one can see that $\Gamma_1$ is a cone over $t-1$, $\Gamma_2$ and $\Gamma_1\cap\Gamma_2$ are cones over $t$. By $F\cap\supp(\a)=\emptyset$, $t, t-1\notin F$. Applying the Mayer-Vietoris sequence, this implies that either $\h_{i-1}(\lk_{\Gamma_1} F) \neq 0$ or  $\h_{i-1}(\lk_{\Gamma_2} F) \neq 0$ or $\h_{i-2}(\lk_{\Gamma_1} F\cap \lk_{\Gamma_2} F) \neq 0$, which is a contradiction.

Subcase 1.2. If $t = 4$. Let $H = I + N^*(1)\cdot N^*(2)\cdot N^*(3)$. Then, 
$$\sqrt{I^3:x^\a} =(H+(x_i~|~ i\in N(\{1,2,3\})))\cap (H+(x_i~|~ i\in N(4))).$$ 
Let $\Gamma_1$ and $\Gamma_2$ be the simplicial complexes which are corresponding to $H + (x_i~|~i\in N(\{1,2,3\}))$ and $H + (x_i~|~ i\in N(4))$. Hence, 
$$\Delta_\a(I^3) = \Gamma_1 \cup \Gamma_2.$$
Moreover, $\Gamma_1\cap\Gamma_2$ is also the simplicial complex of $H + (x_i~|~i\in N(\{1,2,3,4\}))$. Using Claim A, we can see that $\Gamma_2$ and $\Gamma_1\cap\Gamma_2$ are cones over $4$. By $F\cap\supp(\a)=\emptyset$, $4\notin F$. Applying the Mayer-Vietoris sequence, this implies that $\h_{i-1}(\lk_{\Gamma_1} F) \neq 0$. On the other hand,
\begin{align*}
H+(x_i~|~i\in N(\{1,2,3\})=I + (x_i~|~i\in N(\{1,2,3\}).
\end{align*}
Let $\b = x_1^2x_2^2x_3$. Then $\sqrt{I^3:x^\b} = I + (x_i ~|~ i \in N(\{1,2,3\}))$. Thus, 
$$|\b|+i\le \reg(S/I^3)=|\a|+i,$$
which is a contradiction, as $|\b| > |\a|$.

\smallskip

\noindent \textbf{Case 2:} $\{4,\ldots, t\}\subseteq N(\{1,2,3\})$. If $t \ge 5$, we have a contradiction by

\smallskip

\noindent\textbf{Claim D:} $\Delta_\a(I^3)$ is a cone over $4$.
\smallskip

Thus, $t = 4$. There are two subcases:

Subcase 2.1: $|N(4) \cap \{1,2\}| = 1$. Assume that $x_1x_4 \in I$ and $x_2x_4, x_3x_4\notin I$. One can see that 
$$\sqrt{I^3:x^\a} = I + N(2)\cdot N(3) + N(\{1,2,3\})  \cdot N(4)+(x_1).$$
Let $J = I+(x_1) + N(2)\cdot N(3)$. Then 
$$\sqrt{I^3:x^\a} = (J  + (x_i~|~i\in N(4)))  \cap (J + (x_i~|~i\in N(\{1,2,3\}))).$$
Let $\Gamma_1,\Gamma_2$ be the corresponding simplicial complexes of $J  + (x_i~|~i\in N(4))$  and $J + (x_i~|~i\in N(\{1,2,3\}))$. Then 
$$\Delta_\a(I^3) = \Gamma_1 \cup \Gamma_2.$$
Note that $\Gamma_1 \cap \Gamma_2$ is the simplicial complex of $J + (x_i~|~i\in N(\{1,2,3,4\}))$. 
It is noted that $4\notin F$ by $4\in\supp(\a)$. Since $\Gamma_1$ is a cone over $4$, applying the Mayer-Vietoris sequence, this implies that either $\h_{i-1}(\lk_{\Gamma_2} F;K) \neq 0$ or $\h_{i-2} (\lk_{\Gamma_1 \cap \Gamma_2} F;K) \neq 0$.

Subcase 2.1.1. $\h_{i-1} (\lk_{\Gamma_2} F;K) \neq 0$. Let $\b = x_1^2x_2^2x_3$, then 
$$\sqrt{I^3:x^\b} = I + (x_i~|~i\in N(\{1,2,3\})) = J +  (x_i~|~i\in N(\{1,2,3\})).$$
Thus, $|\b|+i\le \reg(S/I^3)=|\a|+i,$ which is a contradiction, as $|\b| > |\a|$.

Subcase 2.1.2. $\h_{i-2}(\lk_{\Gamma_1 \cap \Gamma_2} F;K) \neq 0.$ Let $\b = x_1^2x_2x_3x_4$, then
$$\sqrt{I^3:x^\b} = I + (x_i~|~i\in N(\{1,2,3,4\})) = J +(x_i~|~i\in  N(\{1,2,3,4\})).$$
Thus, $|\b|+i-1\le \reg(S/I^3)=|\a|+i=|\b|+i-1$. Then $(\b,i-1)$ is also an extremal exponent with respect to $I^3$. Furthermore, $x_1^2x_2x_3x_4 = (x_1x_2x_3) \cdot (x_1x_4)$ is the product of a triangle and an edge, which was treated in Lemma \ref{red3}. From this, we deduce that $\reg I^3 \le \reg I^{(3)}$ as required.

Subcase 2.2: $|N(4) \cap \{1,2\}| = 2$. Assume that $x_1x_4, x_4x_2 \in I$ and $x_3x_4 \notin I$. In this case, we have 
$$\sqrt{I^3:x^\a} = I + N(4) \cdot N(\{1,2,3\}) + N(3) \cdot N(\{1,2\}) + (x_1,x_2).$$ 
Let $H = I + (x_1,x_2) + N(3) \cdot N(\{1,2\})$. Then $\sqrt{I^3:x^\a} = (H + (x_i~|~i\in N(4))) \cap (H + (x_i~|~i\in N \{1,2,3\}))$. Let $\delta_1,\delta_2$ be the corresponding simplicial complexes of $H  + (x_i~|~i\in N(4))$  and $H+ (x_i~|~i\in N(\{1,2,3\}))$. Then 
$$\Delta_\a(I^3) = \delta_1 \cup \delta_2.$$
Note that $\delta_1 \cap \delta_2$ is the simplicial complex of $H + (x_i~|~i\in N(\{1,2,3,4\}))$. Applying again the Mayer-Vietoris sequence, we deduce that either $\h_{i-1}(\lk_{\delta_1}F;K) \neq 0$ or $\h_{i-1}(\lk_{\delta_2}F;K) \neq 0$ or $\h_{i-2}(\lk_{\delta_1 \cap \delta_2} F;K) \neq 0$, where $F\in \Delta_\a(I^3)$ such that $F\cap\supp(\a)=\emptyset$ and $\h_{i-1}(\lk_{\Delta_\a(I^3)} F;K)\ne 0$. 

Subcase 2.2.1: $\h_{i-1} (\lk_{\delta_2} F;K) \neq 0$. It is noted that $$H+ (x_i~|~i\in N(\{1,2,3\}))=I+(x_i~|~i\in N(\{1,2,3\})),$$ which gives rise to a contradiction with similar argument as in the subcase 2.1.1. 

Subcase 2.2.2: $\h_{i-2}(\lk_{\delta_1 \cap \delta_2} F;K) \neq 0$. We also have $$H + (x_i~|~i\in N(\{1,2,3,4\}))=I + (x_i~|~i\in N(\{1,2,3,4\})),$$ which is the subcase 2.1.2.

Subcase 2.2.3: $\h_{i-1}(\lk_{\delta_1} F;K) \neq 0$. In this case, we have
 \begin{align*}
 H+ (x_i~|~i\in N(4))&=I+(x_i~|~i\in N(4))+N(3) \cdot N(\{1,2\})\\
 &=(I+(x_i~|~i\in N(\{3,4\})))\cap (I+(x_i~|~i\in N(\{1,2,4\}))).
 \end{align*}
Let $\gamma_1,\gamma_2$ be the corresponding simplicial complexes of $I+(x_i~|~i\in N(\{3,4\}))$  and $I+(x_i~|~i\in N(\{1,2,4\}))$. Then, $\delta_1= \gamma_1\cup \gamma_2$ and $\gamma_1 \cap \gamma_2$ is the simplicial complex of $I + (x_i~|~i\in N(\{1,2,3,4\}))$. Similar to the subcases 2.2.1 and 2.2.2 and $\gamma_1$ is a cone over $4$, we obtain our assertion. Our proof will be completed once we prove Claim B, Claim C, and Claim D, which is done in the sequence. 
\end{proof}

\begin{proof}[Proof of Claim B]
(1) Assume that $1\in \supp(f)\cap \{1,2,3\}$. By $x^\a\notin IJ_1(G)$, $1r\notin G$ for any $r\in\supp(f)\setminus\{1,2,3\}$ i.e. $x_1\notin \sqrt{I^3:x^\a}$. Moreover, $x_r\in \sqrt{I^3:x^\a}$ for all $r\in N(1)$ (by $(x_rx_1)^2x_2x_3\in I^3$). And, the variable $x_1$ will be not appeared in any minimal squarefree monomial generator of $\sqrt{I^{3} : x^\a}$. In fact, by Lemma \ref{radical_colon}, if $g$ is a minimal squarefree monomial generator of $\sqrt{I^{3} : x^\a}$ then $g|\dfrac{x_{e_1}x_{e_2}x_{e_3}}{\gcd(x_{e_1}x_{e_2}x_{e_3}, x^\a)}$ for some edges $e_1, e_2, e_3$ of $G$. By $x_1|g, x_1^2|x^\a$ then $x_1^3|x_{e_1}x_{e_2}x_{e_3}$. But $x_r\in \sqrt{I^3:x^\a}$ for all $r\in N(1)$, $g$ must be $x_1$, which is a contradiction. Using Claim A, $\Delta_\a(I^3)$ is a cone over $1$, a contradiction. 

(2) Assume that $r\in\supp(f)$ and $x_r^2 | f$, then $x_i\in\sqrt{I^3:x^\a}$ for all $i\in N(r)$ by $(x_ix_r)^2x_1x_2x_3\in I^3$. If $x_r\in \sqrt{I^3:x^\a}$, then $1r,2r,3r \in G$, as there is no edges in $\supp(f)$. Thus, $x^\a\in J_2(G)\subseteq I^{(3)}$, a contradiction. If $x_r\notin \sqrt{I^3:x^\a}$, similarly, the variable $x_r$ will be not appeared in any minimal squarefree monomial generator of $\sqrt{I^{3} : x^\a}$. Then $\Delta_\a(I^3)$ is a cone over $r$ (by again the Claim A), which is also a contradiction. This completes our Claim B.
\end{proof}	

\begin{proof}[Proof of Claim C:] 
It is easy to see that the left hand side contains the right hand side. To prove the reverse inclusion, let $g$ be a minimal monomial generator of $\sqrt{I^3: x^\a}$. By Lemma \ref{radical_colon}, there exist three edges $e_1,e_2,e_3$ of $G$ such that 
$$g = \sqrt{\dfrac{x_{e_1}x_{e_2}x_{e_3}}{\gcd(x_{e_1}x_{e_2}x_{e_3},x^\a)}}.$$ 
If $\supp e_i \cap \supp \a = \emptyset$, then $g \in I$, thus we may assume that $\supp e_i \cap \supp \a \neq \emptyset$ for all $i = 1,2,3$. There are two cases:

\smallskip
\noindent\textbf{Case 1:} $\supp e_i \subseteq \supp \a$ for some $i$. Assume that $\supp e_1 \subseteq x^\a$. This implies that $\supp e_1 \subseteq \{1,2,3\}$. We may assume that $e_1 = 12$. Thus, $g = \sqrt{\dfrac{x_{e_2} x_{e_3}}{\gcd (x_{e_2}x_{e_3}, x_3x_4\cdots x_t)}}.$ Let $i = \supp e_2 \cap \{3,4,\ldots,t\}$ and $j = \supp e_3 \cap \{3,4,\ldots,t\}$. Then $i \neq j$ and $g = x_{e_2}x_{e_3} / x_i x_j \in N(i) \cdot N(j).$ 

\smallskip
\noindent\textbf{Case 2:} $|\supp e_i \cap \supp \a| = 1$ for all $i = 1,2,3$. Let $i = \supp e_1 \cap \supp \a$, $j = \supp e_2 \cap \supp a$, and $k = \supp e_3 \cap \supp a$. Then $i, j, k$ must be distinct, and $g = \sqrt{x_{e_1} x_{e_2} x_{e_3} / x_i x_j x_k}$. If either $i,j, k \ge 4$, say $k \ge 4$, then $g \in N(\{1,2,3\}) \cdot N(k)$. If $i,j,k \le 3$, then $\{1,j,k\} = \{1,2,3\}$, and $g \in N^*(1) \cdot N^*(2) \cdot N^*(3)$. This completes Claim C.
\end{proof}
\begin{proof}[Proof of Claim D]
Assume that $g$ is a minimal monomial generator of $\sqrt{I^3:x^\a}$ such that $x_4 | g$. By Lemma \ref{radical_colon}, there exist three edges $e_1, e_2, e_3$ of $G$ such that 
$$g = \sqrt{\dfrac{x_{e_1}x_{e_2}x_{e_3}}{\gcd(x_{e_1}x_{e_2}x_{e_3},x^\a)}}.$$
In particular, $x_4^2 | x_{e_1}x_{e_2}x_{e_3}$. We may assume that $e_1, e_2$ contains $4$. Let $x_{e_1} = x_4 x_s$, $x_{e_2} = x_4x_u$, then 
$g = x_4 \sqrt{\dfrac{x_s x_ux_{e_3}}{\gcd (x_s x_ux_{e_3},x_1x_2x_3 x_5 \cdots x_t)}}$. Since $x_s \in \sqrt{I^3:x^\a}$, and $g$ is minimal, $x_s | x_1x_2x_3 x_5 \cdots x_t$. From $s \in N(4)$, $s \in \{1,2,3\}$. We may assume that $s = 1$. Similarly, $u \in \{2,3\}$.  Assume that $u = 2$, then $g=x_4 \cdot \sqrt{x_{e_3} / \gcd (x_{e_3},x_3 x_5 \cdots x_t)}$. Since $g$ is minimal, there is $j\in\supp(e_3)\cap \{3,5,\cdots, t\}$. Therefore, $g=x_4 \cdot (x_{e_3}  /x_j)$. Furthermore, $(x_{e_3} / x_j) x^\a\in I^3$ i.e. $x_{e_3} / x_j\in \sqrt{I^3:x^\a}$, which is a contradiction to the minimality of $g$. 
\end{proof}

We are now ready for the main result of the section.

\begin{thm}\label{Main2} Let $G$ be a simple graph and $I$ its edge ideal. Then 
$$\reg I^{(3)} = \reg I^3.$$
\end{thm}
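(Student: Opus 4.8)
The plan is to combine the two inequalities that together give the equality. One direction, $\reg I^{(3)} \le \reg I^3$, is already Theorem \ref{thirdpower_in}, so the theorem reduces entirely to proving $\reg(S/I^3) \le \reg(S/I^{(3)})$, and for this the reduction lemmas \ref{red1}--\ref{red4} are exactly what is needed: the argument will be a short bookkeeping step that dispatches every possible shape of an extremal exponent to one of them.

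Concretely, I would argue by induction on $n = |V(G)|$. First I would reduce to the case that $G$ is connected with no isolated vertices, using \cite{HNTT, NV2}: splitting off a connected component or adding an isolated vertex affects neither $\reg(S/I^3)$ nor $\reg(S/I^{(3)})$. The base cases $n \le 3$ are immediate, since there $I$ is $xy$, $(xy, yz)$, or $(xy, yz, zx)$, for which $I^{(3)} = I^3$. For $n \ge 4$, I would fix an extremal exponent $(\a, i)$ of $I^3$; then $x^\a \notin I^3$, $\reg(S/I^3) = |\a| + i$, and $\h_{i-1}(\lk_{\Delta_\a(I^3)} F; K) \ne 0$ for some face $F$ of $\Delta_\a(I^3)$ with $F \cap \supp \a = \emptyset$. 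The expansion $I^{(3)} = I^3 + I J_1(G) + J_2(G)$ of Theorem \ref{expansion_third} now splits the analysis into three mutually exhaustive cases. If $x^\a \notin I^{(3)}$ and $\Delta_\a(I^{(3)}) = \Delta_\a(I^3)$, then Lemma \ref{extremal_red} yields $\reg I^3 \le \reg I^{(3)}$ at once. If $x^\a \notin I^{(3)}$ but $\Delta_\a(I^{(3)}) \ne \Delta_\a(I^3)$, this is precisely the hypothesis of Lemma \ref{red4}. If $x^\a \in I^{(3)}$, then, since $x^\a \notin I^3$, the monomial $x^\a$ is divisible by a minimal generator of $I J_1(G) + J_2(G)$ --- that is, by the product of the variables of an edge and a triangle, or by $x_C$ with $C$ a $5$-cycle, or by $x_C$ with $C$ a clique of size $4$ --- and these three subcases are settled by Lemma \ref{red2}, Lemma \ref{red1}, and Lemma \ref{red3}, respectively. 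In every branch one gets $\reg I^3 \le \reg I^{(3)}$, and the induction closes.

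The real difficulty, of course, lies not in this assembly but in the four reduction lemmas, above all Lemma \ref{red3} and Lemma \ref{red4}. There the main obstacle is the explicit computation of $\sqrt{I^3 : x^\a}$ for each shape of $\a$: one must identify it as an intersection of Stanley--Reisner ideals built from neighbourhoods $N(i)$, $N(\{1,2,3\})$, $N(\{1,2,3,4\})$ of the relevant cliques, and then run nested Mayer--Vietoris arguments in which each summand either is a cone over a vertex of $\supp \a$ (contradicting $F \cap \supp \a = \emptyset$), or forces a strictly larger exponent $\b$ with $\Delta_\b(I^3) = \Delta_\a(I^3)$ (contradicting maximality of $|\a| + i$), or reduces to an exponent of the already-treated ``edge times triangle'' form. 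Once those lemmas are in place, the theorem follows immediately from the case analysis sketched above.
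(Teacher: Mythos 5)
Your proposal is correct and follows essentially the same route as the paper: one inequality is Theorem \ref{thirdpower_in}, and the reverse inequality is obtained by fixing an extremal exponent $(\a,i)$ of $I^3$ and dispatching the three cases (equal degree complexes via Lemma \ref{extremal_red}, unequal via Lemma \ref{red4}, and $x^\a \in I^{(3)}$ via Theorem \ref{expansion_third} together with Lemmas \ref{red1}, \ref{red2}, \ref{red3}), with the induction on $|V(G)|$ and reduction to connected graphs set up exactly as in the paper. Your assessment that the substantive work lies in the reduction lemmas, not in this assembly, also matches the paper's structure.
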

\begin{proof} By Theorem \ref{thirdpower_in}, it suffices to prove that $\reg I^3 \le \reg I^{(3)}$. Let $(\a,i)$ be an extremal exponent of $I^3$. Then $x^\a \notin I^3$. We have the following cases.

\noindent\textbf{Case 1:} $x^\a \notin I^{(3)}$ and $\Delta_\a(I^3) = \Delta_\a(I^{(3)}$. The conclusion follows from Lemma \ref{extremal_red}.

\noindent\textbf{Case 2:} $x^\a \notin I^{(3)}$ and $\Delta_\a(I^3) \neq \Delta_\a(I^{(3)}$. The conclusion follows from Lemma \ref{red4}.

\noindent\textbf{Case 3:} $x^\a \in I^{(3)}$. The conclusion follows from Theorem \ref{expansion_third}, and Lemmas \ref{red1}, \ref{red2}, and \ref{red3}.
\end{proof}

\section{Applications}
In this section, we give some applications of our main results. First, we establish Conjecture B for $s = 2$ and $s = 3$, which extends work of Banerjee and Nevo \cite{BN}.

\begin{thm}\label{Main3} Let $G$ be a simple graph and $I$ its edge ideal. Then 
$$\reg I^s \le \reg I +  2s -2$$
for $s = 2, 3.$
\end{thm}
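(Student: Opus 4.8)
The plan is to deduce the bound from Theorem \ref{main_thm} by transporting known estimates for small symbolic powers to ordinary powers. For $s = 2$ the inequality $\reg I^{2} \le \reg I + 2$ is exactly the theorem of Banerjee and Nevo \cite{BN}, so strictly speaking nothing new is needed there (it also falls out of the uniform argument below). The substance is the case $s = 3$.

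For $s = 3$ I would argue as follows. By Theorem \ref{Main2} we have $\reg I^{3} = \reg I^{(3)}$, and by the recent result of Fakhari \cite[Theorem 3.6]{F4} one gets $\reg I^{(3)} \le \reg I + 4$. Chaining these,
$$\reg I^{3} = \reg I^{(3)} \le \reg I + 4 = \reg I + 2\cdot 3 - 2,$$
as claimed. Uniformly: for $s = 2, 3$ one has $\reg I^{s} = \reg I^{(s)}$ by Theorem \ref{main_thm}, together with $\reg I^{(s)} \le \reg I + 2s - 2$ from \cite{BN} (for $s=2$) and \cite[Theorem 3.6]{F4} (for $s=3$), whence the stated bound. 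In particular this recovers \cite{BN} for $s = 2$ with no topological input, the only ingredient being the equality $\reg I^{2} = \reg I^{(2)}$ of Theorem \ref{Main1}.

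There is no serious obstacle left in this last step: the difficulty of the result resides entirely in Theorem \ref{main_thm}, whose proof occupies Sections \ref{sec_2} and \ref{sec_3}. The point worth stressing is conceptual rather than technical — bounding $\reg I^{s}$ against $\reg I$ directly is hard and was previously known only for $s = 2$, whereas the corresponding estimate for symbolic powers is reachable through the explicit expansion formulas (Theorems \ref{expansion_second} and \ref{expansion_third}) and the inductive machinery of \cite{F4}, and Theorem \ref{main_thm} is exactly the bridge carrying the symbolic bound across. Looking ahead, the same scheme would yield the bound for $s = 4$ once one knows $\reg I^{4} = \reg I^{(4)}$ — the symbolic estimate $\reg I^{(4)} \le \reg I + 6$ being available from the later theorem extending \cite[Corollary 3.9]{F4} — but, as discussed in the introduction, that equality is currently out of reach because $I^{(4)}$ has no known explicit description and the number of critical exponents grows.
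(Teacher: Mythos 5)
Your argument is essentially the paper's own proof: combine the equality $\reg I^{s} = \reg I^{(s)}$ of Theorem \ref{main_thm} with Fakhari's bound from \cite[Theorem 3.6]{F4}. One point should be stated precisely: \cite[Theorem 3.6]{F4} gives $\reg I^{(s)} \le \max\{\reg(I^{(s)}+I^{s-1}),\ \reg I + 2s-2\}$, not the clean bound itself, so for $s=3$ you must add that $I^{(3)} \subseteq I^{2}$, hence $\reg(I^{(3)}+I^{2}) = \reg I^{2} \le \reg I + 2 \le \reg I + 4$ by the $s=2$ case — exactly the bootstrapping the paper performs, and the reason its $s=2$ case (which needs no input from \cite{BN}) is established first. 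With that small step made explicit, your proof is correct and coincides with the paper's.
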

\begin{proof}
By \cite[Theorem 3.6]{F4}, we have
$$\reg I^{(s)} \le \max \{ \reg (I^{(s)} + I^{s-1}), \reg I + 2s - 2\}.$$
For $s = 2, 3$, note that $I^{(s)} \subseteq I^{s-1}$. Thus, for $s = 2$, by Theorem \ref{Main1}, 
$$\reg I^2 = \reg I^{(2)} \le \max \{ \reg I, \reg I + 2\} = \reg I+ 2.$$
For $s = 3$, by Theorem \ref{Main2}, 
$$\reg I^{3} = \reg I^{(3)} \le \max \{ \reg I^2, \reg I + 4\} = \reg I + 4.$$
This completes our proof.
\end{proof}

\begin{rem} Theorem \ref{Main3} shows that bounds for edge ideals generalize to bounds for second and third powers of edge ideals. For example, combinatorial bound given by Woodroofe \cite{W} carries over to bounds for regularity of the second/third powers of an edge ideal.
\end{rem}

For symbolic powers, we prove

\begin{thm}\label{Main4} Let $G$ be a simple graph and $I$ its edge ideal. Then 
$$\reg I^{(s)} \le \reg I + 2s - 2$$
for $s = 2,3,4.$
\end{thm}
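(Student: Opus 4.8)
The plan is to mimic the proof of Theorem \ref{Main3}, using the same recent result of Fakhari but now in the form stated for symbolic powers, namely \cite[Corollary 3.9]{F4}, which gives $\reg I^{(s)} \le \max\{\reg(I^{(s)} + I^{s-1}), \reg I + 2s - 2\}$. The cases $s = 2$ and $s = 3$ are immediate from Theorems \ref{Main1} and \ref{Main2}: since $I^{(s)} \subseteq I^{s-1}$ for $s = 2, 3$, we have $I^{(s)} + I^{s-1} = I^{s-1}$, so $\reg(I^{(2)}) \le \max\{\reg I, \reg I + 2\} = \reg I + 2$ and $\reg(I^{(3)}) \le \max\{\reg I^2, \reg I + 4\} = \reg I + 4$, where we use $\reg I^2 \le \reg I + 2$ from Theorem \ref{Main3}.

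The genuinely new case is $s = 4$. Here $I^{(4)} \not\subseteq I^3$ in general, so the term $\reg(I^{(4)} + I^3)$ must be controlled directly. First I would observe that $I^{(4)} + I^3 = I^3 + J$, where $J$ is generated by the "extra" symbolic generators of degree $\le 5$ relative to $I^3$; by the expansion philosophy behind Theorems \ref{expansion_second} and \ref{expansion_third}, these come from triangles and small cliques/cycles. More precisely, I expect $I^{(4)} + I^3$ to be expressible in terms of $I^3$, products like $I^2 J_1(G)$, $I J_2(G)$, and a fourth-layer term built from triangles and $4$-cliques (for instance $J_1(G)^2$ and the square of a triangle against cliques). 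The key point is that each generator of $I^{(4)} + I^3$ not already in $I^3$ is divisible by a product of at most three edges of $G$ together with possibly one vertex variable, so that $I^{(4)} + I^3 \subseteq I^3 : (\text{a single variable})$ modulo lower-regularity pieces — or, more robustly, that $\reg(I^{(4)} + I^3) \le \reg I + 6 = \reg I + 2\cdot 4 - 2$, which then closes the induction via Fakhari's inequality together with $\reg I^3 \le \reg I + 4$.

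The main obstacle I anticipate is precisely bounding $\reg(I^{(4)} + I^3)$. I would try to reduce it to already-established bounds by the same degree-complex technology used throughout Sections 3 and 4: take an extremal exponent $(\a, i)$ of $I^{(4)} + I^3$, use Lemma \ref{Key1} and Lemma \ref{radical_colon} to analyze $\sqrt{(I^{(4)} + I^3) : x^\a}$, and show that either $\Delta_\a(I^{(4)} + I^3) = \Delta_\a(I^{(3)})$ (giving $|\a| + i \le \reg I^{(3)} \le \reg I + 4 < \reg I + 6$), or there is a variable $x_t$ with $t \notin \supp\a$ in the radical colon so that Lemma \ref{red0} allows an inductive step on $|V(G)|$, or $x^\a$ is divisible by a product of a small clique/cycle structure and the exponent is so large that $|\a| + i$ is forced below $\reg I + 6$. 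An alternative, cleaner route would be to invoke \cite[Corollary 3.9]{F4} once more, noting that $I^{(4)} + I^3$ and $I^{(4)} + I^2$ fit into a telescoping chain $I^{(4)} + I^3 \subseteq$ something whose regularity is governed by $\reg I^{(3)}$ and $\reg I^{(2)}$; combined with Theorems \ref{Main1} and \ref{Main2} this would give $\reg(I^{(4)} + I^3) \le \reg I + 4$, and then $\reg I^{(4)} \le \max\{\reg I + 4, \reg I + 6\} = \reg I + 6$ as required. Either way the numerical bookkeeping is routine once the structural claim about generators of $I^{(4)} + I^3$ is in place; that structural claim, analogous to Theorem \ref{expansion_third} but for the fourth symbolic power intersected with $I^3$, is the step I expect to require the most care.
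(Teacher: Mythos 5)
Your framework is the same as the paper's (Fakhari's inequality reduces everything to bounding $\reg(I^{(4)}+I^3)$, and the cases $s=2,3$ follow from Theorems \ref{Main1}, \ref{Main2}, \ref{Main3} exactly as you say), but for $s=4$ the proposal stops short of the two steps that constitute the actual proof, and in places guesses them incorrectly. First, the structural claim: the paper proves (Lemma \ref{fourth_third}) that $I^{(4)}+I^3 = I^3 + J_1(G)J_1(G) + J_3(G)$, where $J_3(G)$ is generated by the $5$-cliques of $G$. This is not routine bookkeeping: after reducing to graphs of matching number at most $2$ and disposing of an induced $5$-cycle by hand, the paper invokes the fact that a graph with no $5$-cycle and matching number at most $2$ is perfect, and then uses Sullivant's description of symbolic powers of edge ideals of perfect graphs. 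Your guessed decomposition (terms like $I^2J_1(G)$, $IJ_2(G)$, ``triangles and $4$-cliques'') misses the $5$-clique generators entirely, and those are precisely one of the two new types of generators that must be analyzed; your heuristic that every extra generator is ``a product of at most three edges together with possibly one vertex variable'' would not survive the $5$-clique case as stated.

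Second, the regularity bound for $H = I^{(4)}+I^3$: the paper shows directly that $\Delta_\a(H)=\Delta_\a(I^3)$ for \emph{every} $\a$ with $x^\a\notin H$, by checking via Lemma \ref{radical_colon} that the radical colon contributions of a $5$-clique and of a product of two triangles already lie in $\sqrt{I^3:x^\a}$; Lemma \ref{extremal_red} then gives $\reg H\le\reg I^3\le\reg I+4$, with no induction on $|V(G)|$, no appeal to Lemma \ref{red0}, and no trichotomy of cases. Your sketch replaces this finite verification with a hoped-for case analysis (equality of degree complexes with $\Delta_\a(I^{(3)})$, or a variable outside $\supp\a$, or a size estimate on $|\a|+i$) that is never carried out, and your ``cleaner alternative'' via a telescoping chain and a second application of \cite[Corollary 3.9]{F4} is not substantiated: nothing in Theorems \ref{Main1} and \ref{Main2} by themselves yields $\reg(I^{(4)}+I^3)\le\reg I+4$ without the explicit description of $I^{(4)}+I^3$ and the colon computation above. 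So the plan points in the right direction, but the expansion lemma and the degree-complex comparison --- the mathematical content of the $s=4$ case --- are missing.
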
 
The case $s \le 3$ was already proved in Theorem \ref{Main3}. Let $s = 4$. Using \cite[Theorem 3.6]{F4}, we need to bound $\reg (I^{(4)} + I^3).$  First, we have

\begin{lem}\label{fourth_third} Denote $J_3(G)$ the ideal of $S$ generated by $x_C$ where $C$ is a clique of size $5$ in $G$. Then $$I^{(4)} + I^3 = I^3 +  J_1(G) J_1(G) + J_3(G).$$ 
\end{lem}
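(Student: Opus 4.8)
The plan is to prove the equality of monomial ideals $I^{(4)} + I^3 = I^3 + J_1(G)J_1(G) + J_3(G)$ by the standard two-way inclusion argument, working with monomial generators and the $*$-differential criterion of Lemma \ref{differential_criterion}. The easy direction is $\supseteq$: one checks that each generator $x_ix_jx_rx_sx_t$ of $J_3(G)$ (a clique of size $5$) lies in $I^{(4)}$, and that each product $(x_ix_jx_r)(x_ux_vx_w)$ of two triangle-generators lies in $I^{(4)}$; since all of these obviously also need to be shown to lie in $I^{(4)}+I^3$, and $I^3$ contributes the remaining part, this is a routine verification using that the derivative-of-order-$3$ of such a monomial always lands in $I$. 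For the clique of size $5$, deleting any $3$ of the $5$ variables leaves a monomial still divisible by an edge of the clique; for a product of two triangles one argues similarly by cases on how the $3$ derivative-variables distribute among the (at most $6$) vertices involved.

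For the harder inclusion $\subseteq$, I would take a monomial generator $f \in I^{(4)} + I^3$; if $f \in I^3$ there is nothing to prove, so assume $f \in I^{(4)} \setminus I^3$. By Corollary \ref{restriction} I may assume $\supp f = V(G)$. The goal is to show $f \in I^3 + J_1(G)J_1(G) + J_3(G)$. I would mirror the structure of the proof of Theorem \ref{expansion_third}: first dispose of the cases where $G$ contains a large configuration. If $G$ has a clique of size $5$, then $f \in J_3(G)$. If the matching number of $G$ is at least $3$, then $f \in I^3$. If $G$ is bipartite then $I^{(4)} = I^4 \subseteq I^3$. So one reduces to graphs with matching number $\le 2$, containing an odd cycle but no $K_5$. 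Using $f \in I^{(4)}$ and the criterion $\partial^* f/\partial^* x^\a \in I$ for all $|\a| = 3$, one extracts local edge/triangle structure around the support of $f$ exactly as in the $s=3$ proof, and pushes through a (somewhat longer) case analysis to conclude $f$ is divisible either by a product of two triangles or by the monomial of a $5$-clique, or else lies in $I^3$.

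The main obstacle I expect is precisely the bookkeeping in this last case analysis: with matching number $2$ and a triangle $123$ present, one must analyze the exponents $\alpha_1,\alpha_2,\alpha_3$ and the way the remaining variables of $\supp f$ attach to $\{1,2,3\}$, using third-order derivatives to force the existence of a second triangle or a $K_4$/$K_5$ configuration. Unlike the $s=3$ situation, here one needs \emph{two} triangles (hence the term $J_1(G)J_1(G)$), so the combinatorial configurations to rule out are more numerous, and one has to be careful that when only one triangle is available the monomial genuinely lands in $I^3$ (which is where the matching-number $\le 2$ hypothesis is used decisively). I would organize this by first showing that if $f \notin I^3$ then every edge-derivative $\partial^* f/\partial^*(x_ax_b)$ with $ab \in E(G)$ still has the property that further dividing by any vertex keeps it in $I$, and then translating these divisibility constraints into the assertion that the ``residual'' monomial $f/(x_1x_2x_3)$ (after removing one triangle) again forces a triangle or a clique of size $4$ adjacent to $\{1,2,3\}$, at which point one is done.

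Finally, a remark on strategy: rather than a fully self-contained combinatorial argument, it may be cleaner to derive this lemma from Theorem \ref{expansion_third} together with a direct computation of $I^{(4)}$ restricted to the relevant small graphs, since $I^{(4)} + I^3$ only sees generators of $I^{(4)}$ of degree $\le 2\cdot 3 = 6$ beyond $I^3$ — i.e. $f$ with at most a bounded number of ``extra'' variables — so the classification of minimal such configurations is finite and matches exactly $\{$two triangles, one $K_5\}$. Either way the content is the same finite case check; I expect the write-up to be the longest routine verification in the paper, but with no conceptual difficulty beyond what already appears in the proof of Theorem \ref{expansion_third}.
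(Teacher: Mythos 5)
Your proposal gets the easy inclusion and the first reductions right (full support via Corollary \ref{restriction}, matching number $\ge 3$ forces $f\in I^3$, a $5$-clique in $G$ forces $x_C\mid f$, the bipartite case is trivial), and these coincide with the paper's opening moves. But the heart of the lemma --- showing that for a graph with matching number at most $2$ every minimal generator of $I^{(4)}$ with full support lies in $I^3+J_1(G)J_1(G)+J_3(G)$ --- is not actually carried out: you announce a ``somewhat longer case analysis'' and a reorganization of it, but no argument is given, and this is precisely the nontrivial content. The paper avoids that case analysis entirely by two specific steps you are missing: (i) if $G$ contains an induced $5$-cycle $12345$, then writing $f=x_1x_2x_3x_4x_5g$ and applying Lemma \ref{differential_criterion} to $\partial^* f/\partial^*(x_2x_4x_5)=x_1x_3g\in I$ (with $x_1x_3\notin I$ since the cycle is induced) yields $x_1g\in I$ or $x_3g\in I$, hence $f=(x_1g)(x_2x_3)(x_4x_5)\in I^3$; and (ii) if $G$ has no $5$-cycle and matching number at most $2$, then $G$ is perfect (the paper cites Diestel), and Sullivant's description of symbolic powers of edge ideals of perfect graphs finishes the proof. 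Without an input of this kind your reduction stalls exactly where the difficulty begins.

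Moreover, the fallback you suggest --- that ``$I^{(4)}+I^3$ only sees generators of $I^{(4)}$ of degree $\le 6$ beyond $I^3$,'' so the classification is a finite check --- is an unproved assertion, not a shortcut. A graph with matching number $2$ and no $K_5$ can have arbitrarily many vertices, and a minimal generator of $I^{(4)}$ has full support on all of them, possibly with exponents; bounding the ``extra'' part of such a generator modulo $I^3$ is essentially equivalent to the structural statement you are trying to prove (and is what perfection plus Sullivant's theorem delivers). So as written the proposal is a plausible plan whose decisive step is either missing or replaced by an unjustified finiteness claim; to be a proof it would need either the perfect-graph route of the paper or a genuinely executed combinatorial analysis in the matching-number-$\le 2$, no-$5$-cycle case.
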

\begin{proof} Using Lemma \ref{differential_criterion}, it is easy to see that the left hand side contains the right hand side. Conversely, let $f$ be a minimal generator of $I^{(4)}$. By Lemma \ref{restriction}, we may assume that $\supp f = V(G) = [n]$, and $G$ has no isolated vertices. If the matching number of $G$ is at least $3$, then $f \in I^3$. Thus, we may assume that the matching number of $G$ is at most $2$. There are two cases:

\smallskip

\noindent\textbf{Case 1:} $G$ contains an induced $5$-cycle, called $12345$. Write $f = x_1x_2x_3x_4x_5 g$. By Lemma \ref{differential_criterion}, this implies that $x_1 x_3g \in I$. Since $12345$ is an induced $5$-cycle, $x_1x_3 \notin I$, thus either $x_1g$ or $x_3g\in I$. We may assume that $x_1g \in I$. This implies that $f = (x_1g) (x_2x_3)(x_4x_5) \in I^3$. 

\smallskip

\noindent\textbf{Case 2:} $G$ does not contains any $5$-cycle. Since the matching number of $G$ is at most $2$, by \cite[Theorem 5.5.3]{D}, $G$ is a perfect graph. The conclusion follows from \cite[Theorem 3.10]{Su}.
\end{proof}

We are now ready for

\begin{proof}[Proof of Theorem \ref{Main4}] By Theorem \ref{Main3}, we may assume that $s = 4$. By \cite[Theorem 3.6]{F4}, we have 
$$\reg I^{(4)} \le \max \{ \reg (I^{(4)} + I^3), \reg I + 6\}.$$
Let $H = I^{(4)} + I^3$. By Lemma \ref{fourth_third}, $H = I^3 + J_1(G) J_1(G) + J_3(G)$. Fix $\a \in \NN^n$ such that $x^\a \notin H$. We will prove that $\Delta_\a(H) = \Delta_\a(I^3)$. Assume by contradiction that $\Delta_\a(H)\neq \Delta_\a(I^3)$. By Lemma \ref{Key1}, there exists a minimal squarefree monomial generator $f$ of $\sqrt{H^3:x^\a}$ such that $f \notin \sqrt{I^3:x^\a}$. By Lemma \ref{radical_colon}, we have two cases:

\smallskip
\noindent\textbf{Case 1:} $f = \sqrt{x_1x_2x_3x_4x_5/\gcd(x_1x_2x_3x_4x_5,x^\a)}$, where $12345$ forms a clique of size $5$ in $G$. If $\deg f \ge 2$, then $x_ix_j| f$ and $f \in I \subseteq \sqrt{I^3:x^\a}$, which is a contradiction. If $\deg f = 1$, say $f = x_1$, then $x_2x_3x_4x_5 | x^\a$. But then $x_1 \in \sqrt{I^3:x^\a}$, which is also a contradiction.

\smallskip
\noindent\textbf{Case 2:} $f = \sqrt{x_1x_2x_3x_4x_5x_6/\gcd(x_1\cdots x_6,x^\a)}$ where $123$ and $456$ are triangles in $G$. If $\{1,2,3\} \cap N(\{4,5,6\}) \neq \emptyset$, then $x_1\cdots x_6 \in I^3$, which is a contradiction. Thus, we may assume that $\{1,2,3\} \cap N(\{4,5,6\}) = \emptyset.$ Since $f \notin \sqrt{I^3:x^\a}$, $f\notin I$. Since $f | x_1\cdots x_6$, $\deg f \le 2$. We may assume that $\supp f \subseteq \{1,4\}$. This implies that $x_2x_3x_5x_6 |x^\a$. But, $x_1^2 (x_2x_3x_5x_6) = (x_1x_2)(x_1x_3)(x_5x_6) \in I^3$ and $x_4^2(x_2x_3x_5x_6) = (x_2x_3)(x_4x_5)(x_4x_6) \in I^3$. In other words, $x_1, x_4 \in \sqrt{I^3:x^\a}$. This implies that $f \in \sqrt{I^3:x^\a}$, which is a contradiction.

Thus, we have $\Delta_\a(H) = \Delta_\a(I^3)$ for all $\a \in \NN^n$ such that $x^\a \notin H$. By Lemma \ref{extremal_red}, $\reg H \le \reg I^3 \le \reg I + 4$, where the last inequality follows from Theorem \ref{Main3}. This concludes our proof.
\end{proof}

\begin{rem} Theorem \ref{Main4} implies that bounds for regularity of edge ideals generalize to bounds for regularity of $s$-th symbolic powers of edge ideals ($s \le 4$). In particular, upper bounds given by Herzog and Hibi \cite{HH}, and Fakhari and Yassemi \cite{FY} hold for $s$-th symbolic powers of edge ideals ($s \le 4$).
\end{rem}

By Theorem \ref{main_thm} and Theorem \ref{Main4}, we obtain a formula of the regularity of small symbolic powers of edge ideals of some new classes of graphs.

\begin{cor}Let $G$ be a simple graph and $I=I(G)$. 

\begin{enumerate}
    \item If $I^s$ has a linear resolution (for $s=2$ or $3$) 
    then $\reg(I^{(s)})= 2s$.
    \item If $\reg I = \mu(G) + 1$, where $\mu(G)$ is the induced matching number of $G$, then $\reg I^{(s)} = 2s + \mu(G) -1$ for $s = 2,3,4.$
\end{enumerate}
\end{cor}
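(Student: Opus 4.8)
The plan is to derive both statements directly from the already-established regularity equalities $\reg I^{(s)} = \reg I^s$ (Theorem \ref{main_thm}, for $s = 2,3$) together with Theorem \ref{Main4} (for $s = 2,3,4$), so that the only work left is to identify, in each of the two listed situations, the exact numerical value of $\reg I$ and then check that the known upper bounds are actually attained.

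For part (1): suppose $I^s$ has a linear resolution for $s = 2$ or $s = 3$. A power $I^s$ of the edge ideal $I$ is generated in degree $2s$, so having a linear resolution means $\reg I^s = 2s$. By Theorem \ref{main_thm}, $\reg I^{(s)} = \reg I^s = 2s$, which is exactly the claimed formula. So part (1) is essentially immediate once one records that ``linear resolution'' for an ideal generated in degree $2s$ forces regularity $2s$.

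For part (2): assume $\reg I = \mu(G) + 1$. The general lower bound $\reg I^{(s)} \ge 2(s-1) + \reg I$ holds for edge ideals (this is the symbolic-power analogue of the classical lower bound $\reg I^s \ge 2s - 2 + \reg I$; for $s = 2, 3$ it also follows from Theorem \ref{main_thm} combined with the known bound $\reg I^s \ge 2s - 2 + \reg I$, and for $s = 4$ one uses the fact that $\reg I^{(s)} \ge \reg I^{(s-1)} + 2$ via the containment $I^{(s)} \subseteq I^{(s-1)}$ together with a colon/degree-complex argument, or cites the relevant lower bound in the literature on symbolic powers of edge ideals). Substituting $\reg I = \mu(G) + 1$ gives $\reg I^{(s)} \ge 2s - 2 + \mu(G) + 1 = 2s + \mu(G) - 1$. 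For the matching upper bound, apply Theorem \ref{Main4}: for $s = 2, 3, 4$ we have $\reg I^{(s)} \le \reg I + 2s - 2 = \mu(G) + 1 + 2s - 2 = 2s + \mu(G) - 1$. Combining the two inequalities yields the stated equality.

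The main obstacle is supplying the lower bound $\reg I^{(s)} \ge 2s + \mu(G) - 1$ cleanly for all three values $s = 2, 3, 4$. For $s = 2, 3$ one can route everything through Theorem \ref{main_thm} and the well-known lower bound $\reg I^s \ge 2s - 2 + \reg I$ for powers of any graded ideal generated in degree $2$; for $s = 4$ this route is unavailable since $\reg I^{(4)}$ need not equal $\reg I^4$, so one must instead invoke (or reprove via the degree-complex machinery of Section 2, picking an extremal exponent for $I$ and padding it appropriately) the monotonicity-type bound $\reg I^{(s)} \ge \reg I^{(s-1)} + 2$, which chained from $s = 1$ gives $\reg I^{(s)} \ge \reg I + 2(s-1)$. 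Everything else is bookkeeping with the formulas already proved.
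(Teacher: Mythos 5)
Part (1) and the upper bound in part (2) are fine and agree with the paper: linear resolution of $I^s$ forces $\reg I^s = 2s$ since $I^s$ is generated in degree $2s$, and then Theorem \ref{main_thm} gives $\reg I^{(s)} = 2s$; likewise Theorem \ref{Main4} gives $\reg I^{(s)} \le \reg I + 2s-2 = 2s + \mu(G) - 1$ under the hypothesis of (2).

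The gap is in your lower bound for part (2). For $s=2,3$ you invoke ``the well-known lower bound $\reg I^s \ge 2s-2+\reg I$ for powers of any graded ideal generated in degree $2$''; no such general theorem exists (it is precisely a lower-bound companion to Conjecture 1 of \cite{BBH}, not a known fact). What is known is the induced-matching bound $\reg I(G)^s \ge 2s + \mu(G) - 1$ of Beyarslan--H\`a--Trung, which happens to coincide with what you need only because the hypothesis is $\reg I = \mu(G)+1$; you should cite that bound, not the unproved one. More seriously, for $s=4$ your route collapses: Theorem \ref{main_thm} is unavailable, the containment $I^{(4)} \subseteq I^{(3)}$ by itself yields no inequality between regularities in either direction, and the monotonicity $\reg I^{(s)} \ge \reg I^{(s-1)} + 2$ for symbolic powers of edge ideals is not a result you can cite nor one that follows from a routine colon argument. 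The paper closes this uniformly (for all $s$, in particular $s=4$) with Corollary \ref{restriction_inq}: let $V$ be the vertex set of an induced matching of size $\mu = \mu(G)$; then $I_V$ is the edge ideal of $\mu$ pairwise disjoint edges, a bipartite (indeed complete intersection) edge ideal, so $I_V^{(s)} = I_V^s$ and $\reg I_V^s = 2s + \mu - 1$, whence $\reg I^{(s)} \ge \reg I_V^{(s)} = 2s + \mu(G) - 1$. Replacing your lower-bound argument by this restriction argument repairs the proof; as written, the $s=4$ case is unproved.
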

\begin{proof} The first statement is a direct consequence of Theorem \ref{main_thm}. The second statement follows from Theorem \ref{Main4} and Lemma \ref{restriction_inq}.
\end{proof}

\begin{rem} \begin{enumerate}
    \item There is an infinite family of graphs $G$ with the property that although each edge ideal $I(G)$ does not have a linear resolution, a higher power does (see \cite{N}).
    \item There are classes of graphs for which $\reg I = \mu(G) + 1$, while regularity of their symbolic powers was not known. Such examples include the class of very-well covered graphs \cite{JS} and weakly chordal graphs \cite{NV1}.
\end{enumerate} 
\end{rem}

We end the paper with the following remark.
\begin{rem} From the proof of Theorem \ref{Main4}, we see that 
$$\reg (I^3 + J_1(G) J_1(G))  \le \reg  I^3.$$
Note that $I^3 + J_1(G)J_1(G) = \overline{I^3}$ is the integral closure of $I^3$. The regularity of integral closure of powers of edge ideals will be studied in detail in subsequent work.
\end{rem}
\subsection*{Acknowledgment} This paper was done while the first author was visiting Vietnam Institute for Advanced Study in Mathematics (VIASM).  He would like to thank the VIASM for hospitality and financial support. The first author also is partially supported by the project B2022-SPH-02 of Vietnam Ministry of Education and Training. The fourth author  was funded by Vingroup Joint Stock Company and supported by the Domestic Master/ PhD Scholarship Programme of Vingroup Innovation Foundation (VINIF), Vingroup Big Data Institute (VINBIGDATA).

\end{document}